\newtheorem{Lm}[subsubsection]{Lemma}
\newtheorem{Pp}[subsubsection]{Proposition}
\newtheorem{Con}[subsubsection]{Conjecture}
\newtheorem{Thm}[subsubsection]{Theorem}
\newtheorem{Def}[subsubsection]{Definition}
\newtheorem{Rem}[subsubsection]{Remark}
\theoremstyle{definition}
\theoremstyle{remark}
\newcommand{\nc}{\newcommand}
\nc{\renc}{\renewcommand}
\nc{\ssec}{\subsection}
\nc{\sssec}{\subsubsection}
\nc{\on}{\operatorname}
\nc\ol{\overline}
\nc\wt{\widetilde}
\nc\tboxtimes{\wt{\boxtimes}}
\newcommand{\cA}{{\mathcal A}}
\newcommand{\cD}{{\mathcal D}}
\newcommand{\cH}{{\mathcal H}}
\newcommand{\cE}{{\mathcal E}}
\newcommand{\cG}{{\mathcal G}}
\newcommand{\cI}{{\mathcal I}}
\newcommand{\cO}{{\mathcal O}}
\newcommand{\cL}{{\mathcal L}}
\newcommand{\cN}{{\mathcal N}}
\newcommand{\cF}{{\mathcal F}}
\newcommand{\cK}{{\mathcal K}}
\newcommand{\cP}{{\mathcal P}}
\newcommand{\cS}{{\mathcal S}}
\newcommand{\cT}{{\mathcal T}}
\newcommand{\cU}{{\mathcal U}}
\newcommand{\cV}{{\mathcal V}}
\newcommand{\cW}{{\mathcal W}}
\newcommand{\cX}{{\mathcal X}}
\newcommand{\cY}{{\mathcal Y}}
\newcommand{\cZ}{{\mathcal Z}}
\renewcommand{\AA}{{\mathbb A}}
\newcommand{\ZZ}{{\mathbb Z}}
\newcommand{\PP}{{\mathbb P}}
\newcommand{\OO}{{\mathbb O}}
\nc{\gi}{\mathfrak{i}}
\nc{\ga}{\mathfrak{a}}
\nc{\gb}{\mathfrak{b}}
\renewcommand{\gg}{\mathfrak{g}}  
\newcommand{\gm}{\mathfrak{m}}    
\newcommand{\gp}{\mathfrak{p}}
\newcommand{\gq}{\mathfrak{q}}
\newcommand{\gu}{\mathfrak{u}}
\newcommand{\gv}{\mathfrak{v}}
\nc{\gL}{\mathfrak{L}}
\nc{\gM}{\mathfrak{M}}
\nc{\gV}{\mathfrak{V}}
\nc{\gE}{\mathfrak{E}}
\nc{\gJ}{\mathfrak{J}}
\nc{\gF}{\mathfrak{F}}
\nc{\gN}{\mathfrak{N}}
\nc{\gsl}{\mathfrak{sl}} 
\nc{\bA}{\mathbf{A}}
\nc{\bC}{\mathbf{C}}
\nc{\uZ}{\underline{\cZ}}
\nc{\MAPS}{{\mathcal Maps}}
\newcommand{\Rep}{{\on{Rep}}}
\newcommand{\Gm}{\mathbb{G}_m}
\newcommand{\Ga}{\mathbb{G}_a}
\newcommand{\A}{\mathbb{A}}
\newcommand{\toup}[1]{\stackrel{#1}{\to}}
\newcommand{\hook}[1]{\stackrel{#1}{\hookrightarrow}}
\newcommand{\getsup}[1]{\stackrel{#1}{\gets}}
\newcommand{\Sp}{\on{\mathbb{S}p}}
\newcommand{\GSp}{\on{G\mathbb{S}p}}
\newcommand{\IC}{\on{IC}}
\newcommand{\Hom}{\on{Hom}}
\newcommand{\RG}{\on{R\Gamma}}
\newcommand{\Bun}{\on{Bun}}
\newcommand{\Bunb}{\on{\overline{Bun}} }
\newcommand{\Bunt}{\on{\widetilde\Bun}}
\newcommand{\Spec}{\on{Spec}}
\newcommand{\Gr}{\on{Gr}}
\newcommand{\GL}{\on{GL}}
\newcommand{\Eis}{{\on{Eis}}}
\newcommand{\pr}{\on{pr}}
\newcommand{\id}{\on{id}}
\newcommand{\QED}{$\square$} 
\newcommand{\Fp}{\mathbb{F}_p}  
\newcommand{\iso}{{\widetilde\to}}
\newcommand{\comp}{\circ}
\newcommand{\Four}{\on{Four}}
\renewcommand{\H}{{\on{H}}}   
\newcommand{\R}{\on{R}\!}   
\newcommand{\DD}{\mathbb{D}}  
\newcommand{\ov}[1]{\overline{#1}}
\newcommand{\select}[1]{{\it{#1}}}
\newcommand{\und}[1]{\underline{#1}}
\newcommand{\<}{\langle}
\renewcommand{\>}{\rangle}
\newcommand{\ev}{\mathit{ev}}
\newcommand{\Lie}{\on{Lie}}
\newcommand{\Res}{\on{Res}}
\newcommand{\act}{\on{act}}
\newcommand{\dimrel}{\on{dim.rel}}
\newcommand{\SL}{\on{SL}}
\newcommand{\Ind}{\on{Ind}}
\newcommand{\LocSys}{\on{LocSys}}
\newcommand{\ra}{\rightarrow}
\newcommand{\la}{\leftarrow}
\nc{\Perv}{\on{Perv}}
\nc{\Gra}{\on{Gra}}
\nc{\PPerv}{\on{{\PP}erv}}
\nc{\oX}{\overset{\scriptscriptstyle\circ}{X}}
\nc{\ocL}{\overset{\scriptscriptstyle\circ}{\cL}}
\nc{\gRes}{\on{gRes}}
\nc{\Sign}{\on{Sign}}
\nc{\goodat}{\rm{good\, at}}
\nc{\Whit}{\on{Whit}}
\nc{\add}{\on{add}}
\nc{\FS}{\on{FS}}
\nc{\oo}[1]{\overset{\scriptscriptstyle\circ}{#1}}
\nc{\can}{\on{can}}
\nc{\summ}{\on{sum}}
\nc{\SiSu}{\on{SS}}
\nc{\Irr}{\on{Irr}}
\nc{\Hecke}{\on{Hecke}}
\nc{\oHecke}{\overset{\scriptstyle\bullet}{\Hecke}}
\nc{\ocO}{\overset{\scriptstyle\bullet}{\cO}}
\nc{\og}[1]{\overset{\scriptscriptstyle\bullet}{#1}}
\nc{\of}{\overset{\scriptstyle\bullet}{f}}
\nc{\Exp}{\on{{\mathcal E}xp}}
\nc{\Chain}{\on{Chain}}
\nc{\Map}{\on{Map}}
\nc{\cSet}{\on{{\mathcal S}et}}
\nc{\Cat}{\on{\mathcal{C}at}}
\nc{\bfitDelta}{\bm{\mathit{\Delta}}}
\nc{\Grpd}{\on{Grpd}}
\nc{\Kan}{\on{{\mathcal K}an}}
\nc{\Spc}{\on{Spc}}
\nc{\Yon}{\on{Yon}}
\nc{\colim}{\on{colim}}
\nc{\Fin}{\on{{\mathcal F}in}}
\nc{\Alg}{\on{Alg}}
\nc{\Triv}{\on{\mathcal Triv}}
\nc{\Grp}{\on{{\mathcal G}rp}}
\nc{\EM}{\on{{\mathcal EM}}}
\nc{\Surj}{\on{Surj}}
\nc{\Ass}{\on{{\mathcal A}ss}}
\nc{\Sptr}{\on{Sptr}}
\nc{\cPr}{\on{{\cP}r}}
\nc{\Grd}{\on{Grd}}
\nc{\CAT}{\on{\bf 1-Cat}}
\nc{\CATT}{\on{\bf 2-Cat}}
\nc{\DGCat}{\on{DGCat}}
\nc{\Act}{\on{Act}}
\nc{\Env}{\on{Env}}
\nc{\Quad}{\on{Quad}}
\nc{\ComGrp}{\on{ComGrp}}
\nc{\PreStk}{\on{PreStk}}
\nc{\Stk}{\on{Stk}}
\nc{\NearStk}{\on{NearStk}}
\nc{\Tot}{\on{Tot}}
\nc{\Ptd}{\on{Ptd}}
\nc{\Mon}{\on{Mon}}
\nc{\Idem}{\on{Idem}}
\nc{\ind}{\on{ind}}
\nc{\BMod}{\on{BMod}}
\nc{\Tens}{\on{Tens}}
\nc{\Step}{\on{Step}}
\nc{\MAP}{\on{\bf Map}}
\nc{\Seq}{\on{Seq}}
\nc{\boneCat}{\on{\mathbf{1-Cat}}}
\nc{\DG}{\on{DG}}
\nc{\cAb}{\on{{\cA}b}}
\nc{\QCoh}{\on{QCoh}}
\nc{\otimesshriek}{\stackrel{!}{\otimes}}
\nc{\Ran}{\on{Ran}}
\nc{\Groth}{\on{Groth}}
\nc{\coGroth}{\on{coGroth}}
\nc{\Conf}{\on{Conf}}
\nc{\Tr}{\on{Tr}}
\nc{\FactMod}{\on{FactMod}}
\nc{\e}{\mathrm e}
\nc{\oblv}{\on{oblv}}
\nc{\Fib}{\on{Fib}}
\nc{\cofib}{\on{cofib}}
\nc{\PsId}{\on{Ps-Id}}
\nc{\Vac}{\on{Vac}}
\nc{\Reg}{\on{Reg}}
\nc{\bb}[1]{\overset{\scriptscriptstyle\bullet}{#1}}
\nc{\coInd}{\on{coInd}}
\nc{\FactGe}{\on{FactGe}}
\nc{\SI}{\on{SI}}
\nc{\coind}{\on{coind}}
\nc{\Sat}{\on{Sat}}
\nc{\QLisse}{\on{QLisse}}
\nc{\IndLisse}{\on{IndLisse}}
\nc{\ucoHom}{\on{\und{coHom}}}
\nc{\RMod}{\on{RMod}}
\nc{\Lisse}{\on{Lisse}}
\nc{\Mir}{\on{Mir}}
\nc{\psu}{\on{ps-u}}
\nc{\Maps}{\on{Maps}}
\nc{\IndCoh}{\on{IndCoh}}
\nc{\DMod}{\on{\cD-Mod}}
\nc{\Nilp}{\on{Nilp}}
\nc{\Arth}{\on{Arth}}
\newcommand*{\doublerightarrow}[2]{\mathrel{
  \settowidth{\@tempdima}{$\scriptstyle#1$}
  \settowidth{\@tempdimb}{$\scriptstyle#2$}
  \ifdim\@tempdimb>\@tempdima \@tempdima=\@tempdimb\fi
  \mathop{\vcenter{
    \offinterlineskip\ialign{\hbox to\dimexpr\@tempdima+1em{##}\cr
    \rightarrowfill\cr\noalign{\kern.5ex}
    \rightarrowfill\cr}}}\limits^{\!#1}_{\!#2}}}
\newcommand*{\triplerightarrow}[1]{\mathrel{
  \settowidth{\@tempdima}{$\scriptstyle#1$}
  \mathop{\vcenter{
    \offinterlineskip\ialign{\hbox to\dimexpr\@tempdima+1em{##}\cr
    \rightarrowfill\cr\noalign{\kern.5ex}
    \rightarrowfill\cr\noalign{\kern.5ex}
    \rightarrowfill\cr}}}\limits^{\!#1}}}
\begin{document}
\title{Fourier coefficients and a filtration on $Shv(\Bun_G)$}
\author{S. Lysenko}
\address{Institut Elie Cartan Lorraine, Universit\'e de Lorraine, 
 B.P. 239, F-54506 Vandoeuvre-l\`es-Nancy Cedex, France}
\email{Sergey.Lysenko@univ-lorraine.fr}
\begin{abstract}
We define a filtration by $\DG$-subcategories on the $\DG$-category $Shv(\Bun_G)$ of sheaves on the moduli of $G$-torsors on a curve, which is stable under the action of  Hecke functors. We formulate a conjecture relating this filtration with another filtration on the spectral side of the categorical geometric Langlands conjecture. We also formulate a conjectural compatibility with the parabolic induction.
\end{abstract}
\maketitle

\section{Introduction} 
\sssec{} The idea to associate with a nilpotent orbit in $\gg$ a set of Fourier coefficients of automorphic forms on $G$ has appeared first maybe in \cite{GRS2}. More general conjectures at the level of functions were proposed in \cite{Ginz}. Further works in this direction in the theory of automorphic forms include, in particular, \cite{J1, JL, JL2, JLS}. 
A conceptual formulation appears, in particular, as (\cite{J1}, Conjectures~4.2 and 4.3).  
 
  Connections between nilpotent orbits in $\gg$ and representations of finite reductive groups also were used in preceding works of Lusztig on the classification of such representations \cite{Lus}.

\sssec{} We propose a way to fit the above ideas and, in particular, (\cite{J1}, Conjectures~4.2 and 4.3) into the global nonramified geometric Langlands program as it is formulated in \cite{AG}, \cite{AGKRRV}. Our proposal makes sense in several contexts.

 In the context of $\cD$-modules we propose some filtrations on the $\DG$-categories $\DMod(\Bun_G)$ and $\IndCoh_{\Nilp}(\LocSys(\check{G}))$, which are expected to correspond to each other under the conjectural geometric Langlands equivalence (\cite{AG}, Conjecture 1.1.6)
\begin{equation}
\label{conj_global_Langlands_for_D-mod}
\DMod(\Bun_G)\,\iso\, \IndCoh_{\Nilp}(\LocSys(\check{G}))
\end{equation}

 In the restricted constructible context we propose a similar filtrations on both sides and a similar refinement of (\cite{AGKRRV}, Conjecture 21.2.7). More precisely, in this context we propose a filtration on $Shv(\Bun_G)$ which is shown to be preserved by Hecke functors. The desired filtration on $Shv_{\Nilp}(\Bun_G)$ (also preserved by Hecke functors) is obtained by restriction from that on $Shv(\Bun_G)$.

\ssec{Conventions} 

\sssec{} Work in the constructible context over an algebraically closed field $k$ of characteristic $p\ge 0$ in the sense of (\cite{AGKRRV}, Section~1.1.1). We assume the charracteristic of $p$ \select{very good} in the sense of (\cite{AGKRRV}, Section~D.1.1), the precise definition is found in \cite{McN}. Since $k$ is algebraically closed, we systematically ignore the Tate twists. We use the notations and conventions for the constructible sheaves theory from \cite{AGKRRV}. In particular, $e$ denotes the field of coefficients of our sheaf theory. 

\sssec{} 
\label{Sect_1.1.2_on_conventions}
Let $G$ be reductive connected group over $k$, $X$ a smooth proper irreducible curve over $k$. In the case $p=0$ the nilpotent orbits in $\gg=\Lie G$ are classified in \cite{CM} (or \cite{Ra} for a short summary). Our assumption on $p$ implies that there is a $G$-invariant isomorphism $B: \gg\,\iso\,\gg^*$, which we fix. 

 We use the Fourier transform, in the case $p>0$ it is normalized `to preserve perversity as much as possible'. For this, we pick an injective character $\psi: \Fp\to e^*$ and denote by $\cL_{\psi}$ the corresponding Artin-Schreier sheaf $\cL_{\psi}$ on $\A^1$. In the case $p=0$ the Artin-Schreier sheaf does not exist and we always use the Kirillov model to get a similar definition of the Fourier transform as in (\cite{GL}, Appendix A).
 
 We let $\check{G}$ be the Langlands dual as a group over $e$. We also fix a $\check{G}$-invariant isomorphism $\check{\kappa}: \check{\gg}\,\iso\, \check{\gg}^*$.
  
  Write $\cX(\gg)$ (resp., $\cX(\check{\gg})$) for the set of nilpotent orbits in $\gg$ (resp., in $\check{\gg}$). Let $\cN(\gg)\subset\gg$ be the variety of nilpotent elements, and similarly for $\cN(\check{\gg})$. If a maximal torus and a Borel subgroup $T\subset B\subset G$ are picked, we write $\Lambda$ for the coweights lattice of $T$, and $\Lambda^+$ for the dominant coweights of $G$. 
   
\sssec{} 
\label{Sect_1.1.2_conventions}
Recall the categories $\DGCat_{cont}, \DGCat^{non-cocmpl}$ defined in (\cite{GR}, ch. I.1, Section 10.3.1 and 10.3.3). For $C\in\DGCat_{cont}$ equipped with a t-structure, $C^{\heartsuit}$ denotes the heart of $C$. For a prestack $Y$ locally of finite type we write $Shv(Y)^{constr}\subset Shv(Y)$ for the $\DG$-subcategory of constructible objects defined in (\cite{AGKRRV}, Section~F.2). 

 Similarly, let $Y$ be an ind-algebraic stack $Y$ written as $Y\,\iso\,\colim_{i\in I} Y_i$  with $I$ is a filtered $\infty$-category, $Y_i$ an algebraic stack locally of finite type such that for a map $i\to i'$ in $I$ the transition map $\alpha_{i,i'}: Y_i\to Y_{i'}$ is a closed immersion. By definition, $Shv(Y)\,\iso\, \lim_{i\in I^{op}} Shv(Y_i)$ in $\DGCat_{cont}$ with respect to the $!$-restrictions. Passing to the left adjoints, we can also rewrite it as $Shv(Y)\,\iso\,\colim_{i\in I} Shv(Y_i)$ in $\DGCat_{cont}$ with respect to the $!$-direct images. For $i\to i'$ as above $(\alpha_{i,i'})_!: Shv(Y_i)^{constr}\to Shv(Y_{i'})^{constr}$, and we get a diagram 
$$
I\to \DGCat^{non-cocmpl}, \; i\mapsto Shv(Y_i)^{constr}
$$
Define $Shv(Y)^{constr}$ as $\colim_{i\in I} Shv(Y_i)^{constr}$ taken in $\DGCat^{non-cocmpl}$. Note that $Shv(Y)^{constr}\subset Shv(Y)$ is a full subcategory.\footnote{This follows from \cite{Ro}.}  

 Let $1-\Cat$ is the $\infty$-category of $\infty$-categories introduced in (\cite{GR}, ch. I.1, Section~1.1.1). Recall that the forgetful functor $\DGCat^{non-cocmpl}\to 1-\Cat$ preserves filtered colimits. So, $Shv(Y)^{constr}\subset Shv(Y)$ is the full subcategory of those objects, which come as $!$-direct image under $Y_i\to Y$ of some object of $Shv(Y_i)^{constr}$. 
 
 For the convenience of the reader recall the definition of the perverse t-structure on $Y$. By definition, $Shv(Y)^{\le 0}$ is the smallest full subcategory containing $Shv(Y_i)^{\le 0}$ for $i\in I$, closed under extensions and small colimits. This t-structure is accessible and compatible with filtered colimits. The inclusion $Shv(Y)^{constr}\subset Shv(Y)$ is compatible with this t-structure, so $Shv(Y)^{constr}$ inherits a t-structure from $Shv(Y)$.

\sssec{Acknowledgements} I am grateful to Sam Raskin for answering my questions and very useful discussions. I also thank Joakim Faergeman for his comments about a preliminary version of this paper and fruitful email correspondence.

\section{Main results and conjectures} 
\label{Sect_1.2}  

\ssec{Fourier coefficients} 
\sssec{} Let $x\in\gg$ be nilpotent and $\OO:=\OO_x\subset\gg$ the nilpotent orbit through $x$. Let $\sigma: \gsl_2\to \gg$ be the corresonding $\SL_2$-triple, so $x$ is the nilpositive element of this triple (in the sense of \cite{Ra}, Definition~4.2.1). Write $(H, x, Y)$ for the correspodning standard $\gsl_2$-triple in $\gg$.

Let $\gg_i\subset \gg$ be the subspace, on which $\sigma(\Gm)$ acts by $i$, so $\gg=\oplus_i \gg_i$. Let $P\subset G$ be the parabolic such that $\gp:=\Lie P=\oplus_{i\ge 0}\, \gg_i$ is the Jacobson-Morozov parabolic subalgebra of $x$ (\cite{Ra}, Definition~4.3.2). Let $U\subset P$ be its unipotent radical, then $\gu:=\Lie U=\oplus_{i>0} \gg_i$. We also get the subgroup $V\subset U$ such that $\gv:=\Lie V=\oplus_{i\ge 2}\, \gg_i$. Let $M\subset P$ be the Levi such that $\gm=\Lie M=\gg_0$. Set $\gv'=\oplus_{i\ge 3}\; \gg_i$. 
By the theory of $\gsl_2$-representations, $\gv'\subset [\gu, x]$. Let $V'\subset V$ be the subgroup such that $\Lie V'=\gv'$. So, $\gv/\gv'\,\iso\, \gg_2$ as $M$-modules.

 Recall that $x$ is called even iff $\gg_1=0$ (\cite{Ra}, Definition~4.3.6). If $x$ is even then the above shows that $\gg_2\,\iso\, \gv/[\gv,\gv]$.  
 
 By (\cite{CM}, Lemma~4.1.4), $Px=\OO\cap \gv$ is open and dense in $\gv$. More precisely, let 
$$
\cP=\{Z\in\gg_2\mid \gg^Z\cap \gg_{-2}=0\}
$$ 
Then $\cP\subset \gg_2$ is the open $M$-orbit on $\gg_2$, and $\OO\cap \gv=\cP+\gv'=Px$. 
 
 Similarly, let $\cP^-=\{Z\in\gg_{-2}\mid \gg^Z\cap \gg_2=0\}$. Choosing an opposite Borel in $\gsl_2$, from the above one gets the following. The subscheme $\cP^-\subset \gg_{-2}$ is the open $M\times\Gm$-orbit on $\gg_{-2}$. Note that $x\in\cP$ and $Y\in \cP^-$, 
 

 Let $\gm^x$ be the stabilizer of $x$ in $\gm$. Then $\gm^x$ is reductive (cf. \cite{Ra}, Lemma~4.3.4). Note that for any $i\ge 0$, the restriction of $B$ provides a nondegenerate pairing $\gg_i\times\gg_{-i}\to k$. 
 
  We underline that if $x\ne 0$ then $\gg_2\ne 0$. 
  
\sssec{}  Let $(V/V')^*_0\subset (V/V')^*$ be the open $M\times\Gm$-orbit. It is understood that $\Gm$ acts by scalar multiplications. If $p=0$ then we identify $(V/V')^*$ with $\gg_{-2}$ as above, so this open orbit is $\cP^-$.   
 
\sssec{} In the case of $x$ odd we have $[\gv,\gv]=\gp_{\ge 4}:=\oplus_{i\ge 4} \,\gg_i$. Indeed, for $m\ge 2$ the map $ad_x: \gg_{m-2}\to \gg_m$ is surjective by the representation theory of $\gsl_2$. So, in this case 
$$
\gv/[\gv,\gv]\,\iso\, \gg_2\oplus\gg_3
$$ 
 
\sssec{} Consider the diagram 
$$
\Bun_{P/V'}\getsup{\gp} \Bun_P\toup{\gq} \Bun_G
$$ 
For $K\in Shv(\Bun_G)$ we want to consider the Fourier transform of $\gp_!\gq^*K$ with respect to $V/V'$.


\sssec{} If $\cF$ is a $P/V'$-torsor on $X$ write $\cF_M$ for the induced $M$-torsor on $X$, and similarly for $\cF_{P/V}$. 
Since $V/V'$ is a linear representation of $M$, $(V/V')_{\cF_M}$ is a vector bundle on $X$. A datum of a torsor $\cF'$ under $(V/V')_{\cF_M}$ on $X$ is the same as an exact sequence of vector bundles on $X$
\begin{equation}
\label{seq_one}
0\to (V/V')_{\cF_M}\to ?\to \cO_X\to 0
\end{equation}
Namely, $\cF'$ is the torsor of sections of (\ref{seq_one}) over $1$. 


The stack $\Bun_{P/V'}$ classifies $\cF_M\in\Bun_M$ and a torsor under $(U/V')_{\cF_M}$ on $X$. 

\sssec{Case of $x$ even} If $x$ is an even nilpotent element (\cite{Ra}, Definition~4.3.6) then $V=U$. In this case $\Bun_{P/V'}$ classifies $\cF_M\in\Bun_M$ and an exact sequence (\ref{seq_one}) on $X$. 
 
Let $\cY_M$ be the stack classifying $\cF_M\in\Bun_M$, and a section on $X$
\begin{equation}
\label{map_s_first}
s: (V/V')_{\cF_M}\to\Omega
\end{equation}
In this case the Fourier transform functor 
$$
\Four: Shv(\Bun_{P/V'})\to Shv(\cY_M)
$$    
is defined as follows. We have a diagram of projections
$$
\begin{array}{ccc}
&\A^1\\
&\uparrow\lefteqn{\scriptstyle \ev}\\
\cY_M \getsup{a} & \cY_M\times_{\Bun_M} \Bun_{P/V'}
 & \toup{b} \Bun_{P/V'},
\end{array}
$$
and the map $\ev$ sends a point of $\cY_M\times_{\Bun_M} \Bun_{P/V'}$ to the pairing of (\ref{map_s_first}) with (\ref{seq_one}). 

If $p>0$ then for $\cK\in Shv(\Bun_{P/V'})$ set
$$
\Four(\cK)=(a)_!b^*\cK\otimes\ev^*\cL_{\psi}[\dimrel(b)]
$$
Though $\Four$ depends on $\psi$, we do not express this in our notation. 

 Let $\cY_M^0\subset \cY_M$ be the open substack given by the property that $s$ at the generic point of $X$ lies in $(V/V')^*_0$.
 
\sssec{Case of $x$ odd} 
\label{Sect_1.2.6}
Assume $x$ odd. Let $\cX_P$ be the stack classifying  $\cF\in\Bun_{P/V'}$ and a $(V/V')_{\cF_M}$-torsor $\bar\cF$ on $X$ given also by an exact sequence (\ref{seq_one}). Then $(V/V')_{\cF_M}$ is a subsheaf of the sheaf of automorphisms of the $P/V'$-torsor $\cF$. We get the action map $\act: \cX_P\to \Bun_{P/V'}$ sending $(\cF,\bar\cF)$ to the quotient of $\cF\times\bar\cF$ by the diagonal action of $(V/V')_{\cF_M}$. 

 In fact, $\cX_P$ identifies with the stack classifying $\cF,\cF'\in\Bun_{P/V'}$ and an isomorphism 
$$
\mu_P:\cF\times_{P/V'} P/V\,\iso\, \cF'\times_{P/V'} P/V
$$ 
of $P/V$-torsors on $X$. Namely, for $(\cF,\cF',\mu_P)$ as above let $\bar\cF$ be the sheaf of isomorphisms $\cF\,\iso\, \cF'$ of $P/V'$-torsors on $X$ compatible with $\mu_P$. Then $(V/V')_{\cF_M}$ acts on $\bar\cF$ via its action on $\cF$, and in this way $\bar\cF$ becomes a $(V/V')_{\cF_M}$-torsor on $X$. 

 Let $\cY_P$ be the stack classifying $\cF\in \Bun_{P/V'}$ and a section (\ref{map_s_first}). We define the Fourier transform 
$$
\Four_P: Shv(\Bun_{P/V'})\to Shv(\cY_P)
$$ 
as follows. Consider the diagram 
$$
\begin{array}{ccc}
& \AA^1\\
& \uparrow\lefteqn{\scriptstyle \ev_P}\\
\cY_P\getsup{a_P} &\cY_P\times_{\Bun_{P/V'}} \cX_P & \toup{b_P} \cX_P\toup{\act} \Bun_{P/V'},
\end{array}
$$
where $a_P, b_P$ are the projections. The map $\ev_P$ sends a point of $\cY_P\times_{\Bun_{P/V'}} \cX_P$ to the pairing of (\ref{seq_one}) with (\ref{map_s_first}). In the case $p>0$ for $\cK\in Shv(\Bun_{P/V'})$ set 
$$
\Four_P(\cK)=(a_P)_! b_P^*\act^*\cK\otimes\ev_P^*\cL_{\psi}[\dimrel(\act\comp b_P)]
$$
  
Let $\cY_P^0\subset \cY_P$ be the open substack given by the property that $s$ at the generic point of $X$ lies in $(V/V')^*_0$.
 
\begin{Def} We say that $K\in Shv(\Bun_G)$ has no Fourier coefficients corresponding to $\OO$ if 
\begin{itemize}
\item $\Four_P(\gp_!\gq^*K)$ vanishes over $\cY_P^0$ for $x$ is odd;
\item $\Four(\gp_!\gq^*K)$ vanishes over $\cY_M^0$ for $x$ even.
\end{itemize}

 Let $\cF_{\OO}\subset Shv(\Bun_G)$ be the full subcategory of those $K\in Shv(\Bun_G)$ which have no Fourier coefficients corresponding to $\OO$.
\end{Def}

 Note that $\cF_{\OO}\in\DGCat_{cont}$, the embedding $\cF_{\OO}\hook{} Shv(\Bun_G)$ is continuous. 
 
\sssec{} Write $\Nilp$ for the global nilpotent cone as defined in (\cite{AGKRRV}, Appendix D). Recall that it can be seen as a mapping stack
$$
\Maps(X, \cN(\gg)/(G\times\Gm))\times_{\Maps(X, B(\Gm))}\{\Omega_X\}
$$
Write $Shv_{\Nilp}(\Bun_G)\subset Shv(\Bun_G)$ for the $\DG$-subcategory of sheaves with singular support in $\Nilp$ as defined in 
(\cite{AGKRRV}, Section F.8). 

More generally, for a closed $G$-invariant subscheme $Y\subset \cN(\gg)$ write $Shv_Y(\Bun_G)\subset Shv_{\Nilp}(\Bun_G)$ for the full $\DG$-subcategory of those objects, whose singular support is contained in 
$$
\Maps(X, Y/(G\times\Gm))\times_{\Maps(X, B(\Gm))}\{\Omega_X\}
$$
 
\begin{Def} 
\label{Def_2.1.9}
Let $F_{\ov{\OO}}\subset Shv(\Bun_G)$ be the full subcategory equal to $\underset{\OO'\nsubseteq \ov{\OO}}{\cap}\cF_{\OO'}$. 
More generally, for a closed $G$-invariant subscheme $Y\subset \cN(\gg)$ let $F_Y\subset Shv(\Bun_G)$ be the full subcategory $\underset{\OO'\nsubseteq Y}{\cap}\cF_{\OO'}$. Let $F_{\Nilp, Y}$ be the intersection $F_Y\cap Shv_{\Nilp}(\Bun_G)$ inside $Shv(\Bun_G)$.
\end{Def} 

 Note that $F_Y\in\DGCat_{cont}$, the embedding $F_Y\hook{} Shv(\Bun_G)$ is continuous. If $\OO\subset \ov{\OO}'$ are nilpotent orbits then $F_{\ov{\OO}}\subset F_{\ov{\OO}'}$ is a full continuous embedding (and the same for $\Nilp$-versions).
  
\begin{Thm} 
\label{Th_preservation_Hecke}
For any nilpotent orbit $\OO$ in $\gg$, $\cF_{\OO}$ is preserved by Hecke functors acting on $Shv(\Bun_G)$.
\end{Thm}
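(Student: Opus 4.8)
The plan is to show that the Fourier-coefficient construction $K\mapsto \Four_P(\gp_!\gq^*K)$ (and its even analogue) is itself built out of Hecke-type functors, so that it intertwines the Hecke action on $Shv(\Bun_G)$ with a Hecke action on $Shv(\cY_P^0)$; the vanishing condition cutting out $\cF_{\OO}$ is then automatically Hecke-stable. Concretely, it suffices to treat the Hecke functors at a fixed $k$-point $y\in X$ away from the (finitely many) points where a representative $V\in\Gr_G$ meets the support data, and then to propagate via the factorizable/Ran version; so I would fix $y\in X(k)$ and a dominant coweight $\lambda$, and analyze the geometric Hecke correspondence $\Bun_G\getsup{h^\la}\Hecke_G^\la\toup{h^{\ra}}\Bun_G$ together with its pullback along the diagram $\Bun_{P/V'}\getsup{\gp}\Bun_P\toup{\gq}\Bun_G$.

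First I would observe that the whole Fourier-coefficient diagram lives over $\Bun_M$ via $\cF\mapsto\cF_M$, and that the Hecke action at $y$ on $Shv(\Bun_G)$ is compatible, under $\gq$ and the constant-term-type correspondences, with the Hecke action of $M$ at $y$ on everything in sight — this is the standard compatibility of geometric Eisenstein/constant-term functors with Hecke functors (the ``$\check P\hookrightarrow\check G$ intertwines Satake'' statement), applied here with $P$ the Jacobson–Morozov parabolic. The key point is that $V/V'$ (resp.\ the pair $\gg_2\oplus\gg_3$ in the odd case) is an $M$-module, so the vector bundle $(V/V')_{\cF_M}$, the stacks $\cX_P$, $\cY_P$, $\cY_P^0$, and the evaluation map $\ev_P$ are all functorial in the $M$-torsor $\cF_M$; hence a Hecke modification at $y$ of $\cF\in\Bun_{P/V'}$ induces a compatible modification of the section $s$ and of the pairing defining $\ev_P$, at least after passing to the open locus where $y$ avoids the relevant divisors. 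Therefore the composite $\Four_P\circ\gp_!\circ\gq^*$ carries the Hecke functor $H^\la_y$ on $Shv(\Bun_G)$ to a geometric functor $\tilde H^\la_y$ on $Shv(\cY_P)$ that preserves the open substack $\cY_P^0$ and commutes with $j^*$ for $j:\cY_P^0\hookrightarrow\cY_P$.

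Granting this, the theorem follows formally: if $\Four_P(\gp_!\gq^*K)|_{\cY_P^0}=0$ then $\Four_P(\gp_!\gq^*(H^\la_y K))|_{\cY_P^0}=\tilde H^\la_y\bigl(\Four_P(\gp_!\gq^*K)\bigr)|_{\cY_P^0}= \tilde H^\la_y\bigl(\Four_P(\gp_!\gq^*K)|_{\cY_P^0}\bigr)=0$, using that $\tilde H^\la_y$ is computed by a correspondence supported over $y$ and hence commutes with the open restriction. Running the same argument with the even-case functor $\Four$ and $\cY_M^0$ handles the remaining case. Finally, to get honest Hecke functors rather than pointwise ones, I would note that the above is uniform in $y$ and $\la$ and compatible with the factorization structure, so it upgrades to the statement for the Hecke action indexed by $\Rep(\check G)\otimes Shv(X)$; alternatively one invokes that $\cF_{\OO}$ being stable under all $H^\la_y$, $y\in X(k)$, together with continuity of the embedding $\cF_{\OO}\hookrightarrow Shv(\Bun_G)$, already forces stability under the full Hecke action.

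The main obstacle I expect is \emph{bookkeeping of the exceptional locus}: a Hecke modification at $y$ genuinely changes the vector bundle $(V/V')_{\cF_M}$ in a neighborhood of $y$, so the induced map on $\cY_P$ is not an isomorphism on the nose and the identification ``$\tilde H^\la_y$ preserves $\cY_P^0$'' must be checked carefully — one needs that the generic-point condition defining $\cY_P^0$ (namely $s$ landing in $(V/V')^*_0$ at the generic point of $X$) is insensitive to a modification at the closed point $y$, which is true because the generic fibre is untouched, but the cleanest proof routes through the second description of $\cX_P$ via the isomorphism $\mu_P$ of $P/V$-torsors, where the $M$-Hecke compatibility is manifest. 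A secondary technical point is the shift/perversity bookkeeping for $\ev_P^*\cL_\psi$ and $\dimrel(\act\circ b_P)$ under the modification, which only affects a cohomological shift and does not interact with the vanishing; and in characteristic $0$ one replaces $\cL_\psi$ and the Artin–Schreier pairing by the Kirillov-model Fourier transform of \cite{GL}, for which the same functoriality in $\Bun_M$ holds.
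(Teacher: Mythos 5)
Your proposal hinges on the assertion that $\Four_P\circ\gp_!\circ\gq^*$ intertwines the Hecke functor $H^\lambda_y$ on $Shv(\Bun_G)$ with a geometric functor $\tilde H^\lambda_y$ on $Shv(\cY_P)$ commuting with restriction to $\cY_P^0$, which you justify by appealing to the Satake compatibility of constant-term functors for the pair $(P,M)$. This is where the argument breaks. The Fourier coefficient is not the constant term: inserting the Fourier transform along $V/V'$ and the character $\ev_P^*\cL_\psi$ destroys the $M$-equivariance underlying the $\CT$/Satake compatibility. The stabilizer of the generic section $s$ in $M$ is the proper reductive subgroup $\gm^x$, and the character is invariant only under it, not under $M$, so $\Rep(\check M)$ does not act on the coefficient in any naive way; no functor $\tilde H^\lambda_y$ of the kind you posit is actually constructed, and asserting its existence together with preservation of the $\cY_P^0$-vanishing condition is essentially asserting the theorem itself.

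The paper's route is quite different and the difference is substantive. One fixes a $k$-point $(\cF'_M,s')$ of $\cY^0_M$ (resp.\ $\cY^0_P$, $\cY^0_T$), uses the adjunction between $\cA^\lambda_G\ast(-)$ and $(-)\ast\cA^\lambda_G$ to move the Hecke functor off $K$ and onto the Whittaker kernel $q'_!\ev_{s'}^*\cL_\psi$, extends to the Drinfeld compactification ${_{y,\infty}\Bunt_U^{\cF'_M}}$ where the Hecke action is defined, and stratifies by the defect $\theta\in\Lambda_{G,P}$ of the modification at $y$. On each stratum the $\ast$-restriction of the Hecke-translated kernel lies in a Whittaker category $Shv^W(\Bun_U^{\cF_M})^{constr}$ cut out by a groupoid-equivariance (Definitions~\ref{Def_3.2.6}--\ref{Def_3.2.8}, Proposition~\ref{Pp_3.2.9}). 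The crux is the FGV-type rigidity statement, Lemma~\ref{Lm_3.2.12} (cf.\ \cite{FGV}, Lemma~6.2.8): this category \emph{vanishes} when the modified $\cF_M$ has non-positive $s'$-conductor, and otherwise consists only of constant multiples of $\ev_s^*\cL_\psi$, after which $K\in\cF_\OO$ applied at $(\cF_M,s)\in\cY^0_M$ finishes the job. That conductor dichotomy is the real content, and it is precisely what is invisible in your proposal: your ``exceptional locus'' worry correctly notes that a modification at $y$ leaves the generic fibre of $s$ untouched (so $\cY^0$-genericity persists), but it misses the separate and harder question of whether $s$ remains \emph{regular} on all of $X$ after the $\theta$-twist, which is what the conductor condition controls and what forces both the $\theta$-stratification of the compactification and the appeal to Whittaker-category rigidity.
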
 
  
\begin{Rem} Theorem~\ref{Th_preservation_Hecke} implies that for $\OO\in\cX(\gg)$, the full $\DG$-subcategory $F_{\Nilp, \ov{\OO}}$ of $Shv_{\Nilp}(\Bun_G)$ is stable under the action of Hecke functors.
\end{Rem}

\begin{Con} 
\label{Con_2.1.13_equality}
Let $Y\subset \cN(\gg)$ be a closed $G$-invariant subscheme. 
Then 
$$
F_{\Nilp, Y}= Shv_Y(\Bun_G)
$$ 
\end{Con}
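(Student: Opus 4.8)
The plan is to reduce the equality to a comparison, carried out orbit by orbit, between the singular support of a sheaf and its Fourier coefficients, and then to establish the two resulting implications separately.

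Recall (\cite{AGKRRV}, Appendix~D) that $\Nilp$ is partitioned by the locally closed global orbit strata
$$
\Nilp_{\OO}=\Maps(X,\OO/(G\times\Gm))\times_{\Maps(X,B(\Gm))}\{\Omega_X\},
$$
and that for a closed $G$-invariant subscheme $Y\subset\cN(\gg)$ one has $\Nilp_Y=\bigcup_{\OO\subset Y}\Nilp_{\OO}$. Since $\SiSu(K)$ is a closed substack of $\Nilp$ for $K\in Shv_{\Nilp}(\Bun_G)$, this gives
$$
K\in Shv_Y(\Bun_G)\ \Longleftrightarrow\ \SiSu(K)\cap\Nilp_{\OO'}=\emptyset\ \text{ for every }\ \OO'\nsubseteq Y ,
$$
and so \conjref{Con_2.1.13_equality} becomes equivalent to the per-orbit statement: for every nilpotent orbit $\OO$ and every $K\in Shv_{\Nilp}(\Bun_G)$,
$$
K\in\cF_{\OO}\ \Longleftrightarrow\ \SiSu(K)\cap\Nilp_{\OO}=\emptyset .
$$
I will refer to ``$\Leftarrow$'' as the \emph{vanishing} direction (it yields $Shv_Y(\Bun_G)\subset F_{\Nilp,Y}$) and to the contrapositive of ``$\Rightarrow$'', namely $\SiSu(K)\cap\Nilp_{\OO}\neq\emptyset\Rightarrow K\notin\cF_{\OO}$, as the \emph{non-vanishing} direction (it yields $F_{\Nilp,Y}\subset Shv_Y(\Bun_G)$).

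For the vanishing direction I would propagate the bound on $\SiSu(K)$ through the construction of the Fourier coefficient functor $\Four_P\comp\gp_!\comp\gq^*$ (resp.\ $\Four\comp\gp_!\comp\gq^*$ when $x$ is even) followed by $!$-restriction to $\cY_P^0$ (resp.\ $\cY_M^0$). The pullbacks $\gq^*$, $a_P^*$, $b_P^*$ and the action map $\act$ change singular support in the controlled ``pullback'' way; $\gp_!$, after passing to the quotient by $V'$, is a pushforward along a smooth schematic morphism and contracts singular support along the corresponding isotropic correspondence; the Fourier transform is an equivalence realizing the Legendre transform in the $(V/V')$-direction; and the $!$-restriction to $\cY_P^0$ imposes transversality with the open orbit $(V/V')^*_0$. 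Combining these estimates with the explicit description of the Jacobson--Morozov geometry in \secref{Sect_1.2} (and \secref{Sect_1.2.6} for the odd case) --- the identities $\OO\cap\gv=\cP+\gv'$, $\gv/\gv'\iso\gg_2$, in the odd case $\gv/[\gv,\gv]\iso\gg_2\oplus\gg_3$, and the nondegeneracy of $B$ on $\gg_i\times\gg_{-i}$ --- one checks that this transversality is possible only if $\SiSu(K)$ meets $\Nilp_{\OO}$; hence when it does not, the output over $\cY_P^0$ (resp.\ $\cY_M^0$) has empty singular support and therefore vanishes. This step is microlocal bookkeeping of the same nature as the singular-support estimates of (\cite{AGKRRV}, Section~F) and of the constant-term and Hecke functors, and it is consistent with --- and can be localized by means of --- the Hecke-stability of $\cF_{\OO}$ established in \thmref{Th_preservation_Hecke}.

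The non-vanishing direction is where I expect the real difficulty to lie. Given $K\in Shv_{\Nilp}(\Bun_G)$ with $\SiSu(K)\cap\Nilp_{\OO}\neq\emptyset$, one must produce a non-zero Fourier coefficient along $\OO$; the subtle point is that $\SiSu(K)$ may meet $\Nilp_{\OO}$ only in a proper substack while containing much larger strata. The natural route is local-to-global: at a point of $\SiSu(K)\cap\Nilp_{\OO}$, using the Slodowy-type transversal slice to $\OO$ at $x$ attached to the $\gsl_2$-triple $(H,x,Y)$, one reduces the relevant microstalk (vanishing cycles) of $\Four_P(\gp_!\gq^*K)$ over $\cY_P^0$ to a finite-dimensional model computation, and then proves that microlocally along $\Nilp_{\OO}$ the functor $K\mapsto\Four_P(\gp_!\gq^*K)|_{\cY_P^0}$ is conservative on the $\Nilp_{\OO}$-contribution of $\SiSu$ --- so that any appearance of $\Nilp_{\OO}$ in $\SiSu(K)$ is detected. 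This is the geometric incarnation of the classical principle that the wavefront set of an automorphic form both bounds and is detected by its generalized Fourier coefficients (\cite{Ginz}, \cite{J1}), and I expect it to require new input: a clean construction of a ``microlocal Fourier coefficient at $\OO$'' functor together with the identification of its effect with the vanishing-cycles contribution of $\Nilp_{\OO}$. To organize the argument one can invoke the conjectural compatibility with parabolic induction announced in the introduction, together with the Lusztig--Spaltenstein description of $\OO$ as induced from a distinguished orbit on a Levi, so as to reduce to $\OO$ distinguished; the remaining obstacles are then (i) controlling the open-orbit restriction $\cY_P^0\subset\cY_P$ at the level of vanishing cycles, (ii) ruling out cancellation between the $\gg_2$- and $\gg_3$-contributions in the odd case, and (iii) assembling the local microstalk computations into a global non-vanishing statement on $\cY_P^0$, where the genus of $X$ and the factorizable structure of $\Nilp$ come into play.
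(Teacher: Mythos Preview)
The statement you are attempting to prove is labeled as a \emph{Conjecture} in the paper; the paper offers no proof, only a remark recording two partial results: (i) for $Y=\cN(\gg)\setminus\OO_{reg}$ in the $\cD$-module setting, the equality is the main theorem of \cite{FR}, and the paper explicitly notes that their argument ``does not seem to work for $\ell$-adic sheaves''; (ii) for $Y=\{0\}$, $G$ semisimple simply-connected, and $\cD$-modules, one inclusion follows from a result of \cite{Fa}. So there is no ``paper's own proof'' to compare against; what you have written is a strategy toward an open problem, and you seem to be aware of this yourself (you call it a ``proof strategy'' and flag that the non-vanishing direction ``require[s] new input'').

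On the content of your strategy: the per-orbit reformulation you propose, namely that for $K\in Shv_{\Nilp}(\Bun_G)$ one has $K\in\cF_{\OO}\Leftrightarrow\SiSu(K)\cap\Nilp_{\OO}=\emptyset$, is strictly stronger than the conjecture as stated and is not obviously equivalent to it. The conjecture only requires the equality $F_{\Nilp,Y}=Shv_Y(\Bun_G)$ for \emph{closed} $G$-invariant $Y$, whereas your per-orbit statement would also control situations where $\SiSu(K)$ avoids a single locally closed stratum $\Nilp_{\OO}$ while meeting larger strata whose closure contains $\Nilp_{\OO}$; there is no closed $Y\not\supset\OO$ capturing such $K$. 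You should either argue that this extra strength comes for free (e.g.\ from closure properties of $\SiSu$), or reformulate the reduction to respect the partial order on orbits.

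For the ``vanishing'' direction, your microlocal bookkeeping is reasonable in the $\cD$-module setting, but in the constructible context the singular-support calculus of \cite{AGKRRV} is more delicate (e.g.\ there is no Legendre transform interpretation of the Fourier--Deligne transform), and the paper's remark about \cite{FR} suggests this is a genuine obstacle, not just a matter of writing out details. For the ``non-vanishing'' direction, you correctly identify it as the hard part and honestly list the obstructions; your proposed reduction to distinguished orbits via parabolic induction presupposes Conjecture~\ref{Con_2.2.6}, which is itself open in the paper.
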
 

\begin{Rem} i) For $Y=\cN(\gg)-\OO_{reg}$ and the sheaf theory being ind-holonomic $\cD$-modules
 Conjecture~\ref{Con_2.1.13_equality} is proved in the striking paper (\cite{FR}, Theorems B, C). They show that $Shv_Y(\Bun_G)$ coincides with the category of the so-called anti-tempered objects of $Shv_{\Nilp}(\Bun_G)$. Their argument does not seem to work for $\ell$-adic sheaves for example. 
 
\smallskip\noindent 
ii) Assume that the sheaf theory is ind-holonomic $\cD$-modules, and $G$ is semi-simple and simply-connected. Then for $Y=\{0\}$ one has the inclusion $Shv_Y(\Bun_G)\subset F_{\Nilp, Y}$. Indeed, by (\cite{Fa}, Proposition 4.1.4.1), any irreducible local system on $\Bun_G$ is trivial, the claim easily follows. 
\end{Rem}

\sssec{} If we need to underline the dependence on $G$, we will write $\cF_{\OO}(G)$, $F_{\bar\OO}(G)$ and so on. 

\sssec{} The Lusztig-Spaltenstein duality map $d: \cX(\gg)\to \cX(\check{\gg})$ is constructed in (\cite{Sp}, \cite{BV}). This is the order reversing map denoted by $d_{BV}$ in \cite{Ach}. The image of $d$ is called the set of special nilpotent orbits $\cX(\check{\gg})^{sp}\subset \cX(\check{\gg})$. We similarly have $d: \cX(\check{\gg})\to \cX(\gg)$, and the restriction $\cX(\gg)^{sp}\to \cX(\check{\gg})^{sp}$ of $d$ is known to be bijective. One more important property of $d$ is that for any $\OO\in \cX(\gg)$, $\OO\subset\ov{d^2(\OO)}$. By (\cite{BV}, Corollary~A.3), for $\OO\in \cX(\gg)$, $d^2(\cO)$ is the unique smallest special nilpotent orbit dominating $\OO$. 

 In general, $d$ exchanges the zero orbit $\OO_0$ and the regular nilpotent orbit $\OO_{reg}$.  
 
 If $\gg$ is simple it admits the subregular nilpotent orbit $\OO_{subreg}$, the unique open orbit in $\Nilp-\OO_{reg}$. It is known that $d$ exchanges $\OO_{subreg}$ and the so called \select{minimal special} nilpotent orbit. (In general, the minimal nonzero nilpotent orbit $\OO_{min}$ is not special).
 
\sssec{} If $\OO\in\cX(\gg)$, let ${}^<\OO^{sp}\subset \cN(\gg)$ be the union of $\bar\OO'$ for all $\OO'\in \cX(\gg)^{sp}$, which are strictly less than $\OO$. Definition~\ref{Def_2.1.9} for
 $Y={}^<\OO^{sp}$ gives a full subcategory  $F_{{}^<\OO^{sp}}\subset F_{\bar\OO}$. For $\OO\ne \OO_0$ set 
$$
F_{\Nilp, {}^<\OO^{sp}}=F_{{}^<\OO^{sp}}\cap Shv_{\Nilp}(\Bun_G)
$$
If $\OO=\OO_0$ we let $F_{\Nilp, {}^<\OO^{sp}}$ denote the $\DG$-subcategory of $Shv_{\Nilp}(\Bun_G)$ containing only the zero object. 

 The first question, independent of the Langlands correspondence is to see which successive subquotients in the filtration $F_{\bar\OO}$ for $\OO\in \cX(\gg)$ vanish. We expect that restricting the filtration to $\cX(\gg)^{sp}$, we do not lose any information. More precisely, we propose the following.
 
\begin{Con} 
\label{Con_2.1.15}
Let $\OO\in \cX(\gg)$ be non special. Then $F_{\bar\OO}=F_{{}^<\OO^{sp}}$.
\end{Con}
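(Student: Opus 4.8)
\emph{Reduction to a statement about a single orbit.} The inclusion $F_{{}^<\OO^{sp}}\subseteq F_{\bar\OO}$ is immediate, since ${}^<\OO^{sp}\subseteq\bar\OO$ and $F_{Y_1}\subseteq F_{Y_2}$ whenever $Y_1\subseteq Y_2$ (clear from Definition~\ref{Def_2.1.9}). For the reverse inclusion it suffices to prove $F_{\bar\OO}\subseteq\cF_{\OO'}$ for every $\OO'\in\cX(\gg)$ with $\OO'\not\subseteq{}^<\OO^{sp}$. If $\OO'\not\subseteq\bar\OO$ this is automatic, as $\cF_{\OO'}$ is then one of the factors in $F_{\bar\OO}=\bigcap_{\OO''\not\subseteq\bar\OO}\cF_{\OO''}$. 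If $\OO'\subseteq\bar\OO$, a short check — using that $d^2(\OO')$ is the smallest special orbit dominating $\OO'$, that $d^2$ fixes special orbits, and that $\OO$ itself is non-special — shows that $\OO'$ must be non-special and that $d^2(\OO')\not\subseteq\bar\OO$; hence $\cF_{d^2(\OO')}$ is again a factor of $F_{\bar\OO}$, and the conjecture follows once we know: $(\star)$ for every non-special $\OO'\in\cX(\gg)$ one has $\cF_{d^2(\OO')}\subseteq\cF_{\OO'}$. Concretely, $(\star)$ asserts that vanishing of the $d^2(\OO')$-Fourier coefficient forces vanishing of the $\OO'$-Fourier coefficient, i.e.\ that a nonzero Fourier coefficient of $K$ along the non-special orbit $\OO'$ can always be ``raised'' to one along the special orbit $d^2(\OO')$ — the geometric shadow of the statement that the wave-front set of an automorphic form is the closure of a \emph{special} orbit, and of (\cite{J1}, Conjectures~4.2 and~4.3).

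\emph{Strategy for $(\star)$.} I would first shrink the problem: using \thmref{Th_preservation_Hecke}, the conjectural compatibility with parabolic induction (announced in the abstract), and a decomposition of $Shv(\Bun_G)$ into a cuspidal part and Eisenstein series from proper parabolics, one should reduce by induction on the semisimple rank of $G$ to the case in which $\OO'$ is \emph{rigid}, i.e.\ not induced from a proper parabolic (induced orbits being handled by the inductive hypothesis applied to a proper Levi). For rigid $\OO'$ the plan is to construct a correspondence
\[
\cY^{0}_{P_{\OO'}}\ \xleftarrow{\;p\;}\ \cZ\ \xrightarrow{\;q\;}\ \cY^{0}_{P_{d^2(\OO')}}
\]
(with the evident modifications in the even case, $\cY^0_P$ replaced by $\cY^0_M$) intertwining the two Fourier-coefficient functors applied to $\gp_!\gq^*K$, where $\cZ$ and $p,q$ are dictated by the relative position of the Jacobson--Morozov parabolics of $\OO'$ and $d^2(\OO')$ and by the $\SL_2$-representation theory comparing the $M$-modules $\gv/\gv'$ in the two situations (with the identity $\OO\cap\gv=\cP+\gv'$ as the local model), in the spirit of the unipotent-unfolding and root-exchange manipulations of \cite{GRS2, J1}. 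One wants $p$ smooth and surjective and the diagram arranged so that the $d^2(\OO')$-coefficient of $\gp_!\gq^*K$ over $\cY^0$ is the image of the $\OO'$-coefficient under a conservative functor (a smooth pullback followed by a proper pushforward along $\cZ$, possibly after projecting onto a direct summand); then vanishing of the former yields vanishing of the latter, establishing $(\star)$.

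\emph{The main obstacle.} The construction of $\cZ$ for rigid non-special orbits is the genuine difficulty: for these the reduction to smaller groups is unavailable, so one must come to grips directly with the two degenerate-Whittaker geometries — this is precisely the geometric incarnation of the specialness of wave-front sets, which is classically deep and in general open (proved for classical groups, by M{\oe}glin and others). An alternative is to transport $(\star)$ to the spectral side: granting the parallel filtration on $\IndCoh_{\Nilp}(\LocSys(\check{G}))$ and its expected compatibility with \eqref{conj_global_Langlands_for_D-mod}, $(\star)$ turns into the assertion that the singular supports occurring in $\IndCoh_{\Nilp}(\LocSys(\check{G}))$ are saturated under Lusztig--Spaltenstein/Barbasch--Vogan duality, which one would then try to extract from a description of the irreducible components of the global nilpotent cone together with a transversality estimate; but this reformulation relocates, rather than removes, the obstacle.
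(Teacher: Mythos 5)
The statement you were given is a conjecture, not a theorem: the paper supplies no proof of Conjecture~\ref{Con_2.1.15}, only motivation (the $\Sp_4$ example via the classical Novodvorsky/Howe result that every infinite-dimensional representation of $\Sp_4$ has a Whittaker or Bessel model, and the reference to the raising results of \cite{JLS} at the level of automorphic forms). So there is no in-paper proof against which to compare.

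Assessed on its own terms, your proposal is a correct \emph{reduction}, not a proof. The purely combinatorial first step is sound: the inclusion $F_{{}^<\OO^{sp}}\subseteq F_{\bar\OO}$ does follow from monotonicity of $Y\mapsto F_Y$, and your case analysis for the reverse inclusion is correct — for $\OO'\subseteq\bar\OO$ with $\OO'\not\subseteq{}^<\OO^{sp}$ and $\OO$ non-special one checks readily that $\OO'$ is non-special and $d^2(\OO')\not\subseteq\bar\OO$ (otherwise $d^2(\OO')$ would be a special orbit $\le\OO$, forcing either $\OO$ special or $\OO'\subseteq{}^<\OO^{sp}$), so the $\cF_{d^2(\OO')}$ factor is available in $F_{\bar\OO}$. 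Thus the conjecture is equivalent to $(\star)$: for every non-special $\OO'$, $\cF_{d^2(\OO')}\subseteq\cF_{\OO'}$, i.e.\ a nonzero Fourier coefficient along a non-special orbit can always be raised to one along the minimal special orbit dominating it. That identification of the content is exactly right — it is the geometric shadow of \cite{JLS}, which is precisely what the paper cites as supporting evidence.

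The genuine gap is that $(\star)$ is not established. Your proposed two-step strategy — reduce to rigid $\OO'$ via parabolic induction (invoking Conjecture~\ref{Con_2.2.6}-type compatibility and a cuspidal/Eisenstein decomposition of $Shv(\Bun_G)$) and then build a correspondence $\cZ$ between the two degenerate Whittaker geometries $\cY^0_{P_{\OO'}}$ and $\cY^0_{P_{d^2(\OO')}}$ — is plausible and in the right spirit, but neither step is carried out: the induction rests on other conjectures of the paper, and the correspondence $\cZ$ for rigid non-special orbits (the root-exchange / unipotent-unfolding geometry) is exactly where the known function-level proofs become hard and group-by-group. You flag this yourself. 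The spectral-side reformulation you mention is also accurately diagnosed as relocating rather than removing the difficulty. In short: the formal reduction to $(\star)$ is correct and worth recording, but as a proof of the conjecture the proposal stops at the same point the literature does.
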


\sssec{Example} As a motivation for Conjecture~\ref{Con_2.1.15}, consider the first nontrivial example of $G=\Sp_4$. The Hasse diagram of nilpotent orbits in $\gg$ is
$$
\bar\OO_0 \subset \bar \OO_{min}\subset \bar\OO_{subreg}\subset \bar\OO_{reg}
$$
in this case. In the notations of (\cite{CM}, Section~5.1), they are given by the following partitions $\OO_0=(1^4), \OO_{min}=(2, 1^2), \OO_{subreg}=(2^2), \OO_{reg}=(4)$. 
All of them are special except $\OO_{min}$. So, in this case we expect $F_{\bar\OO_{min}}=F_{\bar\OO_0}$. 

 This equality is motivated by a result in the classical theory of automorphic forms (\cite{Novo}, Theorem~3, see also \cite{H1}) saying that any infinite-dimensional representation of $\Sp_4$ over a local non-archimedian field admits either a Whittaker or Bessel model. The analog of this in the global setting would be that an infinite-dimensional automorphic representation of $G(\AA)$ admits a nonzero Fourier coefficient either for $\OO_{reg}$ or $\OO_{subreg}$, here $\AA$ is the ring of ad\`eles of a curve over a finite field. 
 
\sssec{} A much more general support for Conjecture~\ref{Con_2.1.15} is provided by the main result of \cite{JLS}, which is concerned precisely with an analog of Conjecture~\ref{Con_2.1.15} at the classical level of automorphic forms (they also study an analogous question for representations of reductive groups over local non-archimedian fields). 

\sssec{Question} Consider the case of $X=\PP^1$. Then one has the Shatz stratification of $\Bun_G$ by locally closed substacks $\Bun_{G,\lambda}$ indexed by $\lambda\in\Lambda^+$. Write $\IC_{\lambda}$ for the $\IC$-sheaf of the stratum $\Bun_{G,\lambda}$. Given $\lambda\in\Lambda^+$ find the smallest closed $G$-invariant subset $Y\subset \cN(\gg)$ such that $\IC_{\lambda}\in F_Y$. 
 
\ssec{Spectral part and the Langlands correspondence} 

\sssec{} Recall the algebraic stack $\Arth_{\check{G}}(X)$ over $\Spec e$ defined in (\cite{AGKRRV}, 14.2.2). In the notations of \select{loc.cit.} it classifies right t-exact symmetric monoidal functors 
$$
h: \Rep(\check{G})\to \QLisse(X),
$$ 
which we think of as a $\check{G}$-local system $\sigma$ on $X$, and $A\in\H^0(X, h(\check{\gg}^*))$. As in \select{loc.cit.}, let $\Nilp\subset \Arth_{\check{G}}(X)$ denote the closed conical subset of those $(\sigma, A)$ that $A$ is nilpotent. That is, for any local on $X$ trivialization of $\sigma$, one requires that $A$ takes values in $\cN(\check{\gg})\subset \check{\gg}^*$. Here we used the isomorphism $\check{\kappa}$ fixed in Section~\ref{Sect_1.1.2_on_conventions}. Given $\OO'\in \cX(\check{\gg})$, one similarly gets the locus denoted $\ov{\OO}'\subset \Arth_{\check{G}}(X)$ by abuse of notations.

This gives a filtration on 
\begin{equation}
\label{category_for_1.2.12}
\IndCoh_{Nilp}(\LocSys_{\check{G}}^{restr}(X))
\end{equation}
indexed by nilpotent orbits in $\check{\gg}$. Namely, for $\OO'\in \cX(\check{\gg})$ one has the full $\DG$-subcategory
$$
S_{\ov{\OO}'}=\IndCoh_{\ov{\OO}'}(\LocSys_{\check{G}}^{restr}(X)) 
$$
If $\OO\subset \ov{\OO}'$ then $S_{\ov{\OO}}\subset S_{\ov{\OO}'}$. 

 The following is our main conjecture, it proposes a geometric counterpart of (\cite{J1}, Conjectures~4.2 and 4.3). 

\begin{Con} 
\label{Con_1.2.14}
For any $\OO\in \cX(\gg)^{sp}$ there is an equivalence
\begin{equation}
\label{conj_LL_for_any_OO}
Shv_{\Nilp}(\Bun_G)/ F_{\Nilp, {}^<\OO^{sp}}\,\iso\, \IndCoh_{\ov{d(\OO)}}(\LocSys_{\check{G}}^{restr}(X))
\end{equation}
compatible with the Hecke actions. Here $d: \cX(\gg)\to \cX(\check{\gg})$ is the Lusztig-Spaltenstein duality map. For $\OO=\OO_0$ this recovers the conjectural geometric Langlands equivalence 
\begin{equation}
\label{global_Langlands_equiv_constr_context}
Shv_{\Nilp}(\Bun_G)\,\iso\,\IndCoh_{Nilp}(\LocSys_{\check{G}}^{restr}(X))
\end{equation}
of (\cite{AGKRRV}, Conjecture~21.2.7). 
\end{Con}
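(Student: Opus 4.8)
Set $\LL:=\LocSys_{\check{G}}^{restr}(X)$. Since \conjref{Con_1.2.14} refines the geometric Langlands equivalence \eqref{global_Langlands_equiv_constr_context}, the plan is to deduce it from the latter: assume the Hecke-compatible equivalence $\Phi: Shv_{\Nilp}(\Bun_G)\iso\IndCoh_{Nilp}(\LL)$. By \conjref{Con_2.1.13_equality} we have $F_{\Nilp, {}^<\OO^{sp}}=Shv_{{}^<\OO^{sp}}(\Bun_G)$, and by \thmref{Th_preservation_Hecke} and the Remark following it both this subcategory and its $\Phi$-image are Hecke-stable; hence $\Phi$ descends to an equivalence of Verdier quotients
\begin{equation}
\label{eq_descended_quotient}
Shv_{\Nilp}(\Bun_G)\big/F_{\Nilp, {}^<\OO^{sp}}\ \iso\ \IndCoh_{Nilp}(\LL)\big/\Phi\big(Shv_{{}^<\OO^{sp}}(\Bun_G)\big).
\end{equation}
The inclusion $\IndCoh_{\ov{d(\OO)}}(\LL)\hookrightarrow\IndCoh_{Nilp}(\LL)$ preserves colimits, so it is coreflective and yields a semiorthogonal decomposition of $\IndCoh_{Nilp}(\LL)$ whose right piece is $\IndCoh_{\ov{d(\OO)}}(\LL)$ and the Verdier quotient by whose (left orthogonal) left piece recovers $\IndCoh_{\ov{d(\OO)}}(\LL)$. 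Thus \conjref{Con_1.2.14} becomes the assertion that $\Phi$ carries $Shv_{{}^<\OO^{sp}}(\Bun_G)$ onto that left orthogonal.

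The heart of the matter is therefore a microlocal compatibility of $\Phi$: that it matches the singular-support subcategories of $Shv_{\Nilp}(\Bun_G)$ indexed by $\OO\in\cX(\gg)$ (via the global Hitchin nilpotent cone) with those of $\IndCoh_{Nilp}(\LL)$ indexed by $\OO'\in\cX(\check{\gg})$ (via $\Nilp\subset\Arth_{\check{G}}(X)$), the two indexings being identified through the Lusztig--Spaltenstein map $d$. Granting this, $\Phi(Shv_{{}^<\OO^{sp}}(\Bun_G))$ is the subcategory cut out by the spectral conditions attached to the special orbits $\OO'<\OO$; because $d$ restricts to an order-reversing bijection $\cX(\gg)^{sp}\to\cX(\check{\gg})^{sp}$, these are precisely the conditions making an object left orthogonal to all of $\IndCoh_{\ov{d(\OO)}}(\LL)$ --- the order reversal of $d$ and the restriction to special orbits are exactly what make the dictionary close up. One should first verify the extremes as consistency checks: $\OO=\OO_0$ gives back \eqref{global_Langlands_equiv_constr_context} (both the quotiented subcategory and the support condition are vacuous), while $\OO=\OO_{reg}$ gives $Shv_{\Nilp}(\Bun_G)/(\cF_{\OO_{reg}}\cap Shv_{\Nilp}(\Bun_G))\iso\QCoh(\LL)$, identifying the quotient by the anti-tempered objects with the tempered spectral category --- a statement closely tied, for ind-holonomic $\cD$-modules, to (\cite{FR}, Theorems B, C).

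A second, less conditional route is to describe the left side of \eqref{eq_descended_quotient} directly. Using \conjref{Con_2.1.15} to reduce to special $\OO$ and the definition of $\cF_{\OO'}$ as a kernel, this quotient should be the essential image of $\cK\mapsto\Four_P(\gp_!\gq^*\cK)|_{\cY_P^0}$ (respectively $\Four(\gp_!\gq^*\cK)|_{\cY_M^0}$), an ``$\OO$-Whittaker'' category. A geometric form of the structure theory of Fourier coefficients --- the global counterpart of \cite{JLS} and of (\cite{J1}, Conjectures~4.2 and 4.3) --- would present it as a genuine Whittaker-type category for the Jacobson--Morozov Levi $M$, twisted by the open orbit $(V/V')^*_0$; the geometric Langlands equivalence for $M$ in its Whittaker normalization, together with a purely spectral ``induction along the $\SL_2$ attached to $d(\OO)$'' identity, would then produce $\IndCoh_{\ov{d(\OO)}}(\LL)$, the Hecke compatibility being furnished on the automorphic side by \thmref{Th_preservation_Hecke}.

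The main obstacle is that both routes rest on deep open inputs. The first presupposes \eqref{global_Langlands_equiv_constr_context} itself, \conjref{Con_2.1.13_equality} (known only in the top stratum and only for $\cD$-modules, by \cite{FR}), and the orbit-refined microlocal compatibility of $\Phi$, which is not available in general. In the second route the genuinely hard point is the non-vanishing half of the structure theory of Fourier coefficients --- that an object of $Shv_{\Nilp}(\Bun_G)$ whose singular support meets $\ov{\OO}$ has a nonzero Fourier coefficient for $\OO$ --- and, to a lesser extent, the spectral induction identity describing how $\IndCoh_{\ov{d(\OO)}}(\LL)$ is assembled from $\QCoh$ of the reductive centralizer of the relevant $\SL_2$; the latter alone looks within reach of current methods on $\LL$, whereas the non-vanishing statement and the microlocal refinement of $\Phi$ are the real cores of the problem.
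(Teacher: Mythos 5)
The statement is Conjecture~\ref{Con_1.2.14}, which the paper states but does not prove: it is the paper's main conjecture, offered as a geometric counterpart of (\cite{J1}, Conjectures~4.2 and~4.3), and the only supporting discussion is in the two remarks that follow it --- that for $\OO=\OO_{reg}$ it recovers the tempered form of geometric Langlands from \cite{FR} (modulo Conjectures~\ref{Con_2.1.15} and~\ref{Con_2.1.13_equality}), and that for $\OO=\OO^{sp}_{min}$ with $G$ simple (modulo Conjecture~\ref{Con_2.1.13_equality}) it recovers Faergeman's equivalence $Shv_{\Nilp}(\Bun_G)/Shv_0(\Bun_G)\iso\IndCoh_{\ov{\OO}_{subreg}}(\LocSys_{\check{G}}^{restr}(X))$, which is established for ind-holonomic $\cD$-modules in \cite{Fa}. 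There is therefore no proof in the paper to compare your proposal against, and your write-up should not be read as a proof either; it is, as you yourself frame it, a conditional roadmap.

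As such a roadmap it is coherent and isolates the real content: granting the Hecke-compatible equivalence $\Phi$ of \eqref{global_Langlands_equiv_constr_context} together with Conjectures~\ref{Con_2.1.13_equality} and~\ref{Con_2.1.15}, what remains is exactly the assertion you label a ``microlocal compatibility'' of $\Phi$ --- that it intertwines the orbit-refined singular-support filtration of $Shv_{\Nilp}(\Bun_G)$ with the orbit filtration $\IndCoh_{\ov{\OO}'}$ on the spectral side, the indexings linked via $d$. That is the actual point of the conjecture, is nowhere stated in the paper as a separate lemma, and is as hard as the conjecture itself. Two cautions. (a) In the presentable setting the inclusion $\IndCoh_{\ov{d(\OO)}}\hookrightarrow\IndCoh_{Nilp}$ indeed admits a right adjoint, and the quotient of $\IndCoh_{Nilp}$ by the kernel of that coreflector is $\IndCoh_{\ov{d(\OO)}}$; but under the standard convention $\Hom(B,A)=0$ for $B=\IndCoh_{\ov{d(\OO)}}$, that kernel is the \emph{right} orthogonal, and --- more importantly --- identifying $\Phi\big(F_{\Nilp,{}^<\OO^{sp}}\big)$ with exactly this kernel is the nontrivial microlocal input, not a formal consequence of (i)--(iii). (b) You omit the minimal-special case, which the paper singles out because it is the one nontrivial instance of the conjecture currently known (for $\cD$-modules); it is the natural second sanity check after $\OO_0$ and $\OO_{reg}$. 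Your second route (an ``$\OO$-Whittaker'' description of the quotient matched against a spectral induction along the $\SL_2$ of $d(\OO)$) is attractive and close in spirit to how the regular case proceeds via Whittaker normalization, but it imports further unformulated ingredients --- a geometric structure theory of Fourier coefficients in the spirit of \cite{JLS}, and the spectral induction identity --- that the paper does not formulate even conjecturally. Your honest audit of what is missing is the most useful part of the proposal.
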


\begin{Rem} Take $\OO=\OO_{reg}\in \cX(\gg)^{sp}$ and $Y=\cN(\gg)-\OO_{reg}$. In view of Conjectures~\ref{Con_2.1.15} and \ref{Con_2.1.13_equality} we expect that 
$$
Shv_{\Nilp, {}^<\OO^{sp}}=F_{\Nilp, Y}=Shv_Y(\Bun_G)
$$ 
coincides with the category $Shv_{\Nilp}(\Bun_G)^{anti-temp}$ of anti-tempered objects of 
$$
Shv_{\Nilp}(\Bun_G)
$$ 
In this case (\ref{conj_LL_for_any_OO}) would become the tempered geometric Langlands conjecture as formulated in (\cite{FR}, Section~1.6.2):
$$
Shv_{\Nilp}(\Bun_G)/Shv_{\Nilp}(\Bun_G)^{anti-temp}\,\iso\, \QCoh(\LocSys_{\check{G}}^{restr}(X))
$$
\end{Rem}
\begin{Rem} Assume $G$ simple and take $\OO$ to be the minimal special nilpotent orbit $\OO^{sp}_{min}$ in $\cN(\gg)$, so $d(\OO)=\OO_{subreg}$. By definition, ${}^<\OO^{sp}=\{0\}$. In this case assuming Conjecture~\ref{Con_2.1.13_equality} we get 
 $F_{\Nilp, {}^<\OO^{sp}}=Shv_0(\Bun_G)$. So, Conjecture~\ref{Con_1.2.14} predicts in this case an equivalence
$$
Shv_{\Nilp}(\Bun_G)/Shv_0(\Bun_G)\,\iso\, \IndCoh_{\ov{\OO}_{subreg}}(\LocSys_{\check{G}}^{restr}(X))
$$
If our sheaf theory is that of ind-holonomic $\cD$-modules then the latter equivalence is established in \cite{Fa}.
\end{Rem}

\ssec{Compatibility with the parabolic induction} 

\sssec{} The idea of the compatibility of Conjecture~\ref{Con_1.2.14} with the parabolic induction essentially appears already in  David Ginzburg's paper \cite{Ginz}. 

 Recall the definition of induced orbits (\cite{CM}, Section~7.1). For a parabolic subalgebra $\gp\subset \gg$ with Levi decomposition $\gp=\gm\oplus\gu$ let $\OO\in\cX(\gm)$. By (\cite{CM}, Theorem~7.1.1), there is a unique nilpotent orbit 
$\Ind_{\gp}^{\gg}(\OO)\in \cX(\gg)$ such that 
$$
\Ind_{\gp}^{\gg}(\OO)\cap (\OO+\gu)\subset \OO+\gu
$$ 
is an open dense subset. It is \select{the nilpotent orbit in $\gg$ induced from $\OO$}. Recall that $\dim \Ind_{\gp}^{\gg}(\OO)=\dim\OO+2\dim\gu$, 
and $\Ind_{\gp}^{\gg}(\OO)$ depends only on $\gm$ and not on the choice of a parabolic subalgebra containing $\gm$ (\cite{CM}, Theorem~7.1.3). Therefore, we also write $\Ind_{\gm}^{\gg}(\OO):=\Ind_{\gp}^{\gg}(\OO)$. The induction of the zero orbit $\Ind_{\gp}^{\gg}(\OO_0)$ is called \select{a Richardson orbit}. 

 The obtained map $\Ind_{\gm}^{\gg}: \cX(\gm)\to\cX(\gg)$ has the following remarkable property. 

\begin{Pp}[(\cite{CM}, Theorem~8.3.1), (\cite{BV}, Proposition~A.2)] Let $\OO_{\gm}\in \cX(\gm)$ and $\cO\in \cX(\gg)$ containing $\OO_{\gm}$. Then one has
$$
d(\OO))=\Ind_{\check{\gm}}^{\check{\gg}}(d(\OO_{\gm}))
$$
\end{Pp}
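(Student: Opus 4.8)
The statement to prove is that, reading ``$\OO\in\cX(\gg)$ containing $\OO_\gm$'' as the $\gg$-orbit through any point of $\OO_\gm$ --- that is, $\OO=G\cdot\OO_\gm=:\on{Sat}_\gm^\gg(\OO_\gm)$ --- the Lusztig--Spaltenstein map intertwines \emph{saturation} of orbits on the $\gg$-side with \emph{induction} of orbits on the $\check\gg$-side:
$$
d_\gg\bigl(\on{Sat}_\gm^\gg(\OO_\gm)\bigr)\;=\;\Ind_{\check\gm}^{\check\gg}\bigl(d_\gm(\OO_\gm)\bigr).
$$
(Working out $G=\SL_3$ and $G=\Sp_4$ pins down the normalizations.) The plan is to transport everything to the Weyl group $W$ --- canonically that of both $\gg$ and $\check\gg$ --- together with its parabolic subgroup $W_\gm=W_{\check\gm}\subset W$. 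Attaching to a special nilpotent orbit the Springer representation of its trivial local system gives bijections $\on{sp}_\gg,\on{sp}_{\check\gg}$ from special orbits onto Lusztig's special representations of $W$; with the standard normalization ($\OO_{reg}\mapsto\triv$, $\OO_0\mapsto\on{sgn}$) the duality map is characterized (\cite{Sp}) by $\on{sp}_{\check\gg}\circ d_\gg=D_W\circ\on{sp}_\gg$, where $D_W$ is Lusztig's duality involution on special $W$-representations, and likewise for $\gm$; moreover the Lusztig--Spaltenstein theorem on induced classes gives $\on{sp}_{\check\gg}(\Ind_{\check\gm}^{\check\gg}\check\OO)=j_{W_\gm}^W(\on{sp}_{\check\gm}\check\OO)$ for special $\check\OO$, where $j_{W_\gm}^W$ is Macdonald--Lusztig truncated ($j$-)induction, which is multiplicity-free, preserves the $b$-invariant, and carries special representations to special representations.

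Since $d_\gg\circ d_\gg$ sends an orbit to the smallest special orbit above it (\cite{BV}, Cor.~A.3), the left-hand side above equals $d_\gg(\wt\OO)$ with $\wt\OO$ the smallest special $\gg$-orbit whose closure contains $\OO_\gm$. Applying $\on{sp}_{\check\gg}$ to the displayed identity, it becomes the purely group-theoretic assertion
$$
D_W\bigl(\on{sp}_\gg(\wt\OO)\bigr)\;=\;j_{W_\gm}^W\bigl(D_{W_\gm}(\on{sp}_\gm(\OO_\gm))\bigr),
$$
i.e. that the Springer representation of $\wt\OO$ is obtained from $\on{sp}_\gm(\OO_\gm)$ by ``sign-twisted truncated induction''. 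Since $D$ differs from $(-)\otimes\on{sgn}$ only by passage to the special member of a family, the content is in essence: $\on{sp}_\gg(\wt\OO)$ is the top-$b$ truncated induction of $\on{sp}_\gm(\OO_\gm)$ --- equivalently, the $b$-invariant adds up correctly along saturation --- together with the compatibility of $j$-induction with Lusztig's partition of $\wh W$ into families and the surjectivity of $\on{sp}_\gg,\on{sp}_{\check\gg}$, so that the family bookkeeping matches on both sides.

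This last, $W$-representation-theoretic step is where the work lies, and I expect it to be the main obstacle. For classical $\gg$ it can be made fully explicit: $d$ is ``transpose, then collapse'', $\on{Sat}_\gm^\gg$ is ``merge the block partitions'', $\Ind_{\check\gm}^{\check\gg}$ is ``add the block partitions'', and for $\gg=\gsl_n$ the identity reduces to the elementary fact that the conjugate of a union of partitions is the sum of their conjugates, the orthogonal and symplectic cases following by inserting the collapse operations; for exceptional $\gg$ one checks it against the tables of the Springer correspondence and of families. This is essentially the route of \cite{CM} and of the appendix of \cite{BV}. An alternative uniform argument, due to Barbasch--Vogan, realizes $\ov{d(\OO)}$ as the associated variety of the maximal primitive ideal of $U(\check\gg)$ at the infinitesimal character determined by the weighted Dynkin diagram of $\OO$ via the $\gg\leftrightarrow\check\gg$ swap, interprets saturation and induction of orbits as the associated-variety shadows of the corresponding operations on primitive ideals, and deduces the formula from a compatibility of parabolic induction of primitive ideals with restriction of infinitesimal characters to a Levi. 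Either way, once the Weyl-group (or primitive-ideal) identity is secured, transporting it back through the Springer correspondences yields the Proposition.
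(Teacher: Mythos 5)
The paper does not prove this Proposition --- it is quoted as a known result, with pointers to (\cite{CM}, Theorem~8.3.1) and (\cite{BV}, Proposition~A.2), and used downstream to motivate Conjecture~\ref{Con_2.2.6}. So there is no proof in the paper to compare against; what you have written is a sketch of the proof that \emph{is} in those references, and you say so yourself at the end.

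As a sketch, the skeleton is right: transport to $\widehat W$ via the Springer correspondence, use Spaltenstein's characterization $\on{sp}_{\check\gg}\circ d_\gg = D_W\circ\on{sp}_\gg$ (and its $\gm$-analogue), and the Lusztig--Spaltenstein identification of $\Ind_{\gm}^{\gg}$ on orbits with truncated ($j$-)induction on Springer representations, then reduce to an identity inside $\widehat W$. Two points where the argument is not yet an argument. First, the claim that the reduced identity ``is in essence'' the statement that $b$-invariants add up along saturation, ``together with the compatibility of $j$-induction with Lusztig's families,'' is the entire content of the Proposition --- $j$-induction does not simply commute with $\otimes\on{sgn}$, and $D_W$ differs from $\otimes\on{sgn}$ precisely by a nontrivial operation inside families, so the phrase ``the family bookkeeping matches on both sides'' is where the hard work is hiding; the cited references in fact settle this by explicit partition combinatorics in classical types and by consulting the tables of the Springer correspondence and of families in exceptional types, which you acknowledge. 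Second, one should flag the local-system issue: the Lusztig--Spaltenstein theorem a priori identifies $j_{W_\gm}^W(\on{sp}_\gm(\OO_\gm,\triv))$ with the Springer representation of $(\Ind_{\gm}^{\gg}\OO_\gm,\cL)$ for some equivariant local system $\cL$, and one must check $\cL$ is trivial for the chain of identifications to close. None of this means the route is wrong --- it is the standard route --- but as written the ``proof'' is an annotated table of contents for the references rather than a self-contained argument, which is fine here since the paper itself treats the statement as a citation.
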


\sssec{} For a parabolic subgroup $P\subset G$ with Levi quotient $M$ consider the diagram of natural maps
\begin{equation}
\label{diag_for_Sect_2.2.3}
\Bun_M\getsup{\gq^P} \Bun_P\toup{\gp^P}\Bun_G
\end{equation}
The Eisenstein series functor $\Eis_!: Shv(\Bun_M)\to Shv(\Bun_G)$ is defined by
$$
\Eis_!(\cK)=\gp^P_!(\gq^P)^*\cK
$$
\sssec{} Let also $\Bunt_P$ be the Drinfeld compactification of $\Bun_P$ defined in \cite{BG}. More precisely, if $[G,G]$ is simply-connected then $\Bun_P$ is defined as in \select{loc.cit.}, and in general one corrects this definition as in (\cite{Sch}, Section~7.4).
Then $\Bun_P\subset \Bunt_P$ is an open substack, and the diagram (\ref{diag_for_Sect_2.2.3}) extends to the following one
$$
\Bun_M\,\getsup{\tilde\gq^P} \,\Bunt_P\,\toup{\tilde\gp^P}\,\Bun_G
$$
The compactified Eisenstein series functor $\Eis_{!*}: Shv(\Bun_M)\to Shv(\Bun_G)$
is defined as in \cite{BG} by
$$
\Eis_{!*}(\cK)=\tilde\gp^P_!((\tilde\gq^P)^*\cK\otimes \IC_{\Bunt_P}),
$$
here $\IC_{\Bunt_P}$ is the $\IC$-sheaf on $\Bunt_P$.

\sssec{} It is expected that both $\Eis_!$ and $\Eis_{!*}$ restrict to the functors $Shv_{\Nilp}(\Bun_M)\to Shv_{\Nilp}(\Bun_G)$, that is, preserve the nilpotence of the singular support. 

 Moreover, the so obtained conjectural functor 
$$
\Eis_!: Shv_{\Nilp}(\Bun_M)\to Shv_{\Nilp}(\Bun_G)
$$ 
is expected to be compatible with the conjectural equivalence (\ref{global_Langlands_equiv_constr_context}) in the same sense as in the context of $\cD$-modules. Recall that the analog of $\Eis_!$ for the $\cD$-module context is compatible with the conjectural equivalence (\ref{conj_global_Langlands_for_D-mod}) in the sense of (\cite{AG}, Section~1.2.3). 

Based on the existing works at the level of functions (\cite{GRS2}-\cite{JL2}), we suggest the following.
\begin{Con} 
\label{Con_2.2.6}
Let $\OO\in\cX(\gm)$, $\OO'=\Ind_{\gm}^{\gg}(\OO)\in\cX(\gg)$
 and $K\in F_{\bar\OO}(M)$. Then $\Eis_!(K)\in F_{\bar\OO'}(G)$. 
\end{Con}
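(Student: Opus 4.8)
We outline a strategy to prove Conjecture~\ref{Con_2.2.6}. The plan is to transport to the present geometric setting the classical ``unfolding'' (exchange of roots, or Jacquet module) computation of the unipotent periods of an Eisenstein series, and to reduce the statement to a monotonicity property of orbit induction. By the transitivity of the Eisenstein series functor ($\Eis_!$ for $P\subset G$ factors as $\Eis_!$ for a maximal $P_1\supset P$ precomposed with $\Eis_!$ for $P\cap M_1\subset M_1$, where $M_1$ is the Levi of $P_1$) and the transitivity $\Ind_{\gm}^{\gg}\iso\Ind_{\gm_1}^{\gg}\comp\Ind_{\gm}^{\gm_1}$ of orbit induction, one reduces by induction on the semisimple rank of $G$ to the case of a maximal parabolic $P\subset G$, which we now treat. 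So fix a maximal $P$ and a nilpotent orbit $\wt\OO$ in $\gg$ with $\wt\OO\nsubseteq\bar\OO'$; we must show $\Eis_!(K)\in\cF_{\wt\OO}(G)$, that is, that $\Four_{\wt P}(\wt\gp_!\wt\gq^*\Eis_!(K))$ vanishes over $\cY^0_{\wt P}$ (and likewise $\Four(\wt\gp_!\wt\gq^*\Eis_!(K))$ over $\cY^0_M$ in the even case), where $\wt P, \wt V, \wt V', \wt\gp, \wt\gq,\dots$ denotes the Jacobson-Morozov data attached to $\wt\OO$ as in Section~\ref{Sect_1.2.6}.

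Unwinding $\wt\gp_!\wt\gq^*\Eis_!(K)=\wt\gp_!\wt\gq^*\gp^P_!(\gq^P)^*K$ and using base change along the fibre product $Z:=\Bun_{\wt P}\times_{\Bun_G}\Bun_P$, the Fourier coefficient of $\Eis_!(K)$ in question becomes, up to a cohomological shift and a twist by the pullback of $\cL_\psi$ along the total evaluation map, a $!$-pushforward of a $*$-pullback of $K\in Shv(\Bun_M)$ from a stack fibred over $Z$. The stack $Z$ carries a stratification refining the one by the generic relative position of the $\wt P$-reduction and the $P$-reduction of the underlying $G$-bundle (an element of $\wt P\backslash G/P$, locally constant on a dense open of $X$); to keep the boundary strata of $Z$ --- where this relative position degenerates along a non-empty closed subscheme of $X$ --- under control, one should carry the computation out on the Drinfeld compactification $\Bunt_P$, using a cleanness statement in the spirit of \cite{BG} and passing between $\Eis_!$ and $\Eis_{!*}$. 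On a given stratum, the restriction of $\ev_{\wt P}$ pairs the section $s$ (of a twist of $\wt V/\wt V'$) against the $\wt V/\wt V'$-component of the $\wt P$-reduction; transporting this component to $\gm$ via the fixed isomorphism $B:\gg\iso\gg^*$ yields a section that is nilpotent for a suitable unipotent subgroup of $M$, and a partial Fourier inversion together with the projection formula then identify the contribution of the stratum with a twist --- by a local system pulled back from the base --- of the restriction, to a locally closed substack of the relevant $\cY^0$, of the Fourier coefficient of $K$ on $\Bun_M$ attached to a well-defined nilpotent orbit $\OO_w\in\cX(\gm)$. The key geometric lemma to be established --- the incarnation of the classical root-exchange identities of \cite{GRS2}, \cite{Ginz}, \cite{JLS} and of the analysis of induced orbits in \cite{CM}, Section~7.1 --- is that whenever the descended datum of a stratum is generic, which is precisely what restriction over $\cY^0_{\wt P}$ forces, one has $\wt\OO\subseteq\ov{\Ind_{\gm}^{\gg}\OO_w}$.

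Granting this lemma, the argument concludes as follows. For the fixed $\wt\OO\nsubseteq\bar\OO'$ and any contributing stratum, with attached orbit $\OO_w$, there are two cases. If $\OO_w\nsubseteq\bar\OO$, the contribution vanishes because $K\in\cF_{\OO_w}(M)$, by the hypothesis $K\in F_{\bar\OO}(M)=\underset{\OO''\nsubseteq\bar\OO}{\cap}\cF_{\OO''}(M)$. If $\OO_w\subseteq\bar\OO$, then the monotonicity of orbit induction for the closure order --- immediate from $\ov{\Ind_{\gm}^{\gg}\OO}=\ov{G\cdot(\OO+\gu)}$ --- combined with the lemma gives $\wt\OO\subseteq\ov{\Ind_{\gm}^{\gg}\OO_w}\subseteq\ov{\Ind_{\gm}^{\gg}\OO}=\bar\OO'$, contradicting the choice of $\wt\OO$; so no such stratum occurs. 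Hence every stratum contributes zero, $\Four_{\wt P}(\wt\gp_!\wt\gq^*\Eis_!(K))$ vanishes over $\cY^0_{\wt P}$, and since $\wt\OO$ was an arbitrary orbit with $\wt\OO\nsubseteq\bar\OO'$ we conclude $\Eis_!(K)\in F_{\bar\OO'}(G)$.

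The main obstacle is twofold. First, making the unfolding rigorous in the DG setting: the non-properness of $\gp^P$ forces genuine care with the boundary strata of $Z$, which should be handled via $\Bunt_P$, $\IC_{\Bunt_P}$ and cleanness as in \cite{BG}, and one must verify that these boundary contributions do not spoil the vanishing. Second, and more essential, is the representation-theoretic heart of the second paragraph: the precise identification of the orbit $\OO_w$ attached to each stratum, and the proof that the largest orbit of $\gm$ so produced is exactly $\Ind_{\gm}^{\gg}$ and not larger --- the geometric form of the classical root-exchange manipulations, underpinned by the dimension count $\dim\Ind_{\gp}^{\gg}(\OO)=\dim\OO+2\dim\gu$ --- which is where the bulk of the work lies.
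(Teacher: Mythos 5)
This statement is a \emph{conjecture} (Conjecture~\ref{Con_2.2.6}); the paper does not prove it, only records the motivation from the classical theory of automorphic forms (notably \cite{Ginz}, Conjecture~5.1, and \cite{GRS2}--\cite{JLS}). So there is no proof in the paper to compare against, and your text is a strategy outline rather than a proof, which you yourself acknowledge.

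Two substantive gaps remain. The first, which you flag, is the representation-theoretic lemma at the heart of the unfolding: the identification, for each stratum of $Z=\Bun_{\wt P}\times_{\Bun_G}\Bun_P$ meeting the generic locus $\cY^0_{\wt P}$, of the descended datum with a Fourier coefficient of $K$ attached to a well-defined orbit $\OO_w\in\cX(\gm)$, together with the inclusion $\wt\OO\subseteq\ov{\Ind_{\gm}^{\gg}\OO_w}$. At the level of functions this is the content of the root-exchange computations of the cited papers, which are carried out group by group with considerable care; a uniform geometric formulation and proof of this lemma is itself an open problem, not a step one may take on faith. The second gap is the treatment of the boundary of the Drinfeld compactification. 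You invoke ``cleanness in the spirit of \cite{BG} and passing between $\Eis_!$ and $\Eis_{!*}$,'' but $\Eis_!$ and $\Eis_{!*}$ genuinely differ by boundary contributions, cleanness for a general $\Bunt_P$ is subtler than for $\Bunt_B$ (cf.\ \cite{BFGM}), and the paper itself leaves the $\Eis_{!*}$-analog of this conjecture as an explicit open question at the end of Section~2.2 --- so one cannot freely substitute one for the other. The boundary strata of $Z$ must be analyzed on their own terms, and the argument as written does not establish that their contributions vanish. The surrounding scaffolding --- reduction to maximal parabolics via transitivity of $\Eis_!$ and of orbit induction, the dichotomy between $\OO_w\nsubseteq\bar\OO$ (killed because $K\in\cF_{\OO_w}(M)$) and $\OO_w\subseteq\bar\OO$ (ruled out by the key lemma plus the monotonicity $\ov{\Ind_{\gm}^{\gg}\OO_1}\subseteq\ov{\Ind_{\gm}^{\gg}\OO_2}$ for $\OO_1\subseteq\ov{\OO}_2$) --- is sound and is the right global shape for an argument of this kind, but the two items above are exactly where the theorem would have to be won.
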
  

\sssec{Example} The constant sheaf $e_{\Bun_M}$ on $\Bun_M$ lies in $F_{\bar\OO_0}(M)$. Let $\OO'=\Ind_{\gm}^{\gg}(\OO_0)$ be the corresponding Richardson orbit.
For $G=\GL_n$ the analog at the level of functions of the property $\Eis_!(e_{\Bun_M})\in F_{\bar\OO'}(G)$ appears as (\cite{Ginz}, Conjecture~5.1). 

\sssec{Question} Should we expect the analog of Conjecture~\ref{Con_2.2.6} with $\Eis_!$ replaced by $\Eis_{!*}$?

\section{Examples}
\label{Sect_Examples}

In this section we prove Theorem~\ref{Th_preservation_Hecke} for a regular nilpotent $x$. We also discuss the example of the minimal nilpotent orbit. 

\ssec{Regular nilpotent orbit}
\label{Sect_2.1}

\sssec{} Pick a maximal torus and a Borel subgroup $T\subset B\subset G$. Let $\cI$ denote the set of vertices of the Dynkin diagram. For $i\in \cI$ let $\alpha_i$ (resp., $\check{\alpha}_i$) denote the corresponding coroot (resp., root) of $(T,G)$. For $i\in\cI$ pick a nonzero vector $x_{\check{\alpha}_i}$ in the coresponding root subspace $X_{\check{\alpha}_i}$ of $\gg$. Take $x=\sum_{i\in\cI} x_{\check{\alpha}_i}$. Recall that $x$ is always a distinguished nilpotent (\cite{Ra}, Definition~4.3.1), and the corresponding Jacobson-Morozov parabolic subalgebra is $\gb=\Lie B$.  Recall that each distinguished nilpotent is even by (\cite{CM}, Theorem~8.2.3), so $x$ is even. Let $\OO$ denote the nilpotent orbit of $\gg$ containing $x$, we refer to it as the regular nilpotent orbit. 
By $w_0$ we denote the longest element of the Weyl group of $(T, G)$.

In the rest of Section~\ref{Sect_2.1} we prove Theorem~\ref{Th_preservation_Hecke} for $x$ regular.

\sssec{} Let $\check{\Lambda}$ be the character lattice of $T$, $\Lambda$ be the cocharacter lattice. For a $T$-torsor on some base and $\check{\lambda}\in\check{\Lambda}$ let $\cL^{\check{\lambda}}_{\cF_T}$ denote the line bundle obtained from $\cF_T$ via extension of scalars by $\check{\lambda}: T\to\Gm$. 

\sssec{} We have $V/V'\,\iso\, \oplus_{i\in\cI}  X_{\check{\alpha}_i}$ as representations of $T$. The stack $\Bun_{B/V'}$ classifies pairs $\cF_T\in\Bun_T$, and an exact sequence on $X$
\begin{equation}
\label{seq_for_2.1.3}
0\to \underset{i\in\cI}{\oplus} \cL^{\check{\alpha}_i}_{\cF_T}\to ?\to \cO_X\to 0
\end{equation}
The stack $\cY_T$ classifies $\cF_T\in\Bun_T$ and a section 
$$
s: \underset{i\in\cI}{\oplus} \cL^{\check{\alpha}_i}_{\cF_T}\to\Omega
$$ 
on $X$. Write $s_{\check{\alpha}_i}: \cL^{\check{\alpha}_i}_{\cF_T}\to\Omega$ for the corresponding component of $s$. The open substack $\cY^0_T\subset \cY_T$ is given by the property that for any $i\in\cI$, $s_{\check{\alpha}_i}\ne 0$.
 
\sssec{} Pick a closed point $y\in X$. The Hecke functors 
$$
\H^{\la}_y, \H^{\ra}_y: \Rep(\check{G})\times Shv(\Bun_G)\to Shv(\Bun_G)
$$ 
are defined as in (\cite{BG}, 3.2.4). 
This definition is also equivalent to the one from (\cite{FGV}, 5.3.6). Let $\Lambda^+$ be the set of dominant coweights. Let $\Gr_{G,y}$ denote the affine grassmanian of $G$ at $y$. For $\lambda\in\Lambda^+$ we denote by $\cA^{\lambda}_G$ the $\IC$-sheaf of $\ov{\Gr}_G^{\lambda}$ as in (\cite{BG}, 3.2.1). 

 Let $_y\cH_G$ be the Hecke stack classifying $\cF_G,\cF'_G\in\Bun_G$ and $\beta: \cF_G\,\iso\, \cF'_G\mid_{X-y}$. We have the diagram
$$
\Bun_G\getsup{h^{\la}} {_y\cH_G}\toup{h^{\ra}} \Bun_G,
$$
where $h^{\la}$ (resp., $h^{\ra}$) sends the above point to $\cF_G$ (resp., to $\cF'_G$). 
Let $\cG_y\to\Bun_G$ be the $G(\cO_y)$-torsor classifying $\cF_G\in\Bun_G$ and its trivialization over the formal disk $D_y$ around $y$. We have the isomorphisms 
$$
\id^l, \id^r: {_y\cH_G}\,\iso\, \cG_y\times^{G(\cO_y)} \Gr_{G, y}
$$
such that the projection to $\Bun_G$
corresponds to $h^{\la}, h^{\ra}:{_y\cH_G}\to\Bun_G$ respectively. For $\cT\in Shv(\Bun_G), \cS\in \Perv_{G(\cO_y)}(\Gr_{G,y})$ we have the corresponding twisted products $(\cT\tboxtimes\cS)^l, (\cT\tboxtimes\cS)^r$ normalized as in \select{loc.cit}. We then set
$$
\H^{\la}_y(\cS, \cT)=
\cS\ast \cT=h^{\ra}_!(\cT\tboxtimes (\ast\cS))^l
$$
and 
$$
\H^{\ra}_y(\cS, \cT)=
\cT\ast \cS=h^{\la}_!(\cT\tboxtimes \cS)^r
$$

 Here we denoted by $\ast$ the covariant self-functor on $\Perv_{G(\cO_y)}(\Gr_{G,y})$ coming from $G(\cO_y)\to G(\cO_y), g\mapsto g^{-1}$. 
 
 For $\lambda\in\Lambda^+$ let $_y\ov{\cH}_G^{\lambda}$ be the closed substack of $_y\cH_G$ which identifies under $\id^l$ with $\cG_y\times^{G(\cO_y)}\ov{\Gr}_G^{
\lambda}$. This notation agrees with that of (\cite{BG}, Section~3.2.4). For a point $(\cF_G,\cF'_G,\beta)\in {_y\ov{\cH}_G^{\lambda}}$ we say that $\cF'_G$ is in the position $\le\lambda$ with respect to $\cF_G$. This is equivalent to $\cF_G$ being in the position $\le -w_0(\lambda)$ with respect to $\cF'_G$. 
 
\sssec{} Let $K\in \cF_{\OO}$ and $\lambda\in\Lambda^+$. We will show that $\cA^{\lambda}_G\ast K\in \cF_{\OO}$. 

\sssec{} Pick a $k$-point of $\cY^0_T$ given by $(\cF'_T, s')$. We check that the $*$-fibre of 
$$
\Four(\gp_!\gq^*(\cA^{\lambda}_G\ast K))
$$ 
at this point vanishes. Our argument will be compatible with the field extensions $k\to k'$, so this is sufficient.

Let $\Lambda_{G_{ad}}^+$ be the set of dominant coweights for $G_{ad}=G/Z(G)$. Let 
$$
cond(s')=\sum_{z\in X} cond(s')_z
$$ 
be the $\Lambda_{G_{ad}}^+$-valued divisor on $X$ given by the property: if $i\in\cI$ then $\<cond(s')_z,\check{\alpha}_i\>$  is the order of zero of $s'_{\check{\alpha}_i}: \cL^{\check{\alpha}_i}_{\cF_T}\hook{}\Omega$ at $z$.

\sssec{} 
Let $\Bun_U^{\cF'_T}$ be the stack classifying $\cF'_B\in\Bun_B$ and an isomorphism $\gamma': {\cF'_B\times_B T}\iso \cF'_T$. Denote by $\ev_{s'}: \Bun_U^{\cF'_T}\to\Ga$ the composition
$$
\Bun_U^{\cF'_T}\to \prod_{i\in\cI} \H^1(X, \cL^{\check{\alpha}_i}_{\cF'_T})\toup{s'}\prod_{i\in\cI} \H^1(X, \Omega)\,\iso\, \prod_{i\in\cI} \Ga\toup{\sum}\Ga
$$
Let $q': \Bun_U^{\cF'_T}\to\Bun_G$ be the natural map. It suffices to show that 
\begin{equation}
\label{complex_for_2.1.7}
\RG_c(\Bun_U^{\cF'_T}, \ev_{s'}^*\cL_{\psi}\otimes q'^*(\cA^{\lambda}_G\ast K))=0
\end{equation}
\sssec{} 
\label{Sect_3.1.8_for_version2}
We claim that (\ref{complex_for_2.1.7}) identifies canonically with
\begin{equation}
\label{complex_for_2.1.8_left_to_right}
\RG_c(\Bun_G, K\otimes ((q'_!\ev_{s'}^*\cL_{\psi})\ast \cA^{\lambda}_G))
\end{equation}

 Indeed, to see this we may assume $K$ constructible. Then (\ref{complex_for_2.1.7}) identifies with
$$
\RG_c(\Bun_G, (\cA^{\lambda}_G\ast K)\otimes q'_! \ev_{s'}^*\cL_{\psi})\,\iso\,\DD\R\Hom(\cA^{\lambda}_G\ast K, q'_*\ev_{s'}^!\cL_{\psi^{-1}}[2])
$$
By (\cite{FGV}, 5.3.9), the functor $Shv(\Bun_G)\to Shv(\Bun_G), L\mapsto \cA^{\lambda}_G\ast L$ is left adjoint to the functor $L\mapsto L\ast \cA^{\lambda}_G$. So, (\ref{complex_for_2.1.7}) identifies with 
$$
\DD\R\Hom(K, (q'_*\ev_{s'}^!\cL_{\psi^{-1}}[2])\ast \cA^{\lambda}_G),
$$
which in turn identifies with (\ref{complex_for_2.1.8_left_to_right}) as desired.

\sssec{} Define the stack $\Bunb_U^{\cF'_T}$ as in \cite{FGV}. More precisely, if $[G,G]$ is simply-connected this is exactly the definition from \cite{FGV}. In this case it classifies $\cF'_G\in\Bun_G$ and a collection of nonzero maps over $X$
$$
\kappa^{\check{\lambda}}: \cL^{\check{\lambda}}_{\cF'_T}\hook{} \cV^{\check{\lambda}}_{\cF'_G}, \; \check{\lambda}\in\check{\Lambda}^+
$$
subject to the Pl\"ucker relations. Here $\check{\Lambda}^+$ is the set of dominant weights for $G$, and $\cV^{\check{\lambda}}$ denotes the Weyl module coresponding to $\check{\lambda}$ (cf. \cite{FGV}, Section~1.4). 

 Let also $_{y,\infty}\Bunb_U^{\cF'_T}$ be the version of $\Bunb_U^{\cF'_T}$, where the maps $\kappa^{\check{\lambda}}$ are allowed to have any poles at $y$. 

If $[G,G]$ is not simply-connected, one changes the definition of $\Bunb_U^{\cF'_T}$ as is explained in Schieder's paper (\cite{Sch}, Section~7) and similarly for $_{y,\infty}\Bunb_U^{\cF'_T}$. 
 
\sssec{}  As in (\cite{FGV}, Section~5.3), one defines the Hecke action of $\Perv_{G(\cO_y)}(\Gr_{G,y})$ on $Shv(_{y,\infty}\Bunb_U^{\cF'_T})$. So, for $\cS\in \Perv_{G(\cO_y)}(\Gr_{G,y})$, $\cT\in Shv(_{y,\infty}\Bunb_U^{\cF'_T})$ we get the objects $\cT\ast \cS$ and $\cS\ast\cT$ in $Shv(_{y,\infty}\Bunb_U^{\cF'_T})$. 

\sssec{} Let $j: \Bun_U^{\cF'_T}\to {_{y,\infty}\Bunb_U^{\cF'_T}}$ be the embedding.
Let $\bar q': {_{y,\infty}\Bunb_U^{\cF'_T}}\to\Bun_G$ be the projection. Since 
$$
\bar q'_!: Shv(_{y,\infty}\Bunb_U^{\cF'_T})\to Shv(\Bun_G)
$$ 
commutes with the actions of the Hecke functors at $y$, (\ref{complex_for_2.1.8_left_to_right}) identifies with   
\begin{equation}
\label{complex_for_referee_from_left_to_right}
\RG_c(\Bun_G, K\otimes \bar q'_!((j_!\ev_{s'}^*\cL_{\psi})\ast \cA^{\lambda}_G))\,\iso\, \RG_c({_{y,\infty}\Bunb_U^{\cF'_T}}, (\bar q')^*K\otimes ((j_!\ev_{s'}^*\cL_{\psi})\ast \cA^{\lambda}_G))
\end{equation}

\sssec{} For $\nu\in\Lambda$ let $S^{\nu}\subset \Gr_G$ denote the $U(F)$-orbit through $t^{\nu}\in\Gr_G$, here $F$ is our local field (cf. \cite{FGV}, Section~7.1.1).

\sssec{} For $\nu\in\Lambda$ let $_{y,\nu}\Bunt_U^{\cF'_T}\subset {_{y,\infty}\Bunb_U^{\cF'_T}}$ be the locally closed substack given by the property (in the case of $[G,G]$ simply-connected) that for any $\check{\lambda}\in\check{\Lambda}^+$, the map
$$
\cL^{\check{\lambda}}_{\cF'_T}(-\<\nu, \check{\lambda}\>y)\to \cV^{\check{\lambda}}_{\cF'_G}
$$
are regular over $X$ and have no zeros at $y$. (We leave it to a reader to define an analog of this stack for $G$ arbitrary reductive). 

 The stacks $_{y,\nu}\Bunt_U^{\cF'_T}$ for $\nu\in\Lambda$ form a stratification of $_{y,\infty}\Bunb_U^{\cF'_T}$.
 
\sssec{} Fix $\nu\in\Lambda$. It suffices to show that the contribution of the stratum $_{y,\nu}\Bunt_U^{\cF'_T}$ to integral (\ref{complex_for_referee_from_left_to_right}) vanishes.

 Set $\cF_T=\cF'_T(-\nu y)$. We have the open immersion $\Bun_U^{\cF_T}\hook{} {_{y,\nu}\Bunt_U^{\cF'_T}}$. By construction, the $*$-restriction of $(j_!\ev_{s'}^*\cL_{\psi})\ast \cA^{\lambda}_G$ to ${_{y,\nu}\Bunt_U^{\cF'_T}}$ is the extension by zero from $\Bun_U^{\cF_T}$. 

\sssec{} Let $Z$ be the stack classifying $(\cF'_B,\gamma')\in\Bun_U^{\cF'_T}$, for which we set $\cF'_G=\cF'_B\times_B G$, and $(\cF'_G, \cF_G,\beta)\in {_y\cH_G}$ such that $\cF_G$ is in the position $\le\lambda$ with respect to $\cF'_G$. So, 
$$
Z={_y\ov{\cH}^{-w_0(\lambda)}_G}\times_{\Bun_G} \Bun_U^{\cF'_T},
$$
where we used the map $h^{\ra}$ to form the fibred product. 

 Pick a trivialization $\epsilon: \cF'_T\mid_{D_y}\,\iso\,\cF^0_T\mid_{D_y}$, where $\cF^0_T$ denotes the trivial $T$-torsor. This gives rise to a $U(\cO_y)$-torsor denoted $\cU^{\epsilon}$ over $\Bun_U^{\cF'_T}$, it classifies trivializations $\cF'_B\,\iso\, \cF^0_B\mid_{D_y}$ inducing $\cF'_B\times_B T\toup{\epsilon\gamma'} \cF^0_T\mid_{D_y}$. 
 
  We have a projection $h^{\ra}_Z: Z\to \Bun_U^{\cF'_T}$ sending the above point to $(\cF'_B,\gamma')$. It realizes $Z$ as a fibration 
\begin{equation}
\label{fibration_for_Sect_2.1.8}
\cU^{\epsilon}\times^{U(\cO_y)} \ov{\Gr}_G^{\lambda}\,\iso\, Z
\end{equation}

 Let $Z^{\nu}\subset Z$ be the substack that identifies via $h^{\ra}_Z$ with 
$$
\cU^{\epsilon}\times^{U(\cO_y)} (\ov{\Gr}_G^{\lambda}\cap S^{\nu})
$$ 
 
 The stack $Z^{\nu}$ classifies $(\cF'_B,\gamma', \cF_G, \beta)\in Z$ such that $\cF'_B$ induces a $B$-structure on $\cF_G$, which we denote as $\cF_B$, and such that the isomorphism $\gamma': \cF_B\times_B T\,\iso\, \cF'_T\mid_{X-y}$ extends to an isomorphism $\gamma: \cF_B\times_B T\,\iso\, \cF_T$ on $X$.
  
 Let $h^{\la}_Z: Z^{\nu}\to \Bun_U^{\cF_T}$ be the map that sends the above point to $(\cF_B,\gamma)$. 
 
\sssec{} The $*$-restriction of $(j_!\ev_{s'}^*\cL_{\psi})\ast \cA^{\lambda}_G$ under $\Bun_U^{\cF_T}\hook{} {_{y,\infty}\Bunb_U^{\cF'_T}}$ identifies with
\begin{equation}
\label{complx_for_2.1.13}
(h^{\la}_Z)_!(\cA^{\lambda}_G\mid_{\ov{\Gr}_G^{\lambda}\cap S^{\nu}}\tboxtimes \ev_{s'}^*\cL_{\psi})^r,
\end{equation}
here $\cA^{\lambda}_G\mid_{\ov{\Gr}_G^{\lambda}\cap S^{\nu}}$ denotes the $*$-restriction.

 Consider the projection $\pr: \Lambda\to\Lambda_{G_{ad}}$. If for any $i\in\cI$, 
\begin{equation}
\label{condition_conductor_for_Sect_2.1.14}
\<cond(s')_y+\pr(\nu), \check{\alpha}_i\>\ge 0
\end{equation} 
then we get a point $(\cF_T, s)\in \cY^0_T$. Namely, for $i\in\cI$ the datum of $s'$ gives rise to an inclusion $s_{\check{\alpha}_i}: \cL^{\check{\alpha}}_{\cF_T}\hook{}\Omega$. We then have the morphism $\ev_s: \Bun_U^{\cF_T}\to \Ga$ defined along the same lines as $\ev_{s'}$.

\begin{Lm} 
\label{Lm_2.1.14}
If for any $i\in\cI$ the condition (\ref{condition_conductor_for_Sect_2.1.14}) holds then (\ref{complx_for_2.1.13}) is isomorphic to $
\ev_s^*\cL_{\psi}\otimes M$ for some constant complex $M\in Shv(\Spec k)$. Otherwise, 
 (\ref{complx_for_2.1.13}) vanishes.
\end{Lm}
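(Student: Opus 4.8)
\textbf{Proof strategy for Lemma~\ref{Lm_2.1.14}.}
The plan is to compute the complex (\ref{complx_for_2.1.13}) directly by analyzing the geometry of the intersection $\ov{\Gr}_G^{\lambda}\cap S^{\nu}$ together with the behaviour of the evaluation map $\ev_{s'}$ under the Hecke modification. First I would recall the well-known fact (Mirkovi\'c--Vilonen) that $\ov{\Gr}_G^{\lambda}\cap S^{\nu}$ is nonempty exactly when $\nu$ is a weight of the representation $V^{\lambda}$, i.e. $\nu\le\lambda$ and $\lambda-\nu$ lies in the root lattice, and that in that case it is of pure dimension $\<\lambda+\nu,\check\rho\>$; moreover $\cA_G^{\lambda}|_{\ov{\Gr}_G^{\lambda}\cap S^{\nu}}$ is (up to shift) the constant sheaf, its top cohomology computing the $\nu$-weight space of $V^{\lambda}$. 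The twisted product in (\ref{complx_for_2.1.13}) is formed along the fibration (\ref{fibration_for_Sect_2.1.8}), so the key point is to understand how the additive character $\ev_{s'}$ restricts along $h^{\la}_Z$: on the fibre over a point of $\Bun_U^{\cF_T}$ the character $\ev_{s'}$ differs from $\ev_s\comp h^{\la}_Z$ by a ``local at $y$'' term coming from the modification by an element of $S^{\nu}$.

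The main computation, then, is local at $y$. I would trivialize $\cF'_T$ near $y$ via $\epsilon$ as in the text, so that the Hecke modification is governed by a point of $\ov{\Gr}_G^{\lambda}\cap S^{\nu}$; the $U(F)$-orbit structure of $S^{\nu}$ shows that such a modification changes each line bundle $\cL^{\check\alpha_i}_{\cF'_T}$ into $\cL^{\check\alpha_i}_{\cF_T}=\cL^{\check\alpha_i}_{\cF'_T}(-\<\nu,\check\alpha_i\>y)$ and alters the local expansion of $s'_{\check\alpha_i}$ by the corresponding component of the $U(F)$-parameter. When (\ref{condition_conductor_for_Sect_2.1.14}) holds for all $i$, i.e. $\<cond(s')_y+\pr(\nu),\check\alpha_i\>\ge 0$, the section $s'_{\check\alpha_i}$ descends to an honest inclusion $s_{\check\alpha_i}:\cL^{\check\alpha_i}_{\cF_T}\hook{}\Omega$ with no new pole, and the character $\ev_{s'}$ on the fibre becomes $\ev_s\comp h^{\la}_Z$ plus a character on the affine space $\ov{\Gr}_G^{\lambda}\cap S^{\nu}$ (or rather its open cell) that is \emph{trivial} precisely because the relevant pairing of $s'$ with the modification lands in $\H^1(X,\Omega)$-classes that are already accounted for; hence the integral over the fibre factors as $\ev_s^*\cL_{\psi}$ tensored with $\RG_c(\ov{\Gr}_G^{\lambda}\cap S^{\nu}, \cA_G^{\lambda}|_{\cdots})$, a constant complex $M$. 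When (\ref{condition_conductor_for_Sect_2.1.14}) fails for some $i$, the induced character on (an affine-space slice of) $\ov{\Gr}_G^{\lambda}\cap S^{\nu}$ is a \emph{nontrivial} additive character — the failure of the conductor inequality forces a genuine dependence of $\ev_{s'}$ on the modification parameter along a $\Ga$-direction — and then $\RG_c$ of $\cL_{\psi}$ against a nonconstant linear function on affine space vanishes, killing (\ref{complx_for_2.1.13}).

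Concretely the steps are: (i) use (\ref{fibration_for_Sect_2.1.8}) and base change to reduce (\ref{complx_for_2.1.13}) to an integral over $\cU^{\epsilon}\times^{U(\cO_y)}(\ov{\Gr}_G^{\lambda}\cap S^{\nu})$ fibred over $\Bun_U^{\cF_T}$; (ii) identify the restricted character $\ev_{s'}$ on this fibration as $\ev_s$ pulled back from the base plus a ``discrepancy'' character $\chi_{\nu}$ on the Schubert-cell slice of $\ov{\Gr}_G^{\lambda}\cap S^{\nu}$, reading off $\chi_{\nu}$ from the explicit $U(F)$-action on the Pl\"ucker coordinates; (iii) show $\chi_{\nu}$ is trivial iff (\ref{condition_conductor_for_Sect_2.1.14}) holds for every $i\in\cI$ — this is where the conductor bookkeeping of Section~\ref{Sect_3.1.8_for_version2} enters; (iv) conclude by the projection formula and the basic vanishing $\RG_c(\Ga,\cL_{\psi})=0$ in the nontrivial case, and by $\ev_s$ factoring out together with $M:=\RG_c(\ov{\Gr}_G^{\lambda}\cap S^{\nu},\cA_G^{\lambda}|_{\ov{\Gr}_G^{\lambda}\cap S^{\nu}})$ in the trivial case. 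The main obstacle I anticipate is step (ii)–(iii): pinning down exactly which component of the $U(F)$-parameter the character $\ev_{s'}$ sees, and checking that the resulting linear functional on the affine cell is nonzero in precisely the complementary range to (\ref{condition_conductor_for_Sect_2.1.14}). This requires a careful local (Laurent-expansion at $y$) analysis of the pairing between the section $s'$ and the Hecke modification, in the spirit of the computations in (\cite{FGV}, Sections~5 and 7); for $G$ with $[G,G]$ not simply-connected one must also track Schieder's correction to $\Bunb_U^{\cF'_T}$, but this affects only indexing by $\Lambda_{G_{ad}}$ and not the essential dichotomy.
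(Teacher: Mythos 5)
Your strategy is genuinely different from the paper's, and it has both a factual error and an unresolved core step.

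The paper does not attempt a direct computation of the twisted-product pushforward (\ref{complx_for_2.1.13}). Instead it proceeds categorically: following \cite{Ga1}, it introduces a Whittaker $\DG$-subcategory $Shv^W({}_{y,\infty}\Bunb_U^{\cF'_T})$ cut out by an equivariance condition against a groupoid built from $V/V'$-modifications away from $y$ twisted by $\chi=s'\circ\Res$. One checks (Proposition~\ref{Pp_3.2.9}, sketched here as the observation at the end of Section~\ref{Sect_2.1.16}) that the Hecke action at $y$ preserves $Shv^W$, and that the $*$-restrictions along groupoid-stable locally closed substacks, in particular to $\Bun_U^{\cF_T}$, send $Shv^W$ to $Shv^W$. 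Hence (\ref{complx_for_2.1.13}) automatically lies in $Shv^W(\Bun_U^{\cF_T})$. The lemma then reduces to a \emph{classification} statement, Lemma~\ref{Lm_2.1.17}, which is the analog of (\cite{FGV}, Lemma~6.2.8): if some $i$ violates (\ref{condition_conductor_for_Sect_2.1.14}) the category $Shv^W(\Bun_U^{\cF_T})$ is zero, and otherwise every object is $\ev_s^*\cL_\psi\otimes M$. This avoids any explicit Laurent-expansion analysis of the character on the Schubert slice: the conductor dichotomy is absorbed into a structural fact about the Whittaker category on the fixed stack $\Bun_U^{\cF_T}$, independent of $\lambda$ and $\nu$.

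Your direct approach faces two problems. First, the claim that $\cA_G^\lambda|_{\ov{\Gr}_G^\lambda\cap S^\nu}$ is (up to shift) the constant sheaf is false in general: the IC sheaf of $\ov{\Gr}_G^\lambda$ has nonconstant stalks along boundary strata, which do meet $S^\nu$; the Mirkovi\'c--Vilonen theorem only says that $\RG_c$ of this restriction is concentrated in the top degree $\langle\lambda+\nu,\check\rho\rangle$ and computes the weight multiplicity. This is harmless for the paper's proof (which never needs the restriction to be constant, only $G(\cO_y)$-equivariant) but it undermines your step (iv) as written. Second and more seriously, you explicitly flag steps (ii)--(iii) — identifying the discrepancy character on the Schubert-cell slice and proving it is nontrivial exactly when (\ref{condition_conductor_for_Sect_2.1.14}) fails — as the main obstacle and do not resolve it. That computation is precisely the content that the paper's reformulation (compute nothing; observe that the answer lies in $Shv^W$, whose objects are rigid) is designed to bypass. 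In particular, because the twisted product is formed over a $U(\cO_y)$-torsor that is nontrivial over $\Bun_U^{\cF_T}$, there is no global splitting of $\ev_{s'}$ into ``$\ev_s$ pulled back from the base plus a character on the fibre''; one can only do this after choosing a local trivialization $\epsilon$, and then one must track the $U(\cO_y)$-equivariance to descend. The Whittaker-category mechanism handles this equivariance automatically, whereas your outline would need to reconstruct it by hand. So while a direct computation in the spirit of (\cite{FGV}, Sections 5 and 7) could plausibly be carried out for this even (indeed regular) nilpotent, your proposal as given does not constitute a proof.
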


 The proof of Lemma~\ref{Lm_2.1.14} is given in Section~\ref{Sect_2.1.16}. 

\sssec{End of the proof of Theorem~\ref{Th_preservation_Hecke} for $x$ regular} Let $q: \Bun_U^{\cF_T}\to\Bun_G$ be the natural map. By Lemma~\ref{Lm_2.1.14},
the contribution of the substack $\Bun_U^{\cF_T}$ in (\ref{complex_for_referee_from_left_to_right}) identifies with
$$
\RG_c(\Bun_U^{\cF_T}, q^*K\otimes \ev_s^*\cL_{\psi})
$$
tensored by some constant complex. The latter cohomology vanishes
by definition of $\cF_{\OO}$. We are done. \QED
 
\sssec{Proof of Lemma~\ref{Lm_2.1.14}}
\label{Sect_2.1.16} A detailed argument for any $x$ even is given in Section~\ref{Sect_3} below. Here we only give a sketch.

As in (\cite{Ga1}, Section~4), one defines the Whittaker category $Shv^W(_{y,\infty}\Bunb_U^{\cF'_T})$ corresponding to $s'$. Namely, first one defines a full Serre abelian subcategory 
$$
Shv^W(_{y,\infty}\Bunb_U^{\cF'_T})^{\heartsuit}\subset Shv(_{y,\infty}\Bunb_U^{\cF'_T})^{\heartsuit}
$$ 
It is singled out by an equivariance condition under some groupoid. More precisely, an object $K\in Shv(_{y,\infty}\Bunb_U^{\cF'_T})^{\heartsuit}$ by definition lies in $Shv^W(_{y,\infty}\Bunb_U^{\cF'_T})^{\heartsuit}$ if for any constructible perverse subsheaf $K'\subset K$, $K'$ is equivariant under the corresponding groipoid. Our appoach to defining $Shv^W(_{y,\infty}\Bunb_U^{\cF'_T})^{\heartsuit}$ is justified by
Remark~\ref{Rem_2.1.18} below.

Now $Shv^W(_{y,\infty}\Bunb_U^{\cF'_T})^{\heartsuit}$ is presentable and closed under colimits in $Shv(_{y,\infty}\Bunb_U^{\cF'_T})^{\heartsuit}$. Define 
\begin{equation}
\label{subcat_W_for_proof_of_Lm_3.1.14}
Shv^W(_{y,\infty}\Bunb_U^{\cF'_T})\subset Shv(_{y,\infty}\Bunb_U^{\cF'_T})
\end{equation} 
as the full subcategory of those objects whose all perverse cohomology sheaves lie in $Shv^W(_{y,\infty}\Bunb_U^{\cF'_T})^{\heartsuit}$. Then $Shv^W(_{y,\infty}\Bunb_U^{\cF'_T})\in\DGCat_{cont}$, this category is equipped with a t-structure induced from that on $Shv(_{y,\infty}\Bunb_U^{\cF'_T})$, so that the embedding (\ref{subcat_W_for_proof_of_Lm_3.1.14}) is continuous and t-exact.

The Hecke action of $\Perv_{G(\cO_y)}(\Gr_{G,y})$ on $Shv(_{y,\infty}\Bunb_U^{\cF'_T})$ preserves the full subcategory $Shv^W(_{y,\infty}\Bunb_U^{\cF'_T})$. 

For any locally closed substack $\cZ\subset {_{y,\infty}\Bunb_U^{\cF'_T}}$ stable under the corresponding groupoid, we may similarly define the version $Shv^W(\cZ)$ of the Whittaker category for $s'$. In particular, this holds for $\cZ=\Bun_U^{\cF_T}$, and we get $Shv^W(\Bun_U^{\cF_T})$. 

By the above, (\ref{complx_for_2.1.13}) is an object of $Shv^W(\Bun_U^{\cF_T})$. Our claim follows from Lemma~\ref{Lm_2.1.17} below.
\QED

\begin{Lm}
\label{Lm_2.1.17}
 If there is $i\in\cI$ such that (\ref{condition_conductor_for_Sect_2.1.14}) does not hold then $Shv^W(\Bun_U^{\cF_T})$ vanishes. Otherwise, any object of $Shv^W(\Bun_U^{\cF_T})$ is isomorphic to $\ev_s^*\cL_{\psi}\otimes M$ for some constant complex $M\in Shv(\Spec k)$. 
\end{Lm}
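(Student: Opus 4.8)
The plan is to reduce, using the lower central series of $U$, to an abelian situation that splits over the Dynkin diagram, and there to conclude by a Fourier transform along $\H^1$ together with the elementary fact that a $\cL_\psi$-equivariant sheaf for a trivially acting $\Ga$ must vanish (since $\cL_\psi$ is a nontrivial local system), i.e. $Shv^{(\Ga,\cL_\psi)}(\Spec k)=0$.

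Recall (from \cite{Ga1}, and in the spirit of \cite{FGV}) that $Shv^W(\Bun_U^{\cF_T})$ is cut out by equivariance under a groupoid which is the gauge/Hecke action at $y$ along $\gu$, against the Whittaker character built from $s'$ (a sum of residue pairings $\Res_y(s_{\check{\alpha}_i}\cdot-)$ over $i\in\cI$). Since this character factors through $\gu^{ab}:=\gu/[\gu,\gu]$, in the fibre directions of the tower
$$
\Bun_U^{\cF_T}\to\cdots\to\Bun_{U/U_3}^{\cF_T}\to\Bun_{U^{ab}}^{\cF_T}=:\cA
$$
($U_k$ the lower central series of $U$) the Whittaker condition is plain equivariance; the successive fibres have underlying complexes $\RG(X,(U_k/U_{k+1})_{\cF_T})[1]$, i.e. are of the form $\A^a\times B\A^b$, and along such morphisms $\ast$-pushforward is an equivalence of the associated equivariant categories (contraction principle). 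This yields $Shv^W(\Bun_U^{\cF_T})\iso Shv^W(\cA)$. As $\cA=\bigoplus_{i\in\cI}\Bun_{\VV(\cL^{\check{\alpha}_i}_{\cF_T})}$ and the Whittaker datum is a sum over the vertices, Künneth gives $Shv^W(\cA)\iso\bigotimes_{i\in\cI}Shv^W(\Bun_{\VV(\cL^{\check{\alpha}_i}_{\cF_T})})$, so it suffices to treat a single vertex $i$; write $\cL_i:=\cL^{\check{\alpha}_i}_{\cF_T}$.

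The group-stack $\Bun_{\VV(\cL_i)}$ has underlying complex $\RG(X,\cL_i)[1]$, so $\Bun_{\VV(\cL_i)}\iso\H^1(X,\cL_i)\times B\,\H^0(X,\cL_i)$ as affine group-stacks, and $\ev_s$ restricted to this factor is the homomorphism $\ev_{s,i}\colon\Bun_{\VV(\cL_i)}\to\Ga$ induced by $s_{\check{\alpha}_i}\colon\cL_i\to\Omega$, namely (via $\H^1(X,\Omega)\iso e$) the pairing of the $\H^1$-component with the class of $s_{\check{\alpha}_i}$ in $\H^1(X,\cL_i)^\vee=\H^0(X,\Omega\otimes\cL_i^{-1})$; in particular it is trivial on $B\,\H^0(X,\cL_i)$. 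Now unwind which vertex-$i$ piece of the groupoid survives restriction to $\Bun_U^{\cF_T}$. If condition (\ref{condition_conductor_for_Sect_2.1.14}) holds for $i$ — equivalently $s'_{\check{\alpha}_i}$ descends to a regular map $s_{\check{\alpha}_i}\colon\cL_i\hook{}\Omega$, equivalently $\ev_{s,i}$ is a nonzero, hence surjective, linear functional on $\H^1(X,\cL_i)$ — then contracting away the $B\,\H^0(X,\cL_i)$-equivariance (trivial character) reduces us to sheaves $\cF$ on the vector space $\H^1(X,\cL_i)$ with $\cF\otimes\ev_{s,i}^*\cL_\psi$ translation-invariant; these are exactly $\ev_{s,i}^*\cL_\psi\otimes M$ for constant $M$ (the $\cL_\psi$-versus-$\cL_\psi^{-1}$ normalisation being that of the character), so $Shv^W(\Bun_{\VV(\cL_i)})\iso\Vect$. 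If (\ref{condition_conductor_for_Sect_2.1.14}) fails for $i$, then $s'_{\check{\alpha}_i}$ forces a genuine pole at $y$, and the restricted groupoid imposes $(\Ga,\cL_\psi)$-equivariance for a $\Ga$ acting trivially on $\Bun_U^{\cF_T}$ (the standard ``wrong-degree'' vanishing for Whittaker categories, cf. \cite{FGV}); hence $Shv^W(\Bun_U^{\cF_T})=0$.

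Multiplying over $i\in\cI$ and transporting back up the tower gives the statement: $Shv^W(\Bun_U^{\cF_T})$ vanishes if (\ref{condition_conductor_for_Sect_2.1.14}) fails for some $i$, and otherwise every object is $\ev_s^*\cL_\psi\otimes M$ with $M\in Shv(\Spec k)$ constant. The routine ingredients are the Künneth decomposition, the contraction principle, and the rank-one Fourier transform. The point demanding care — carried out in full for arbitrary even $x$ in Section~\ref{Sect_3} — is the bookkeeping of the Whittaker groupoid: verifying that it descends compatibly along the lower-central-series tower (so that the contraction principle applies), identifying its restriction to each $\Bun_{\VV(\cL_i)}$, and establishing the ``wrong-degree'' vanishing when (\ref{condition_conductor_for_Sect_2.1.14}) fails, which is the source of the dichotomy in the statement.
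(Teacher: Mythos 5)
Your proposal is correct and matches the approach of (\cite{FGV}, Lemma~6.2.8), which is precisely what the paper invokes as its one-line proof of this lemma. The ingredients you use — contraction along the lower central series of $U$ down to the abelianization, K\"unneth decomposition of the $W$-category over the simple roots, and the rank-one Fourier-transform/residue dichotomy at each vertex (with the residue theorem supplying the trivially acting $\Ga$ with nontrivial character when (\ref{condition_conductor_for_Sect_2.1.14}) fails) — are exactly the ones in \emph{loc.\ cit.}
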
 
\begin{proof}
This is analogous to (\cite{FGV}, Lemma~6.2.8). 
\end{proof}

\begin{Rem} 
\label{Rem_2.1.18}
Recall that for a classical algebraic stack locally of finite type, $\Perv(\cY)$ is defined as $(Shv(\cY)^{constr})^{\heartsuit}$. If $\cY$ is moreover of finite type, then the natural functor $\Ind(\Perv(\cY))\to Shv(\cY)^{\heartsuit}$ is an equivalence. However, this is not true in general for $\cY$ locally of finite type. In the latter case an object of $\Perv(\cY)$ is not necessarily compact in $Shv(\cY)^{\heartsuit}$. This happens for example for $\cY=\sqcup_{j\in\ZZ} \Spec k$.
\end{Rem}

\ssec{Minimal nilpotent orbit}

\sssec{} Assume $\gg$ simple. Let $\check{\alpha}$ be the highest root of $T$. Pick a nonzero element $x\in X_{\check{\alpha}}$ in the corresponding root subspace. Let $\OO=\OO_x$ be the nilpotent $G$-orbit through $x$. This is the minimal nilpotent orbit (\cite{CM}, Theorem~4.3.3). Let $P\subset G$ be the parabolic such that $\Lie P=\gp=\oplus_{i\ge 0}\gg_i$ is the Jacobson-Morozov parabolic subalgebra of $\gg$. It is known that $\gg_i=0$ for $\mid i\mid>2$. Recall that $U\subset P$ denotes the unipotent radical of $P$, and $V\subset U$ the subgroup with $\Lie V=\gg_2$ in this case. The group $P$ is called the Heisenberg parabolic of $G$. It is known that $U$ is a Heisenberg group with the center $V$, and $\dim\gg_2=1$. Let $M$ be the Levi of $P$ with $\Lie M=\gg_0$.
 
  Since $V$ is a 1-dimensional linear representation of $M$, the open $M\times\Gm$-orbit on $V^*$ is $V^*-\{0\}$.

\section{Case of even $x$}
\label{Sect_3}

\ssec{} In Section~\ref{Sect_3} we assume $x$ even and prove Theorem~\ref{Th_preservation_Hecke} in this case. Keep notations of Section~\ref{Sect_1.2}.  

\sssec{} If $[G,G]$ is simply connected, one has the stack $\Bunt_P$ defined in (\cite{BFGM}, Section~1). If $[G,G]$ is not simply-connected one modifies the definition of $\Bunt_P$ as is outlined in (\cite{Sch}, Section~7.4).   

\sssec{} 
\label{Sect_3.1.1}
Pick a maximal torus and Borel subgroups $T\subset B_M\subset M$  such that $H\in \Lie T$. This yields a Borel subgroup $B\subset P$, which is the preimage of $B_M$ under $P\to M$. Let $\cI_M\subset \cI$ be the subset corresponding to simple roots of $M$. Let $\Lambda_{G,P}$ denote the quotient of $\Lambda$ be the subgroup generated by $\alpha_i, i\in \cI_M$. This is the algebraic fundamental group of $M$. 
 Let $\check{\Lambda}_{G,P}$ be the lattice dual to $\Lambda_{G,P}$. Let $\check{\Lambda}^+_{G,P}\subset \Lambda_{G,P}$ be the subset of those weights, which are dominant for $B$.

\sssec{} Pick a closed point $y\in X$, our Hecke functors are applied at $y\in X$. Let $K\in\cF_{\OO}$, $\lambda\in\Lambda^+$. We want to show that $\cA^{\lambda}_G\ast K\in \cF_{\OO}$.

\sssec{} Pick a $k$-point of $\cY^0_M$ given by $(\cF'_M, s')$, where $s': (V/V')_{\cF'_M}\to\Omega$. We check that the $*$-fibre of $\Four(\gp_!\gq^*(\cA^{\lambda}_G\ast K))$ at this point vanishes. Our argument will be compatible with field extensions $k\to k'$ as in Section~\ref{Sect_Examples}, so this is sufficient.

 Recall that $U=V$. Let $\Bun_U^{\cF'_M}$ be the stack classifying $\cF'_P\in\Bun_P$ and an isomorphism $\gamma': \cF'_P\times_P M\,\iso\, \cF'_M$. Let $\Bun_{V/V'}^{\cF'_M}$ be the stack classifying a $P/V'$-torsor $\cF'_{P/V'}$ together with an isomorphism $\cF'_{P/V'}\times_{P/V'} M\,\iso\, \cF'_M$. So, $\Bun_{V/V'}^{\cF'_M}$ classifies exact sequences 
\begin{equation}
\label{seq_for_3.1.3}
0\to (V/V')_{\cF'_M} \to ?\to \cO_X\to 0
\end{equation} 
on $X$. 
 
 Let $\ev_{s'}: \Bun_U^{\cF'_M}\to\Ga$ be the composition $\Bun_U^{\cF'_M}\to \Bun_{V/V'}^{\cF'_M}\to\Ga$, where the second map is the pairing of (\ref{seq_for_3.1.3}) with $s'$. 
 
  Let $q': \Bun_U^{\cF'_M}\to\Bun_G$ be the natural map. It suffices to show that
\begin{equation}
\label{integral_one_for_Sect_3.1.3}
\RG_c(\Bun_U^{\cF'_M}, \ev_{s'}\cL_{\psi}\otimes q'^*(\cA^{\lambda}_G\ast K))=0
\end{equation}  

\sssec{} As in Section~\ref{Sect_3.1.8_for_version2}, the object (\ref{integral_one_for_Sect_3.1.3}) identifies canonically with
\begin{equation}
\label{Sect_complex_for_Sect_4.1.5_v2}
\RG_c(\Bun_G, K\otimes ((q'_!\ev^*_{s'}\cL_{\psi})\ast \cA^{\lambda}_G))
\end{equation}

\sssec{} 
\label{Sect_4.1.6_for_v2}
Let $\Bunt_U^{\cF'_M}$ denote the version of $\Bunt_P$, where we fix the $M$-torsor to be $\cF'_M$. If $[G,G]$ is simply-connected then, for a test scheme $S$ of finite type, an $S$-point of $\Bunt_U^{\cF'_M}$ is given by $\cF'_G$ on $X\times S$ and a collection of  maps of coherent sheaves 
$$
\kappa^{\cV}: (\cV^U)_{\cF'_M}\to \cV_{\cF'_G}
$$
on $X\times S$ for every $G$-module $\cV$, such that for any $k$-point $s\in S$, its restriction $\kappa^{\cV}_s$ to $X\times s$ is injective, and Pl\"ucker relations hold (\cite{BFGM}, Section~1.1). If $[G,G]$ is not simply-connected one corrects the above definition as in (\cite{Sch}, Section~7.4).

 Denote by $_{y,\infty}\Bunt_U^{\cF'_M}$ the version of $\Bunt_U^{\cF'_M}$, where the maps $\kappa^{\cV}$ are allowed to have any poles at $y$. If $[G,G]$ is not simply-connected, one defines $_{y,\infty}\Bunt_U^{\cF'_M}$ analogously.
 
\sssec{} As in (\cite{BG}, Section~4.1.4), one defines the Hecke action action of $\Perv_{G(\cO_y)}(\Gr_{G,y})$ on $Shv(_{y,\infty}\Bunt_U^{\cF'_M})$. For $\cS\in \Perv_{G(\cO_y)}(\Gr_{G,y})$, $\cT\in Shv(_{y,\infty}\Bunt_U^{\cF'_M})$ we get the corresponding objects $\cT\ast\cS$ and $\cS\ast\cT$ in $Shv(_{y,\infty}\Bunt_U^{\cF'_M})$. It is easy to see that the above Hecke action preserves the full subcategory 
$$
Shv(_{y,\infty}\Bunt_U^{\cF'_M})^{constr}
$$ 
of constructible objects (defined as in Section~\ref{Sect_1.1.2_conventions}). 

\sssec{} 
\label{Sect_4.1.8_for_v2}
Let $j: \Bun_U^{\cF'_M}\hook{}{_{y,\infty}\Bunt_U^{\cF'_M}}$ be the open embedding, $\bar q': {_{y,\infty}\Bunt_U^{\cF'_M}}\to \Bun_G$ be the projection. Since 
$$
\bar q'_!: Shv(_{y,\infty}\Bunt_U^{\cF'_M})\to Shv(\Bun_G)
$$
commutes with the actions of Hecke functors at $y$, (\ref{Sect_complex_for_Sect_4.1.5_v2}) identifies with
\begin{equation}
\label{complex_for_Sect_4.1.8_v2_for_referee}
\RG_c(\Bun_G, K\otimes \bar q'_!( (j_!\ev_{s'}^*\cL_{\psi})\ast \cA^{\lambda}_G))\,\iso\, \RG_c({_{y,\infty}\Bunt_U^{\cF'_M}}, (\bar q')^*K\otimes ( (j_!\ev_{s'}^*\cL_{\psi})\ast \cA^{\lambda}_G))
\end{equation}

\sssec{} 
\label{Sect_4.1.9_for_v2}
For $\theta\in\Lambda_{G,P}$ let 
$$
_{y,\theta}\Bunt_U^{\cF'_M}\subset {_{y,\infty}\Bunt_U^{\cF'_M}}
$$ 
be the locally closed substack defined as follows. For $[G,G]$ simply-connected it is given by the property that for any $\check{\lambda}\in\check{\Lambda}^+_{G, P}$ the map $\kappa^{\cV^{\check{\lambda}}}$ gives rise to a regular map
\begin{equation}
\label{map_for_check_lambda_in_Sect_4.1.9_v2}
\cL^{\check{\lambda}}_{\cF'_M}(-\<\theta, \check{\lambda}\>y)\to \cV^{\check{\lambda}}_{\cF'_G}
\end{equation}
over the whole of $X$, which moreover has no zeros in a neighbourhood of $y$. Recall that here $\cV^{\check{\lambda}}$ denotes the corresponding Weyl module for $G$. One adapts the above definition to the case of any $G$ reductive using the correction from (\cite{Sch}, Section~7).

 The stacks $_{y,\theta}\Bunt_U^{\cF'_M}$ for $\theta\in\Lambda_{G,P}$ form a stratification of ${_{y,\infty}\Bunt_U^{\cF'_M}}$. We claim that the contribution of each stratum $_{y,\theta}\Bunt_U^{\cF'_M}$ to the integral (\ref{complex_for_Sect_4.1.8_v2_for_referee}) vanishes.
 
  For $\theta\in\Lambda_{G,P}$ define the open substack $_{y,\theta}\Bun_U^{\cF'_M}\subset {_{y,\theta}\Bunt_U^{\cF'_M}}$ (in the case of $[G,G]$ simply-connected) by
the property that for any $\check{\lambda}\in\check{\Lambda}^+_{G, P}$ the maps (\ref{map_for_check_lambda_in_Sect_4.1.9_v2}) have no zeros on the whole of $X$. We leave it to a reader to adapt this definition for the general $G$ reductive.

 By construction, the $*$-restriction of $(j_!\ev_{s'}^*\cL_{\psi})\ast \cA^{\lambda}_G$ to 
$_{y,\theta}\Bunt_U^{\cF'_M}$ is the extension by zero from ${_{y,\theta}\Bun_U^{\cF'_M}}$. Write
$$
i_{\theta}: {_{y,\theta}\Bun_U^{\cF'_M}}\hook{} {_{y,\infty}\Bunt_U^{\cF'_M}}
$$
for the natural inclusion.

\sssec{} 
\label{Sect_3.1.6}
For $\theta\in \Lambda_{G,P}$ denote by $\Gr_M^{\theta}$ be the connected component of $\Gr_M$ containing the element $t^{\lambda}M(\cO)$ for any $\lambda\in\Lambda$ over $\theta$. Let $\Gr_P^{\theta}$ be the preimage of $\Gr_M^{\theta}$ under the natural map $\Gr_P\to \Gr_M$. 

  We have a natural map $\Gr_P^{\theta}\to\Gr_G$. At the level of reduced finite-dimensional subschemes of $\Gr_G$, for any closed subscheme of finite type $S\subset \Gr_G$ this gives a stratification of $S$ by $S\cap \Gr_P^{\theta}$, $\theta\in\Lambda_{G,P}$.
  
 
\sssec{} 
\label{Sect_3.1.7}
As in \cite{BG} write $\Gr_M^+\subset \Gr_M$ for the positive part of the affine grassmanian for $M$. It classifies an $M$-torsor $\cF_M$ on $X$ with a trivialization 
$\beta_M: \cF_M\,\iso\, \cF^0_M\mid_{X-y}$ such that for any finite-dimensional $G$-module $V$ the map
$$
\beta_M: V^{U}_{\cF_M}\to V^{U}_{\cF^0_M}
$$
is regular over $X$. For $\theta\in\Lambda_{G,P}$ set 
$$
\Gr_M^{\theta, +}=\Gr_M^{\theta}\cap \Gr_M^+
$$

Let $\Gr_M(\cF'_M)$ be the stack classifying $\cF_M\in\Bun_M$ and an isomorphism $\beta_M: \cF_M\,\iso\, \cF'_M\mid_{X-y}$. A trivialization $\epsilon_M: \cF'_M\,\iso\, \cF^0_M\mid_{D_y}$ yields $\Gr_M(\cF'_M)\,\iso\,\Gr_M$. Let 
$$
\Gr_M^{\theta}(\cF'_M)\subset \Gr_M(\cF'_M)
$$ 
be the substack corresponding to $\Gr_M^{\theta}$ under this identification. It is independent of a choice of $\epsilon_M$. One defines $\Gr_M^+(\cF'_M), \Gr_M^{\theta, +}(\cF'_M)$ similarly. 

\sssec{} 
\label{Sect_4.1.12_for_v2}
Let $\Gr_M^{\theta, -}(\cF'_M)$ be the stack classifying a $M$-torsor $\cF_M$ on $X$, an isomorphism $\beta_M:  \cF_M\,\iso\,\cF'_M\mid_{X-y}$ such that 
$$
(\cF'_M,\beta_M)\in \Gr_M^{-\theta, +}(\cF_M)
$$ 

For $\theta\in\Lambda_{G,P}$ the stack $_{y,\theta}\Bun_U^{\cF'_M}$ classifies a $P$-torsor $\cF_P$ on $X$ for which we set $\cF_M=\cF_P\times_P M$, and an isomorphism $\beta_M: \cF_M\,\iso\,\cF'_M\mid_{X-y}$ such that $(\cF_M,\beta_M)\in \Gr_M^{\theta, -}(\cF'_M)$. Let 
$$
q_{\theta}: {_{y,\theta}\Bun_U^{\cF'_M}}\to \Gr_M^{\theta, -}
(\cF'_M)
$$ 
be the projection sending the above point to $(\cF_M,\beta_M)$.
    
\sssec{} 
\label{Sect_3.1.11}
Pick $\theta\in\Lambda_{G,P}$. Pick a $k$-point $\eta=(\cF_M, \beta_M)\in \Gr_M^{\theta, -}(\cF'_M)$. The fibre of $q_{\theta}$ over $\eta$ identifies with $\Bun_U^{\cF_M}$. Write 
\begin{equation}
\label{inclusion_iota_eta_for_Sect_4.1.13_v2}
\iota_{\eta}: \Bun_U^{\cF_M}\hook{} {_{y,\theta}\Bun_U^{\cF'_M}}
\end{equation} 
for the corresponding closed immersion.

 Say that $\eta$ has \select{positive $s'$-conductor} if the map 
$$
(V/V')_{\cF_M}\toup{\beta_M} (V/V')_{\cF'_M}\toup{s'}\Omega\mid_{X-y}
$$ 
initially defined over $X-y$ extends to a regular map $(V/V')_{\cF_M}\toup{s} \Omega$ on $X$. If $\eta$ has positive $s'$-conductor then one gets a morphism $\ev_s: \Bun_U^{\cF_M}\to\Ga$ defined along the same lines as $\ev_{s'}$. 

\begin{Lm} 
\label{Lm_3.1.12}
If $\eta$ has positive $s'$-conductor then 
$$
\iota_{\eta}^*i_{\theta}^*((j_!\ev_{s'}^*\cL_{\psi})\ast \cA^{\lambda}_G)
$$
is isomorphic to $\ev_s^*\cL_{\psi}\otimes M$ for some constant complex $M\in Shv(\Spec k)$. Otherwise, it vanishes.
\end{Lm}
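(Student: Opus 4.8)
The plan is to reduce the statement to a classification of a Whittaker-type category on $\Bun_U^{\cF_M}$, carrying out in detail the strategy that was only sketched in Section~\ref{Sect_2.1.16} for $x$ regular. First I would make the Hecke convolution explicit on the fibre, following the model of Section~\ref{Sect_2.1} (the stack $Z$ and the identification (\ref{complx_for_2.1.13})): I expect $\iota_\eta^* i_\theta^*\big((j_!\ev_{s'}^*\cL_\psi)\ast\cA^\lambda_G\big)$ to be computed by a Hecke correspondence over $\Bun_U^{\cF_M}$ whose fibres are cut out of $\ov{\Gr}_G^\lambda$ by the parabolic semi-infinite loci $\Gr_P^\theta$ of Section~\ref{Sect_3.1.6}, i.e.\ to be of the form $(h^{\la}_Z)_!$ of the twisted product of the $*$-restriction $\cA^\lambda_G\mid_{\ov{\Gr}_G^\lambda\cap\Gr_P^\theta}$ with $\ev_{s'}^*\cL_\psi$, the point being that the modification of $\cF'_M$ at $y$ along $\Gr_P^\theta$ produces exactly $\cF_M$. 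Here one uses the facts from Sections~\ref{Sect_4.1.9_for_v2}--\ref{Sect_3.1.11} that the $*$-restriction of $(j_!\ev_{s'}^*\cL_\psi)\ast\cA^\lambda_G$ to ${}_{y,\theta}\Bunt_U^{\cF'_M}$ is extended by zero from ${}_{y,\theta}\Bun_U^{\cF'_M}$ and that the fibre of $q_\theta$ over $\eta$ is $\Bun_U^{\cF_M}$.

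Next I would introduce the Whittaker category $Shv^W(\Bun_U^{\cF_M})$ attached to $s'$ exactly as in Section~\ref{Sect_2.1.16}: its heart is the Serre subcategory of $Shv(\Bun_U^{\cF_M})^{\heartsuit}$ consisting of objects all of whose constructible perverse subsheaves are equivariant under the groupoid built from the action of $(V/V')_{\cF_M}$ twisted by $\psi$ composed with the pairing against $s'$, and $Shv^W(\Bun_U^{\cF_M})$ then consists of the objects with all perverse cohomologies in this heart. The decisive point is that this groupoid acts ``over $X$'' and so commutes with the Hecke action of $\Perv_{G(\cO_y)}(\Gr_{G,y})$ at $y$; since $j_!\ev_{s'}^*\cL_\psi$ is Whittaker by construction (cleanness of the Whittaker sheaf), so is $(j_!\ev_{s'}^*\cL_\psi)\ast\cA^\lambda_G$, and hence so is the complex of the previous step after $*$-restriction to $\Bun_U^{\cF_M}$. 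Moreover, the character one is forced to use on $\Bun_U^{\cF_M}$ is the section obtained by transporting $s'$ through $\beta_M$, and by definition this extends to a regular map $s:(V/V')_{\cF_M}\to\Omega$ on all of $X$ precisely when $\eta$ has positive $s'$-conductor.

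Finally I would prove that $Shv^W(\Bun_U^{\cF_M})$ vanishes when $\eta$ does not have positive $s'$-conductor, and that otherwise every object of it is isomorphic to $\ev_s^*\cL_\psi\otimes M$ for some constant complex $M\in Shv(\Spec k)$; this is the analogue for general even $x$ of Lemma~\ref{Lm_2.1.17}, which for $x$ regular was reduced to (\cite{FGV}, Lemma~6.2.8). Applying this classification to the object produced in the first two steps gives the lemma. I expect this last step to be the main obstacle: in the present generality $U=V$ need not be abelian and $V/V'$ is an arbitrary $M$-module rather than a sum of root lines, so one must show directly that the only $\psi$-twisted equivariant perverse sheaves on $\Bun_U^{\cF_M}$ are the expected ones and that there are no nontrivial self-extensions. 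I anticipate that this uses the fibration $\Gr_P^\theta\to\Gr_M^\theta$ together with the cleanness properties of the Satake sheaves $\cA^\lambda_G$ along the parabolic semi-infinite orbits, plus a contraction argument on the boundary of $\Bunt_U^{\cF_M}$, to force the cohomology with the Artin--Schreier twist either to vanish or to collapse to a constant.
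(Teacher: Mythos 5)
Your proposal follows essentially the same strategy as the paper: reduce the lemma to a classification of a Whittaker-type $\DG$-category $Shv^W(\Bun_U^{\cF_M})$ attached to the transported character $s$, with the classification being the content of Lemma~\ref{Lm_3.2.12} (which the paper, like you, refers to the argument of \cite{FGV}, Lemma~6.2.8). The one genuine difference is that you propose to first write $\iota_\eta^* i_\theta^*\big((j_!\ev_{s'}^*\cL_\psi)\ast\cA^\lambda_G\big)$ explicitly as $(h^{\la}_Z)_!$ of a twisted product over a Hecke correspondence cut out of $\ov{\Gr}_G^\lambda$ by $\Gr_P^\theta$, in analogy with the stack $Z$ and formula (\ref{complx_for_2.1.13}) of the regular case. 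The paper bypasses this computation entirely: it proves the object lies in $Shv^W(\Bun_U^{\cF_M})^{constr}$ by pure functoriality, using that $j_!\ev_{s'}^*\cL_\psi\in Shv^W$, that the Hecke action preserves $Shv^W(_{y,\infty}\Bunt_U^{\cF'_M})$ (Proposition~\ref{Pp_3.2.9}, which works because the groupoid $\cG_{\bar z}$ only involves points $\bar z$ disjoint from $y$), and that $\iota_\eta^* i_\theta^*$ sends $Shv^W$ to $Shv^W$ since the closed immersion (\ref{inclusion_iota_eta_for_Sect_4.1.13_v2}) is stable under the relevant groupoids. Your explicit route is valid and closer to the regular-case argument, but is more work; the abstract route generalizes more cleanly.

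One inaccuracy worth flagging: your last paragraph attributes the classification Lemma~\ref{Lm_3.2.12} to cleanness of Satake sheaves along semi-infinite orbits and a contraction on the boundary of $\Bunt_U^{\cF_M}$. That is not what drives it; the classification statement is purely about $\Bun_U^{\cF_M}$ itself (uncompactified, no reference to $\cA^\lambda_G$ or $\Gr$) and, as in \cite{FGV}, Lemma~6.2.8, is forced by the equivariance with respect to the $(V/V')$-groupoid twisted by $\psi\circ\langle -, s'\rangle$, together with the affine-fibration structure of $\Bun_U^{\cF_M}$. You do correctly identify the real difficulty in the general even case — that $U=V$ need not be abelian and $V/V'\cong\gg_2$ is an arbitrary $M$-module — so the generalization of the FGV argument has content, but it does not go through the mechanism you sketched.
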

 
 The proof of Lemma~\ref{Lm_3.1.12} is given in Section~\ref{Sect_3.2.11}.
 
\sssec{End of the proof of Theorem~\ref{Th_preservation_Hecke} for $x$ even} 
\label{Sect_3.1.13}
Recall that we picked $\theta\in\Lambda_{G,P}$ and a $k$-point $\eta=(\cF_M, \beta_M)\in \Gr_M^{\theta, -}(\cF'_M)$. 
It suffices to show that 
$$
(q_{\theta})_!\left(
i_{\theta}^*(\bar q')^*K\otimes  i_{\theta}^*((j_!\ev_{s'}^*\cL_{\psi})\ast \cA^{\lambda}_G)\right)
$$
vanishes. For this in turn it suffices to show that the $*$-fibre of the latter complex at $\eta$ vanishes (our argument is compatible with field extensions). Let $q: \Bun_U^{\cF_M}\to \Bun_G$ be the natural map. We must show that
$$
\RG_c(\Bun_U^{\cF_M}, q^*K\otimes \iota_{\eta}^*i_{\theta}^*((j_!\ev_{s'}^*\cL_{\psi})\ast \cA^{\lambda}_G))=0
$$
By Lemma~\ref{Lm_3.1.12} we may assume that $\eta$ has positive $s'$-conductor. Then our claim follows from Lemma~\ref{Lm_3.1.12}, because
$$
\RG_c(\Bun_U^{\cF_M}, q^*K\otimes\ev_s^*\cL_{\psi})=0
$$ 
by definition of $\cF_{\OO}$. Indeed, $s$ and $s'$ coincide at the generic point of $X$, so $(\cF_M, s)\in \cY^0_M$. Theorem~\ref{Th_preservation_Hecke} is proved for $x$ even. \QED

\ssec{Definition of the $W$-category} 
\label{Sect_3.2}

\sssec{} 
\label{Sect_3.2.1}
As in (\cite{Ga1}, Section~4) or (\cite{Ga2}, Section~2), we define a full $\DG$-subcategory $Shv^W(_{y,\infty}\Bunt_U^{\cF'_M})\subset Shv(_{y,\infty}\Bunt_U^{\cF'_M})$ attached to $s'$. Our strategy is first to define a full Serre abelian subcategory $Shv^W(_{y,\infty}\Bunt_U^{\cF'_M})^{\heartsuit}$ of the heart $Shv(_{y,\infty}\Bunt_U^{\cF'_M})^{\heartsuit}$. Then we let $Shv^W(_{y,\infty}\Bunt_U^{\cF'_M})$ be the full subcategory of those objects of $Shv(_{y,\infty}\Bunt_U^{\cF'_M})$ whose all perverse cohomology sheaves lie in $Shv^W(_{y,\infty}\Bunt_U^{\cF'_M})^{\heartsuit}$.

 We give the definition under the assumption that $[G,G]$ is simply-connected, the general case is done similarly adding the correction from (\cite{Sch}, Section~7). 
  
\sssec{} 
\label{Sect_4.2.2_now}
Let $z\in X$ be a closed point different from $y$. Let $(V/V')^{reg}_z$ (resp., $(V/V')^{mer}_z$) denote the group scheme (resp., group ind-scheme) of sections of $(V/V')_{\cF'_M}$ over $D_z$ (resp., over $D^*_z$). Here `reg' stands for `regular', and `mer' stands for `meromorphic'. 

 More generally, let $\bar z=\{z_1,\ldots, z_m\}$ be a finite collection of pairwise different closed points of $X-y$. Let $D_{\bar z}$ be the formal neighbourhood of $\bar z=\cup_{i=1}^m z_i$. Replacing $D_z$ by $D_{\bar z}\,\iso\, \prod_i D_{z_i}$, one 
similarly defines $(V/V')^{reg}_{\bar z}$, $(V/V')^{mer}_{\bar z}$. 

 For $1\le i\le m$ let $\chi_{z_i}: (V/V')^{mer}_{z_i}\to\Ga$ be the character given as the composition
$$
(V/V')^{mer}_{z_i}\toup{s'}\Omega(F_{z_i})\toup{\Res}\Ga
$$
Here $F_{z_i}$ is the completion of the field of rational functions on $X$ at $z_i$. 
Define the character 
$$
\chi_{\bar z}: (V/V')^{mer}_{\bar z}\to\Ga
$$ 
as $\sum_{i=1}^m \chi_{z_i}$. Note that 
$$
(V/V')^{mer}_{\bar z}/(V/V')^{reg}_{\bar z}
$$ 
can be seen as the affine grassmanian $\Gr_{(V/V')_{\cF'_M}, \bar z}$ for the group scheme $(V/V')_{\cF'_M}$ at $\bar z$. The latter classifies $(V/V')_{\cF'_M}$-torsors on $D_{\bar z}$ together with a trivialization over $D_{\bar z}^*$. 

 Since $\chi_{\bar z}$ is trivial on $(V/V')^{reg}_{\bar z}$, $\chi_{\bar z}$ writes as the composition
$$
(V/V')^{mer}_{\bar z}\to \Gr_{(V/V')_{\cF'_M}, \bar z}\;\toup{\bar\chi_{\bar z}\;\,}\Ga
$$
 
\sssec{} Let 
$$
_{y,\infty}\Bunt_{U, \,{\rm{good\; at}}\; \bar z}^{\cF'_M}\subset {_{y,\infty}\Bunt_U^{\cF'_M}}
$$ 
be the open substack given by the property that all the maps 
$$
\kappa^{\cV}: \cV^U_{\cF'_M}\hook{} \cV_{\cF'_G}(\infty y)
$$
are maximal over $D_{\bar z}$. For a point of $_{y,\infty}\Bunt_{U, \,{\rm{good\; at}}\; \bar z}^{\cF'_M}$ we get a $P$-torsor $\cF'_P$ on $D_{\bar z}$ with an isomorphism $\gamma': \cF'_P\times_P M\,\iso\, \cF'_M$ over $D_{\bar z}$.
 
\sssec{} 
\label{Sect_4.2.4_now}
Let $\cG_{\bar z}$ denote the stack classifying two points of $_{y,\infty}\Bunt_{U, \,{\rm{good\; at}}\;  \bar z}^{\cF'_M}$ denoted by $(\cF_G, \kappa^{\cV})$ and $(\cF'_G, \kappa'^{\cV})$, and an isomorphism $\beta_{\bar z}: \cF_G\,\iso\, \cF'_G\mid_{X-\cup_i z_i}$ making the diagram commutative 
$$
\begin{array}{ccc}
(\cV^U)_{\cF'_M} & \toup{\kappa^{\cV}} & \cV_{\cF_G}(\infty y)\\
& \searrow\lefteqn{\scriptstyle \kappa'^{\cV}} & \downarrow\lefteqn{\scriptstyle \beta_{\bar z}}\\
&& \cV_{\cF_G}(\infty y)
\end{array}
$$
for any finite-dimensional $G$-module $\cV$. 

 For a point of $\cG_{\bar z}$ we get two $P$-torsors $\cF_P$ and $\cF'_P$ on $D_{\bar z}$, and $\beta_{\bar z}$ gives rise to an isomorphism $\beta_{P,\bar z}: \cF_P\,\iso\, \cF'_P\mid_{D^*_{\bar z}}$ making the diagram commute
$$
\begin{array}{ccc}
\cF_P\times_P M &\toup{\gamma} & \cF'_M\\
\downarrow\lefteqn{\scriptstyle \bar\beta_{P,\bar z}}  & 
\nearrow\lefteqn{\scriptstyle \gamma'}\\
\cF'_P\times_P M
\end{array}
$$ 
Here $\bar\beta_{P, \bar z}$ is the extension of scalars of $\beta_{P,\bar z}$ under $P\to M$. So, $\bar\beta_{P, \bar z}$ extends to a regular map over $D_{\bar z}$. 

\sssec{} For a point of $\cG_{\bar z}$ as above let $\cN$ be the sheaf of isomorphisms $\cF_P\to \cF'_P$ of  $P$-torsors over $D_{\bar z}$ compatible with $\bar\beta_{P,\bar z}$. The sheaf of automorphisms of $\cF_P$ on $D_{\bar z}$ acting trivially on $\cF_P\times_P M$ is $V_{\cF_P}$ over $D_{\bar z}$. This sheaf acts on $\cN$ via its action on $\cF_P$. This way $\cN$ becomes a torsor under $V_{\cF_P}$ over $D_{\bar z}$ together with a trivialization $\beta_{\cN}$ over $D^*_{\bar z}$. Namely, $\beta_{P,\bar z}$ gives the corresponding trivialization. The extension of scalars of $(\cN, \beta_{\cN})$ via $V_{\cF_P}\to (V/V')_{\cF'_M}$ gives a morphism 
$$
\cG_{\bar z}\to \Gr_{(V/V')_{\cF'_M}, \bar z}
$$

 Define $\chi_{\cG}: \cG_{\bar z}\to\Ga$ as the composition
$$
\cG_{\bar z}\to \Gr_{(V/[V,V])_{\cF'_M}, \bar z}\;\toup{\bar\chi_{\bar z}\;\,} \Ga
$$ 

\sssec{} 
\label{Sect_4.2.6_now}
We have a diagram of projections 
$$
{_{y,\infty}\Bunt_{U, \,{\rm{good\; at}}\; \bar z}^{\cF'_M}}\;\getsup{h^{\la}_{\cG}}\;
 \cG_{\bar z}\;\toup{h^{\ra}_{\cG}}\; {_{y,\infty}\Bunt_{U, \,{\rm{good\; at}}\; \bar z}^{\cF'_M}}
$$
Here $h^{\ra}_{\cG}$ (resp., $h^{\la}_{\cG}$) sends the above point to $(\cF'_G, \kappa'^{\cV})$ (resp., to $(\cF_G, \kappa^{\cV})$). This way $\cG_{\bar z}$ gets a structure of a groupoid over $_{y,\infty}\Bunt_{U, \,{\rm{good\; at}}\; \bar z}^{\cF'_M}$.

\sssec{} 
\label{Sect_4.2.7_now}
We first define 
$$
Shv^{W, constr, \heartsuit}_{{\rm{good\; at}}\; \bar z}\subset (Shv(_{y,\infty}\Bunt_{U, \,{\rm{good\; at}}\; \bar z}^{\cF'_M})^{constr})^{\heartsuit}
$$ 
as the full subcategory of those objects which are 
 $(\cG_{\bar z}, \chi_{\cG}^*\cL_{\psi})$-equivariant. The precise sense of this is as in (\cite{Ga1}, Section~4.7). 

 Let now $\bar z'$ be another such collection of points and $\bar z''=\bar z\cup \bar z'$. Then we may consider the versions of the above objects for $\bar z'$ and $\bar z''$. 
As in (\cite{Ga2}, Section~2.5), if $\bar z$ is not empty then for an object 
$$
\cK\in  (Shv(_{y,\infty}\Bunt_{U, \,{\rm{good\; at}}\; \bar z''}^{\cF'_M})^{constr})^{\heartsuit}$$ 
the 
$(\cG_{\bar z}, \chi_{\cG}^*\cL_{\psi})$-equivariance and $(\cG_{\bar z''}, \chi_{\cG}^*\cL_{\psi})$-equivariance are equivalent. 

\begin{Def} 
\label{Def_3.2.6}
Let 
$$
Shv^{W, constr,\heartsuit}\subset (Shv(_{y,\infty}\Bunt_U^{\cF'_M})^{constr})^{\heartsuit}
$$
be the full subcategory of $\cK$ such that for any nonempty finite collection $\bar z$ as above, the restriction of $\cK$ to $(Shv(_{y,\infty}\Bunt_{U, \,{\rm{good\; at}}\; \bar z}^{\cF'_M})^{constr})^{\heartsuit}$ lies in $Shv^{W, constr, \heartsuit}_{{\rm{good\; at}}\; \bar z}$.
\end{Def}
     
 One checks that $Shv^{W, constr,\heartsuit}$ is an abelian Serre subcategory of $(Shv(_{y,\infty}\Bunt_U^{\cF'_M})^{constr})^{\heartsuit}$. 
  
\begin{Def} 
\label{Def_3.2.7}
Let 
$$
Shv^W(_{y,\infty}\Bunt_U^{\cF'_M})^{\heartsuit}\subset   Shv(_{y,\infty}\Bunt_U^{\cF'_M})^{\heartsuit}
$$
be the full subcategory of those $\cK$ such that for any $\cK'\in (Shv(_{y,\infty}\Bunt_U^{\cF'_M})^{constr})^{\heartsuit}$ and an embedding $\cK'\hook{}\cK$ one has $\cK'\in Shv^{W, constr,\heartsuit}$.
\end{Def}

 One checks that $Shv^W(_{y,\infty}\Bunt_U^{\cF'_M})^{\heartsuit}$ is a Serre abelian subcategory of $Shv(_{y,\infty}\Bunt_U^{\cF'_M})^{\heartsuit}$ closed under small colimits. 
\begin{Def} 
\label{Def_3.2.8}
Let  
\begin{equation}
\label{W-subcategory_Sect_3.2.7}
Shv^W(_{y,\infty}\Bunt_U^{\cF'_M})\subset   Shv(_{y,\infty}\Bunt_U^{\cF'_M})
\end{equation}
be the full subcategory of those objects $\cK$ whose all perverse cohomology sheaves lie in 
$Shv^W(_{y,\infty}\Bunt_U^{\cF'_M})^{\heartsuit}$.
\end{Def}

 We conclude that $Shv^W(_{y,\infty}\Bunt_U^{\cF'_M})\in\DGCat_{cont}$, the inclusion (\ref{W-subcategory_Sect_3.2.7}) is continuous, and $Shv^W(_{y,\infty}\Bunt_U^{\cF'_M})$ is compatible with the perverse t-structure on $Shv(_{y,\infty}\Bunt_U^{\cF'_M})$, so inherits a t-structure (compatible with filtered colimits).
 
 Let also 
$$
Shv^W(_{y,\infty}\Bunt_U^{\cF'_M})^{constr}=Shv(_{y,\infty}\Bunt_U^{\cF'_M})^{constr} \cap Shv^W(_{y,\infty}\Bunt_U^{\cF'_M})
$$
Then $Shv^W(_{y,\infty}\Bunt_U^{\cF'_M})^{constr}\in\DGCat^{non-cocmpl}$, and this $\DG$-category inherits a t-structure from $Shv(_{y,\infty}\Bunt_U^{\cF'_M})^{constr}$.
 
Since in our definitions $\bar z$ are chosen distinct from $y$, one gets the following.

\begin{Pp} 
\label{Pp_3.2.9}
The Hecke action of $\Perv_{G(\cO_y)}(\Gr_{G,y})$ on $Shv(_{y,\infty}\Bunt_U^{\cF'_M})$ preserves the full subcategories $Shv^W(_{y,\infty}\Bunt_U^{\cF'_M})$ and $Shv^W(_{y,\infty}\Bunt_U^{\cF'_M})^{constr}$. \QED
\end{Pp}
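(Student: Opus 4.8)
The plan is to deduce the proposition from the single fact that a Hecke modification at $y$ and the groupoid $\cG_{\bar z}$ of Section~\ref{Sect_4.2.4_now}, whose points involve gluings only over $X-\cup_i z_i$ with $\bar z\subset X-y$, are supported at disjoint points of $X$ and therefore commute; everything else is bookkeeping about the perverse $t$-structure and about the three layers of Definitions~\ref{Def_3.2.6}--\ref{Def_3.2.8}.

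First I would set up the geometry. A modification of $\cF'_G$ at $y$ does not change the behaviour of the maps $\kappa^{\cV}$ near $\bar z$, so the Hecke correspondence for $\cA^{\lambda}_G$ restricts to a correspondence ${}_{\bar z}\cH$ over the open substack ${}_{y,\infty}\Bunt_{U,\,\text{good at}\,\bar z}^{\cF'_M}$, with legs the restrictions of $h^{\la}$ and $h^{\ra}$; in particular restriction to the good-at-$\bar z$ locus commutes with the Hecke action. Because the modification at $y$ and the gluing $\beta_{\bar z}$ over $X-\cup_i z_i$ are supported at disjoint points, the fibre product of ${}_{\bar z}\cH$ with $\cG_{\bar z}$ over ${}_{y,\infty}\Bunt_{U,\,\text{good at}\,\bar z}^{\cF'_M}$ formed using $h^{\la}$ and $h^{\la}_{\cG}$ coincides with the one formed using $h^{\ra}$ and $h^{\ra}_{\cG}$; denote it ${}_{\bar z}\cG^{\cH}$. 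This exhibits ${}_{\bar z}\cG^{\cH}$ as a groupoid acting on ${}_{\bar z}\cH$ for which $h^{\la}$ and $h^{\ra}$ become $\cG_{\bar z}$-equivariant maps, and the character $\chi_{\cG}$ of Section~\ref{Sect_4.2.6_now} pulls back along either leg to the same character of ${}_{\bar z}\cG^{\cH}$.

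Given this, the statement at the level of the constructible heart follows by base change. The $!$-pullback along the equivariant map $h^{\la}$, followed by the twist by the $!$-pullback of $\cA^{\lambda}_G$ from $\ov{\Gr}_G^{\lambda}$ --- which is untouched by the groupoid at $\bar z$ --- carries $(\cG_{\bar z},\chi_{\cG}^*\cL_{\psi})$-equivariant objects to $({}_{\bar z}\cG^{\cH},\,\cdots)$-equivariant ones; and the $!$-pushforward along the remaining, proper leg (its fibres being $\ov{\Gr}_G^{\lambda}$) equals the $*$-pushforward and hence commutes with the groupoid action by proper base change along the two legs of ${}_{\bar z}\cG^{\cH}$. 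Thus the Hecke functor intertwines the $\cG_{\bar z}$-actions on source and target, so it preserves the property of being $(\cG_{\bar z},\chi_{\cG}^*\cL_{\psi})$-equivariant after restriction to the good locus; since this holds for every nonempty $\bar z$, it preserves $Shv^{W,constr,\heartsuit}$ of Definition~\ref{Def_3.2.6}.

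It remains to propagate this through Definitions~\ref{Def_3.2.7} and \ref{Def_3.2.8}. For the non-constructible heart one uses that the Hecke functor preserves constructibility (as already observed in the excerpt for $Shv({}_{y,\infty}\Bunt_U^{\cF'_M})^{constr}$) together with the fact that, after restriction to a finite-type open substack, every constructible perverse subobject of the Hecke image of $\cK$ is a subquotient of the Hecke image of a constructible perverse subobject of some perverse cohomology sheaf of $\cK$, and that $Shv^{W,constr,\heartsuit}$ is Serre. For the $\DG$-subcategory one reduces to $\cK$ perverse (the category being closed under extensions and subquotients) and writes the Hecke functor as the composite of the pullback-and-$\IC$-twist --- which is $t$-exact up to shift and sends perverse sheaves to perverse sheaves on ${}_{\bar z}\cH$ with the usual normalization --- and the proper pushforward; so the Hecke image of a perverse sheaf has perverse cohomology sheaves, each of which is $\cG_{\bar z}$-equivariant for all $\bar z$ by the previous paragraph, hence lies in $Shv^W({}_{y,\infty}\Bunt_U^{\cF'_M})^{\heartsuit}$. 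The $constr$-version is then the intersection with the already Hecke-stable $Shv({}_{y,\infty}\Bunt_U^{\cF'_M})^{constr}$. The only real obstacle is organizational: constructing ${}_{\bar z}\cG^{\cH}$ and checking the disjoint-support identification of fibre products with care, and threading the $t$-structure bookkeeping --- including the non-finite-type phenomena of Remark~\ref{Rem_2.1.18} --- cleanly through Definitions~\ref{Def_3.2.6}--\ref{Def_3.2.8}; the geometric content, the commutation of operations at $y$ and at $\bar z$, is immediate.
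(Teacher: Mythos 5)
Your proposal is correct and takes the same approach as the paper: the paper's entire justification is the one-line remark preceding the proposition ("Since in our definitions $\bar z$ are chosen distinct from $y$, one gets the following"), and your argument is exactly the unwinding of that remark — disjointness of $y$ and $\bar z$ forces the Hecke correspondence and the groupoid $\cG_{\bar z}$ to commute, and the rest is base change and $t$-structure bookkeeping through Definitions~\ref{Def_3.2.6}--\ref{Def_3.2.8}.
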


\sssec{Proof of Lemma~\ref{Lm_3.1.12}} 
\label{Sect_3.2.11}
For $\theta\in\Lambda_{G,P}$ one similarly defines the versions 
$$
Shv^W(_{y,\theta}\Bun_U^{\cF'_M}),\;\;\;\; Shv^W(_{y,\theta}\Bun_U^{\cF'_M})^{constr}
$$ 
of $W$-categories. As soon as a closed point $\eta=(\cF_M, \beta_M)\in \Gr_M^{\theta, -}(\cF'_M)$ is picked, the closed immersion (\ref{inclusion_iota_eta_for_Sect_4.1.13_v2}) is stable under the action of the corresponding groupoids. So, we similarly get the categories $Shv^W(\Bun_U^{\cF_M})$, $Shv^W(\Bun_U^{\cF_M})^{constr}$. By functoriality, $\iota_{\eta}^*i_{\theta}^*$ sends 
$Shv^W(_{y,\infty}\Bunt_U^{\cF'_M})$ to $Shv^W(\Bun_U^{\cF_M})$ and preserves constructibility. Our claim is reduced to Lemma~\ref{Lm_3.2.12} below. \QED

\begin{Lm}
\label{Lm_3.2.12}
 If $\eta$ has positive $s'$-conductor then any object of $Shv^W(\Bun_U^{\cF_M})^{constr}$ is isomorphic to $\ev_s^*\cL_{\psi}\otimes M$ for some constant complex $M\in Shv(\Spec k)$. Otherwise, $Shv^W(\Bun_U^{\cF_M})^{constr}$ vanishes.
\end{Lm}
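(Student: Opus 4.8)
\emph{Proof strategy.} This is a Whittaker-category statement, and the plan is to argue exactly as for the torus case (\cite{FGV}, Lemma~6.2.8), now for the Levi $M$ and the $M$-module $V/V'=\gg_2$ in place of a sum of simple-root lines. Throughout one may assume $[G,G]$ simply-connected, the general case following from the correction of (\cite{Sch}, Section~7). Recall $U=V$ since $x$ is even; write $\mathbb{V}=(V/V')_{\cF_M}$, a vector bundle on $X$, and $s=s'\comp\beta_M:\mathbb{V}\to\Omega$, defined a priori over $X-y$, so that $\eta$ has positive $s'$-conductor precisely when $s$ extends regularly over $y$.

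\emph{Step 1: reduction to $V/V'$.} First I would push the question along $\pi:\Bun_U^{\cF_M}=\Bun_V^{\cF_M}\to\Bun_{V/V'}^{\cF_M}$, an iterated affine fibration with structure groups $(\gg_i)_{\cF_M}$, $i\ge 4$. Since $\ev_s$ sees only the extension class of the $V/V'$-part, and the character defining the $W$-category factors through $V/[V,V]$, hence (as $[V,V]\subset V'$) through $V/V'$, equivariance in the $V'$-directions carries a trivial character; so $\pi^*$, shifted by $\dimrel(\pi)$, should give an equivalence $Shv^W(\Bun_{V/V'}^{\cF_M})^{constr}\iso Shv^W(\Bun_U^{\cF_M})^{constr}$ intertwining the two copies of $\ev_s$. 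It then suffices to treat the abelian stack $\Bun_{\mathbb{V}}^{\cF_M}$ of extensions $0\to\mathbb{V}\to ?\to\cO_X\to 0$.

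\emph{Step 2: uniformization.} For any nonempty finite $\bar z\subset X-y$, since $X-\bar z$ is affine and $\mathbb{V}$ a vector bundle, Weil uniformization identifies $\Bun_{\mathbb{V}}^{\cF_M}$ with the Picard-stack double quotient $\mathbb{V}(X-\bar z)\backslash\mathbb{V}(D^*_{\bar z})/\mathbb{V}(D_{\bar z})$; under it $\cG_{\bar z}$ becomes the action of the middle group $\mathbb{V}(D^*_{\bar z})$, the character $\chi_{\cG}$ becomes $\chi_{\bar z}(g)=\sum_{z_i}\Res_{z_i}\<s,g\>$, and $\ev_s$ (when defined) becomes the function induced by $g\mapsto-\sum_{z_i}\Res_{z_i}\<s,g\>$. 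I would record the resulting identity
$$
\ev_s(g\cdot T)=\ev_s(T)+\chi_{\bar z}(g),\qquad g\in\mathbb{V}(D^*_{\bar z}),\ T\in\Bun_{\mathbb{V}}^{\cF_M},
$$
the global counterpart via the residue theorem of the local definition of $\chi_{\cG}$.

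\emph{Step 3: the residue dichotomy, and the main obstacle.} The character $\chi_{\bar z}$ kills $\mathbb{V}(D_{\bar z})$, and for $v$ regular on $X-\bar z$ the form $\<s,v\>$ has poles only on $\bar z\cup\{y\}$, so $\chi_{\bar z}(v)=-\Res_y\<s,v\>$ by the residue theorem. If $\eta$ has positive $s'$-conductor, $s$ is regular at $y$, so $\chi_{\bar z}$ is trivial on the stabilizer $\mathbb{V}(X-\bar z)+\mathbb{V}(D_{\bar z})$ of any point; then $\ev_s$ is globally defined and, by the identity of Step~2, $\ev_s^*\cL_{\psi}$ is canonically $(\cG_{\bar z},\chi_{\cG}^*\cL_{\psi})$-equivariant for every $\bar z$, so for $\cK\in Shv^W(\Bun_{\mathbb{V}}^{\cF_M})^{constr}$ the twist $\cK\otimes\ev_s^*\cL_{\psi^{-1}}$ is equivariant with trivial character under all $\cG_{\bar z}$; since each already acts transitively on $\Bun_{\mathbb{V}}^{\cF_M}$ with contractible stabilizers, this twist is a constant complex $M$, whence $\cK\iso\ev_s^*\cL_{\psi}\otimes M$. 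If $\eta$ does not have positive $s'$-conductor, $s$ has a genuine pole at $y$, and since $\Res_y(f\<s,\cdot\>)\ne 0$ for suitable $f$ regular at $y$ — equivalently $\mathbb{V}(X-\bar z)$ detects this pole once $\bar z$ is large — there is $v\in\mathbb{V}(X-\bar z)$ with $\chi_{\bar z}(v)\ne 0$; but $v$ acts trivially on $\Bun_{\mathbb{V}}^{\cF_M}$, so over each point $\cG_{\bar z}$ carries an automorphism on which $\chi_{\cG}$ is a nontrivial additive character, forcing every equivariant object to vanish, i.e.\ $Shv^W(\Bun_U^{\cF_M})^{constr}=0$. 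The main obstacle I anticipate is Step~2 together with the stabilizer bookkeeping in Step~3: making the uniformization precise enough that the $\cG_{\bar z}$-action, the character $\chi_{\cG}$ and $\ev_s$ are matched simultaneously — in particular checking $\ev_s(g\cdot T)=\ev_s(T)+\chi_{\bar z}(g)$ — and controlling the stabilizers well enough to land on a constant complex $M$ rather than something larger. For $M=T$ this is handled root by root in (\cite{FGV}, Section~6); here it must be organized uniformly through the single bundle $\mathbb{V}$.
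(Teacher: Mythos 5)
The paper gives no proof of Lemma~\ref{Lm_3.2.12} beyond the citation ``analogous to (\cite{FGV}, Lemma~6.2.8)'', so your task was to reconstruct the intended FGV-style argument. Your Steps~2 and~3 do capture the correct mechanism: Weil uniformization, transitivity of the local-modification groupoid, and the residue-theorem dichotomy distinguishing the case where the character $\chi_{\bar z}$ is trivial on stabilizers (yielding $\ev_s^*\cL_\psi\otimes M$) from the case where it is not (yielding zero). This is indeed the analogy being invoked.

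Where I see a genuine gap is Step~1. You assert that $\pi:\Bun_V^{\cF_M}\to\Bun_{V/V'}^{\cF_M}$ is an ``iterated affine fibration'' and that $\pi^*[\dimrel\pi]$ gives an equivalence of Whittaker categories, but this is not correct as stated. Each graded piece of the filtration of $V'$ contributes a fiber which is not an affine space but a Picard-stack-type object: the stack of extensions $0\to(\gg_j)_{\cF_M}\to?\to\cO_X\to 0$, which looks like $\H^1(X,(\gg_j)_{\cF_M})\times B\H^0(X,(\gg_j)_{\cF_M})$. For the modified torsor $\cF_M=\cF'_M$ twisted by a negative $\theta$, $\H^0$ need not vanish, so $\pi$ has stacky fibers and $\pi^*$ does not behave like pullback along an affine fibration. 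You hedge with ``should give an equivalence'', and indeed the assertion would require an argument that equivariance under the $V'$-part of $\cG_{\bar z}$ (on which the character is trivial) forces descent both along the $\H^1$ direction and the $B\H^0$ gerbe direction. That argument is not supplied. A cleaner route — and closer to what FGV actually do — is to skip the reduction entirely and run Steps~2--3 directly on $\Bun_V^{\cF_M}$: the groupoid $\cG_{\bar z}$ in Sections~\ref{Sect_4.2.4_now}--\ref{Sect_4.2.7_now} is already built at the level of $V$-torsors (it modifies $\cF_P$ at $\bar z$, not just its $V/V'$-part), acts transitively on the good locus by unipotent Weil uniformization, and the character still factors through $V/V'$; the dichotomy on stabilizers then gives exactly the statement without ever passing to $\Bun_{V/V'}^{\cF_M}$. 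If you insist on Step~1, you must prove the descent claim rather than assert it; if you drop it, the rest of your argument goes through essentially as written.
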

\begin{proof}
This is analogous to (\cite{FGV}, Lemma~6.2.8).
\end{proof}

Theorem~\ref{Th_preservation_Hecke} is proved for $x$ even. 

\section{Case of $x$ odd}
\label{Sect_x_odd}

\ssec{} In Section~\ref{Sect_x_odd} we assume $x$ odd and prove Theorem~\ref{Th_preservation_Hecke} in this case. Keep notations of Section~\ref{Sect_1.2}.

\sssec{} Recall that we picked a closed point $y\in X$, and our Hecke functors are applied at $y\in X$. Let $K\in\cF_{\OO}$, $\lambda\in\Lambda^+$. We want to show that $\cA^{\lambda}_G\ast K\in \cF_{\OO}$.

\sssec{} Pick a $k$-point of $\cY_P^0$ given by $\cF^1\in\Bun_{P/V'}$ and $s': (V/V')_{\cF'_M}\to\Omega$, where $\cF'_M=\cF^1\times_{P/V'} M$. We will show that the $*$-fibre of $\Four_P(\gp_!\gq^*(\cA^{\lambda}_G\ast K))$ at this point vanishes (our argument will be compatible with field extensions as in the previous sections).

Let $\cX_{\cF'_M}$ be the stack classifying exact sequences on $X$
\begin{equation}
\label{sequence_one_for_2.3.2}
0\to (V/V')_{\cF'_M}\to ?\to \cO_X\to 0
\end{equation}  
Let $\ev_{s'}: \cX_{\cF'_M}\to\A^1$ be the map sending (\ref{sequence_one_for_2.3.2}) to its pairing with $s'$.   

 Denote by $\Bun_U^{\cF'_M}$ the stack classifying a $P$-torsor $\cF'_P$ on $X$ and an isomorphism of $M$-torsors $\gamma': \cF'_P\times_P M\,\iso\,\cF'_M$ on $X$. 
 
 Recall the map $\act: \cX_P\to\Bun_{P/V'}$ defined in Section~\ref{Sect_1.2.6}. Let $\cX_1$ be the stack classifying an exact sequence (\ref{sequence_one_for_2.3.2}), for which we get the the $P/V'$-torsor $\cF'_{P/V'}=\act(\cF^1, (\ref{sequence_one_for_2.3.2}))$ on $X$, and a lifting of $\cF'_{P/V'}$ to a $P$-torsor $\cF'_P$.  Note that $\cF'_P$ is equipped with $\gamma': \cF'_P\times_P M\,\iso\, \cF'_M$. 
  
 Write $\act_1: \cX_1\to \Bun_U^{\cF'_M}$ for the map sending the above point to $(\cF'_P, \gamma')$. Let $\ev_1: \cX_1\to\Ga$ be the map sending the above point to the pairing of 
(\ref{sequence_one_for_2.3.2}) with $s'$. 

 In fact, $\cX_1$ classifies a $P$-torsor $\cF'_P$ with an isomorphism 
$$
\cF'_P\times_P (P/V)\,\iso\, \cF^1\times_{P/V'} (P/V)
$$
of $P/V$-torsors. 

 Write $\Bun_{U/V}^{\cF'_M}$ for the stack classifying a $P/V$-torsor $\cF'_{P/V}$ together with an isomorphism $\cF'_{P/V}\times_{P/V} M\,\iso\cF'_M$. So, $\Bun_{U/V}^{\cF'_M}$ classifies exact sequences 
$$
0\to (U/V)_{\cF'_M}\to ?\to \cO_X\to 0
$$ 
on $X$. We get a cartesian square
$$
\begin{array}{ccc}
\cX_1 & \toup{\act_1} & \Bun_U^{\cF'_M}\\
\downarrow &&\downarrow\\
\Spec k & \toup{\cF^1_{P/V}} & \Bun_{U/V}^{\cF'_M},
\end{array}
$$
where $\cF^1_{P/V}=\cF^1\times_{P/V'} (P/V)$.

\sssec{} Set
$$
\cE=(\act_1)_!\ev_1^*\cL_{\psi}
$$         

 Let $q': \Bun_U^{\cF'_M}\to\Bun_G$ be the natural map. For $p>0$ it suffices to show that 
\begin{equation}
\label{integral_for_Sect_2.3.2}
\RG_c(\Bun_U^{\cF'_M}, \cE\otimes (q')^*(\cA^{\lambda}_G\ast K))=0
\end{equation}

\sssec{} The complex (\ref{integral_for_Sect_2.3.2}) identifies canonically with
$$
\RG_c(\Bun_G, (\cA^{\lambda}_G\ast K)\otimes q'_!\cE)
$$
Now as in Section~\ref{Sect_3.1.8_for_version2}, one identifies (\ref{integral_for_Sect_2.3.2}) canonically with
\begin{equation}
\label{complex_for_Sect_5.1.3_v2}
\RG_c(\Bun_G, K\otimes ((q'_!\cE)\ast \cA^{\lambda}_G))
\end{equation}

\sssec{} Let the stacks $\Bunt_U^{\cF'_M}$, $_{y,\infty}\Bunt_U^{\cF'_M}$ be as in Section~\ref{Sect_4.1.6_for_v2}. As in Section~\ref{Sect_4.1.8_for_v2}, we have the maps 
$$
j: \Bun_U^{\cF'_M}\to {_{y,\infty}\Bunt_U^{\cF'_M}}, \;\;\;\; \bar q': {_{y,\infty}\Bunt_U^{\cF'_M}}\to \Bun_G,
$$ 
and (\ref{complex_for_Sect_5.1.3_v2}) identifies canonically with
\begin{equation}
\label{complex_for_Sect_5.1.4_v2}
\RG_c(\Bun_G, K\otimes \bar q'_!((j_!\cE)\ast \cA^{\lambda}_G))\,\iso\,\RG_c({_{y,\infty}\Bunt_U^{\cF'_M}}, (\bar q')^*K\otimes((j_!\cE)\ast \cA^{\lambda}_G))
\end{equation}

\sssec{} Recall the stratification of ${_{y,\infty}\Bunt_U^{\cF'_M}}$ by locally closed substacks $_{y,\theta}\Bunt_U^{\cF'_M}$ for $\theta\in\Lambda_{G,P}$ from Section~\ref{Sect_4.1.9_for_v2}. We claim that the contribution of each stratum $_{y,\theta}\Bunt_U^{\cF'_M}$ to (\ref{complex_for_Sect_5.1.4_v2}) vanishes.

 By construction, the $*$-restriction of $(j_!\cE)\ast \cA^{\lambda}_G$ to $_{y,\theta}\Bunt_U^{\cF'_M}$ is the extension by zero from $_{y, \theta}\Bun_U^{\cF'_M}$. So, we must show that for each $\theta\in\Lambda_{G,P}$,
$$
\RG_c({_{y, \theta}\Bun_U^{\cF'_M}}, i_{\theta}^*(\bar q')^*K\otimes i_{\theta}^*((j_!\cE)\ast \cA^{\lambda}_G))=0
$$  
 
  Recall the map $q_{\theta}: {_{y, \theta}\Bun_U^{\cF'_M}}\to \Gr_M^{\theta, -}(\cF'_M)$ from Section~\ref{Sect_4.1.12_for_v2}. It suffices to show that
\begin{equation}
\label{complex_for_Sect_5.1.6_for_v2}
(q_{\theta})_!i_{\theta}^*\left((\bar q')^*K\otimes ((j_!\cE)\ast \cA^{\lambda}_G)\right)=0 
\end{equation}

 This will be done in Section~\ref{Sect_End_proof_x_odd_v2} after introducing a suitable version of the $W$-category.

 

\ssec{Definition of the $W$-category}
\label{Sect_4.2}

\sssec{} Define the full $\DG$-subcategory $Shv^W(_{y,\infty}\Bunt_U^{\cF'_M})\subset Shv(_{y,\infty}\Bunt_U^{\cF'_M})$ attached to $s'$ as in Section~\ref{Sect_3.2} with some changes. Here are the details.
 
As in Section~\ref{Sect_4.2.2_now}, for a collection of pairwise distinct closed points $\bar z=\{z_1,\ldots, z_m\}\subset X-y$ we have the same objects 
$$
(V/V')^{reg}_{\bar z}, \;\;\;\;\; (V/V')^{mer}_{\bar z}\!, \;\;\;\;\; \chi_{\bar z}: (V/V')^{mer}_{\bar z}\to\Ga
$$ 
and
$$
\bar\chi_{\bar z}: \Gr_{(V/V')_{\cF'_M}, \bar z}\to \Ga
$$
The open substack 
$$
_{y,\infty}\Bunt_{U, \,{\rm{good\; at}}\; \bar z}^{\cF'_M}\subset {_{y,\infty}\Bunt_U^{\cF'_M}}
$$
is defined as in Section~\ref{Sect_3.2}.

\sssec{} The definition of the groupoid $\cG_{\bar z}$ is changed compared to Section~\ref{Sect_4.2.4_now} as follows.

 Let $\cG_{\bar z}$ be the ind-stack classifying two points of $_{y,\infty}\Bunt_{U, \,{\rm{good\; at}}\; \bar z}^{\cF'_M}$, namely $(\cF_G, \kappa^{\cV})$ and $(\cF'_G, \kappa'^{\cV})$, and an isomorphism $\beta_{\bar z}: \cF_G\,\iso\cF'_G\mid_{X-\cup_i z_i}$ subject to the following conditions.

 First, for any finite-dimensional $G$-module $\cV$ the diagram commutes
$$
\begin{array}{ccc}
(\cV^U)_{\cF'_M} & \toup{\kappa^{\cV}} & \cV_{\cF_G}(\infty y)\\
& \searrow\lefteqn{\scriptstyle \kappa'^{\cV}} & \downarrow\lefteqn{\scriptstyle \beta_{\bar z}}\\
&& \cV_{\cF_G}(\infty y)
\end{array}
$$
So, for a point of $\cG_{\bar z}$ we get two $P$-torsors $\cF_P$ and $\cF'_P$ on $D_{\bar z}$, and $\beta_{\bar z}$ gives rise to an isomorphism $\beta_{P,\bar z}: \cF_P\,\iso\, \cF'_P\mid_{D^*_{\bar z}}$ making the diagram commute
$$
\begin{array}{ccc}
\cF_P\times_P M &\toup{\gamma} & \cF'_M\\
\downarrow\lefteqn{\scriptstyle \bar\beta_{P,\bar z}}  & 
\nearrow\lefteqn{\scriptstyle \gamma'}\\
\cF'_P\times_P M
\end{array}
$$ 
Here $\bar\beta_{P,\bar z}$ is the extension of scalars of $\beta_{P, \bar z}$ under $P\to M$.
So, $\bar\beta_{P,\bar z}$ extends to a regular map over $D_{\bar z}$. 

 Second, we require that the extension of scalars
$$
\cF_P\times_P P/V\to \cF'_P\times_P P/V
$$
of $\beta_{P,\bar z}$ initially defined on $D_{\bar z}^*$ extends to an isomorphism of $P/V$-torsors over $D_{\bar z}$ denoted
$$
\beta_{P/V,\,\bar z}: \cF_P\times_P P/V\,\iso\, \cF'_P\times_P P/V\mid_{D_{\bar z}}
$$

\sssec{} For a point of $\cG_{\bar z}$ let $\cN$ be the sheaf of isomorphisms $\cF_P\to \cF'_P$ of  $P$-torsors over $D_{\bar z}$ compatible with $\beta_{P/V,\, \bar z}$. The sheaf of automorphisms of $\cF_P$ on $D_{\bar z}$ acting trivially on $\cF_P\times_P P/V$ is $V_{\cF_P}$ over $D_{\bar z}$. This sheaf acts on $\cN$ via its action on $\cF_P$. This way $\cN$ becomes a torsor under $V_{\cF_P}$ over $D_{\bar z}$ together with a trivialization $\beta_{\cN}$ over $D^*_{\bar z}$. Namely, $\beta_{P,\bar z}$ gives the corresponding trivialization. The extension of scalars of $(\cN, \beta_{\cN})$ via $V\to V/V'$ gives a morphism 
$$
\cG_{\bar z}\to \Gr_{(V/V')_{\cF'_M}, \bar z}
$$
Define $\chi_{\cG}: \cG_{\bar z}\to\Ga$ as the composition
$$
\cG_{\bar z}\to \Gr_{(V/[V,V])_{\cF'_M}, \bar z}\;\toup{\bar\chi_{\bar z}\;\,} \Ga
$$ 

\sssec{} As in Section~\ref{Sect_4.2.6_now}, we have a diagram of projections 
$$
{_{y,\infty}\Bunt_{U, \,{\rm{good\; at}}\; \bar z}^{\cF'_M}}\;\getsup{h^{\la}_{\cG}}\;
 \cG_{\bar z}\;\toup{h^{\ra}_{\cG}}\; {_{y,\infty}\Bunt_{U, \,{\rm{good\; at}}\; \bar z}^{\cF'_M}}
$$
Here $h^{\ra}_{\cG}$ (resp., $h^{\la}_{\cG}$) sends the above point to $(\cF'_G, \kappa'^{\cV})$ (resp., to $(\cF_G, \kappa^{\cV})$). This way $\cG_{\bar z}$ gets a structure of a groupoid over $_{y,\infty}\Bunt_{U, \,{\rm{good\; at}}\; \bar z}^{\cF'_M}$.
 
\sssec{} The rest of the definition of $W$-categories goes through without changes. One first defines
$$
Shv^{W, constr, \heartsuit}_{{\rm{good\; at}}\; \bar z}\subset (Shv(_{y,\infty}\Bunt_{U, \,{\rm{good\; at}}\; \bar z}^{\cF'_M})^{constr})^{\heartsuit}
$$ 
as in Section~\ref{Sect_4.2.7_now}. Then Definitions~\ref{Def_3.2.6}, \ref{Def_3.2.7}, \ref{Def_3.2.8} apply in our situation of $x$ odd giving rise to the categories 
$$
Shv^W(_{y,\infty}\Bunt_U^{\cF'_M}),\;\; Shv^W(_{y,\infty}\Bunt_U^{\cF'_M})^{constr}
$$
with the same formal properties. 
 
  The analog of Proposition~\ref{Pp_3.2.9} holds for $x$ odd with the same proof.
 
\sssec{} Let $\eta=(\cF_M, \beta_M)\in \Gr_M(\cF'_M)$, here $\cF_M$ denotes a $M$-torsor on $X$, and $\beta_M: \cF_M\,\iso\,\cF'_M\mid_{X-y}$ is an isomorphism. 
 
  The notion of positive $s'$-conductor for $\eta$ is defined as in Section~\ref{Sect_3.1.11}. If $\eta$ has a positive $s'$-conductor then one gets a morphism $s: (V/V')_{\cF_M}\to\Omega$ on $X$.  
  
  Recall that $\Bun_U^{\cF_M}$ classifies a $P$-torsor $\cF_P$ together with an isomorphism $\gamma: \cF_P\,\iso\,\cF_M$. We get a natural map $i_{\cF_M}: \Bun_U^{\cF_M}\to {_{y,\infty}\Bunt_U^{\cF'_M}}$, which is equivariant under the above groupoids. So, along the same lines one defines the versions $Shv^W(\Bun_U^{\cF_M})$, $Shv^W(\Bun_U^{\cF_M})^{constr}$. Set 
$$
Shv^W(\Bun_U^{\cF_M})^{constr}=
Shv^W(\Bun_U^{\cF_M})\cap Shv(\Bun_U^{\cF_M})^{constr}
$$  
The functor $i_{\cF_M}^*$ restricts to
$$
i_{\cF_M}^*: Shv^W(_{y,\infty}\Bunt_U^{\cF'_M})\to Shv^W(\Bun_U^{\cF_M})
$$
and preserves the constructibility.   

\sssec{} Let $\Bun_{U/V}^{\cF_M}$ be the stack classifying a $P/V$-torsor $\cF_{P/V}$ on $X$ together with a trivialization $\cF_{P/V}\times_{P/V} M\,\iso\, \cF_M$ on $X$. 
 
  Let $\cX_{\cF_M}$ be the stack classifying exact sequences
\begin{equation}
\label{ex_seq_for_Sect_4.2.5}
0\to (V/V')_{\cF_M}\to ?\to \cO_X\to 0
\end{equation}
on $X$. If $\eta$ has a positive $s'$-conductor then one gets a morphism $\ev_s: \cX_{\cF_M}\to\Ga$ given by the pairing of (\ref{ex_seq_for_Sect_4.2.5}) with $s$.

\sssec{} The natural map $\nu: \Bun_U^{\cF_M}\to \Bun_{U/V}^{\cF_M}$ is surjective on the level of $k$-points. Let $\cW^{\cF_M}$ be the stack classifying a point $(\cF_P,\gamma)\in \Bun_U^{\cF_M}$, and a torsor $\bar\cF$ under $V_{\cF_P}$ on $X$. 
We have a cartesian square
$$
\begin{array}{ccc}
\cW^{\cF_M} & \toup{\act_{\cF_M}} & \Bun_U^{\cF_M}\\
\downarrow\lefteqn{\scriptstyle \pr_1} && \downarrow\lefteqn{\scriptstyle \nu}\\
\Bun_U^{\cF_M} & \toup{\nu} & \Bun_{U/V}^{\cF_M}\\
\end{array}
$$
Here the action map $\act_{\cF_M}$ is defined similarly to the map $\act: \cX_P\to\Bun_P$ from Section~\ref{Sect_1.2.6}, and $\pr_1$ sends the collection $(\cF_P,\gamma, \bar\cF)$ to $(\cF_P, \gamma)$. 

 Let $r_{\cW}: \cW^{\cF_M}\to  \cX_{\cF_M}$ be the map sending a collection $(\cF_P,\gamma, \bar\cF)$ as above to the $(V/V')_{\cF_M}$-torsor $\bar\cF\times_{V_{\cF_P}} (V/V')_{\cF_M}$, which we view as the exact sequence (\ref{ex_seq_for_Sect_4.2.5}). 
 
\begin{Lm} 
\label{Lm_4.2.6}
If $\eta$ has a positive $s'$-conductor then  for any $\cK\in Shv^W(\Bun_U^{\cF_M})^{constr}$ we have an isomorphism
$$
\act_{\cF_M}^*\cK\,\iso\,\pr_1^*\cK\otimes r_{\cW}^*\ev_s^*\cL_{\psi}
$$
Otherwise, $Shv^W(\Bun_U^{\cF_M})^{constr}$ vanishes. 
\end{Lm}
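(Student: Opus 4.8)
The plan is to realise $\cW^{\cF_M}$, with its two projections $\pr_1$ and $\act_{\cF_M}$ to $\Bun_U^{\cF_M}$, as a ``global'' incarnation of the Whittaker groupoids $\cG_{\bar z}$ of Section~\ref{Sect_4.2}, and then to read off the asserted isomorphism from the defining $(\cG_{\bar z},\chi_{\cG}^*\cL_{\psi})$-equivariance of objects of $Shv^W(\Bun_U^{\cF_M})^{constr}$.

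\textbf{The ``otherwise'' clause.} If $\eta$ has no positive $s'$-conductor, then $Shv^W(\Bun_U^{\cF_M})^{constr}$ vanishes; this is the analogue, for the present stratum, of the vanishing clause of Lemma~\ref{Lm_3.2.12} and is proved exactly as there (compare (\cite{FGV}, Lemma~6.2.8)): via $i_{\cF_M}$ the stack $\Bun_U^{\cF_M}$ maps into the stratum ${}_{y,\theta}\Bunt_U^{\cF'_M}$ determined by the component of $\eta$, the failure of positivity of the conductor means precisely that this $\theta$ is ``too negative'', and there the twisted equivariance leaves no room for a nonzero constructible object. From now on assume $\eta$ has a positive $s'$-conductor, so that $s\colon(V/V')_{\cF_M}\to\Omega$, hence $\ev_s\colon\cX_{\cF_M}\to\Ga$, is defined.

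\textbf{The groupoid comparison.} From the displayed cartesian square, $\cW^{\cF_M}$ is of finite type and $\act_{\cF_M}$ is a base change of the smooth finite-type morphism $\nu\colon\Bun_U^{\cF_M}\to\Bun_{U/V}^{\cF_M}$; so $\act_{\cF_M}^*\cK$ and $\pr_1^*\cK\otimes r_{\cW}^*\ev_s^*\cL_{\psi}$ are constructible and it suffices to produce a \emph{canonical} isomorphism between them, which we may do after passing to a smooth affine atlas $S\to\cW^{\cF_M}$ — descent along $S$ being then automatic, both sides being pulled back from $\cW^{\cF_M}$ and the isomorphism being a canonical equivariance one. Fix $z_0\in X-y$ and set $\bar z=\{z_0\}$; over $S$ the tautological $V_{\cF_P}$-torsor $\bar\cF$ is trivial over $S\times(X-z_0)$ (which is affine), and the image of $S$ in $\Bun_U^{\cF_M}\times\Bun_U^{\cF_M}$ lies in the locus good at $z_0$, since via $i_{\cF_M}$ the stack $\Bun_U^{\cF_M}$ maps into the non-degenerate part of ${}_{y,\infty}\Bunt_U^{\cF'_M}$, which is good at every point of $X-y$. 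Sending $(\cF_P,\gamma,\bar\cF)$ to $\bigl(\cF_G=\act(\cF_P,\bar\cF)\times_PG,\ \cF'_G=\cF_P\times_PG,\ \beta_{\bar z}\bigr)$, with $\beta_{\bar z}$ induced by a trivialisation of $\bar\cF$ over $X-z_0$, defines a morphism $S\to\cG_{\bar z}$ over the good-at-$z_0$ locus. The two extra conditions in the $x$-odd definition of $\cG_{\bar z}$ hold tautologically here, because a $V_{\cF_P}$-modification alters neither the induced $M$-torsor nor the induced $P/V$-torsor, so $\bar\beta_{P,\bar z}$ and $\beta_{P/V,\bar z}$ are the canonical identifications; under this morphism $h^{\la}_{\cG}$ pulls back to $\act_{\cF_M}$, $h^{\ra}_{\cG}$ to $\pr_1$, and — since the sheaf $\cN$ attached to such a point is identified over $D_{z_0}$ with $\bar\cF$, $s$ and $s'$ agree on $X-y\ni z_0$, and $r_{\cW}(\cF_P,\gamma,\bar\cF)=\bar\cF\times_{V_{\cF_P}}(V/V')_{\cF_M}$ is again trivial off $z_0$ — the character $\chi_{\cG}$ pulls back to $\ev_s\comp r_{\cW}$.

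\textbf{Conclusion and the main difficulty.} Finally, $\cK\in Shv^W(\Bun_U^{\cF_M})^{constr}$ has all perverse cohomology sheaves in $Shv^W(\Bun_U^{\cF_M})^{\heartsuit}$, hence (being constructible) in $Shv^{W,constr,\heartsuit}$; restricting to the good-at-$z_0$ locus these are $(\cG_{\bar z},\chi_{\cG}^*\cL_{\psi})$-equivariant, and since the structure maps of $\cG_{\bar z}$ are smooth this equivariance is detected on perverse cohomology, so $\cK$ itself is $(\cG_{\bar z},\chi_{\cG}^*\cL_{\psi})$-equivariant. Pulling the canonical equivariance isomorphism $(h^{\la}_{\cG})^*\cK\,\iso\,(h^{\ra}_{\cG})^*\cK\otimes\chi_{\cG}^*\cL_{\psi}$ back along $S\to\cG_{\bar z}$ gives $\act_{\cF_M}^*\cK\,\iso\,\pr_1^*\cK\otimes r_{\cW}^*\ev_s^*\cL_{\psi}$ over $S$, and by descent this is the desired isomorphism over $\cW^{\cF_M}$. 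The main obstacle is this groupoid comparison: exhibiting $S\to\cG_{\bar z}$ so that it matches \emph{at once} source, target and character with $\act_{\cF_M}$, $\pr_1$ and $\ev_s\comp r_{\cW}$ — in particular verifying that the ``$\beta_{P/V,\bar z}$ extends regularly'' condition specific to the $x$-odd groupoid is automatic for $V$-torsor modifications — and checking that the resulting isomorphism is canonical enough to descend; the ``otherwise'' clause and the finiteness bookkeeping are routine given the cited analogues.
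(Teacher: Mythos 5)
Your proposal is correct and matches what the paper intends: the paper's proof is literally the one-line citation ``This is analogous to (\cite{FGV}, Lemma~6.2.8)'', and your argument is a careful unpacking of that analogy in the present $x$-odd setup — realising the global groupoid $\cW^{\cF_M}$ inside the local groupoids $\cG_{\{z_0\}}$ via a choice of trivialisation of $\bar\cF$ away from $z_0$, observing that the $x$-odd-specific regularity condition on $\beta_{P/V,\bar z}$ is automatic for $V$-torsor modifications, and matching the character $\chi_{\cG}$ with $\ev_s\comp r_{\cW}$ via the residue theorem.
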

\begin{proof}
This is analogous to (\cite{FGV}, Lemma~6.2.8). 
\end{proof}   

\ssec{End of the proof of Theorem~\ref{Th_preservation_Hecke} for $x$ odd}
\label{Sect_End_proof_x_odd_v2}

\sssec{} Recall that we fixed $\theta\in\Lambda_{G,P}$. Pick a $k$-point $\eta=(\cF_M, \beta_M)\in \Gr_M^{\theta, -}(\cF'_M)$. The fibre of $q_{\theta}$ over $\eta$ identifies with $\Bun_U^{\cF_M}$. As in Section~\ref{Sect_3.1.11}, we get the closed immersion (\ref{inclusion_iota_eta_for_Sect_4.1.13_v2}). It suffices to show that the fibre of (\ref{complex_for_Sect_5.1.6_for_v2}) at $\eta$ vanishes.

\sssec{} Note that $\cE\in Shv^W(\Bun_U^{\cF'_M})$. Set
$$
\cK=\iota_{\eta}^*i_{\theta}^*((j_!\cE)\ast \cA^{\lambda}_G) 
$$
Let $q: \Bun_U^{\cF_M}\to\Bun_G$ be the natura map. It remains to show that
$$
\RG_c(\Bun_U^{\cF_M}, q^*K\otimes \cK)=0
$$
By functoriality, $\cK\in Shv^W(\Bun_U^{\cF_M})^{constr}$. By Lemma~\ref{Lm_4.2.6}, we may and do assume that $\eta$ has a positive $s'$-conductor. It suffices to show that 
$$
\nu_!(q^*K\otimes \cK)=0,
$$
To do so, we will show that $\nu^*\nu_!(q^*K\otimes \cK)=0$. 
 
\sssec{} Pick a $k$-point $(\cF_P^2,\gamma)$ of $\Bun_U^{\cF_M}$. We check that the $*$-fibre of $\nu^*\nu_!(q^*K\otimes \cK)$ at $(\cF_P^2, \gamma)$ vanishes. Let $\cF^2_{P/V'}=\cF_P^2\times_P P/V'$. Since at the generic point of $X$ we have $s=s'$, we get $(\cF_{P/V'}^2, s)\in\cY^0_P$. Our claim follows from Lemma~\ref{Lm_4.2.6} and the fact that $K\in\cF_{\OO}$. Namely, the fibre at $(\cF^2_P,\gamma)$ of 
$$
(\pr_1)_!(r_{\cW}^*\ev_s^*\cL_{\psi}\otimes\act_{\cW}^*q^*K)
$$ 
vanishes by definition of $\cF_{\OO}$.
\QED

\end{document}